\theoremstyle{plain}
\newtheorem{theorem}{Theorem}
\numberwithin{theorem}{section}
\newtheorem{corollary}{Corollary}
\numberwithin{corollary}{section}
\newtheorem{definition}{Definition}
\numberwithin{definition}{section}
\newtheorem{lemma}{Lemma}
\numberwithin{lemma}{section}
\numberwithin{proposition}{section}
\newtheorem{remark}{Remark}
\numberwithin{remark}{section}
\numberwithin{example}{section}
\numberwithin{equation}{section}
\newcommand {\be}{\begin{equation}}
\newcommand {\ee}{\end{equation}}
\newcommand{\h}{\begin{eqnarray*}}
\newcommand{\e}{\end{eqnarray*}}
\newcommand{\CC}{\mathbb{C}}
\newcommand{\RR}{\mathbb{R}}
\newcommand{\ZZ}{\mathbb{Z}}
\newcommand{\QQ}{\mathbb{Q}}
\newcommand{\ch}{\mathrm{ch}}
\newcommand{\ii}{\sqrt{-1}}
\newcommand{\ind}{\mathrm{Ind}}
\newcommand{\reta}{\overline{\eta}}
\newcommand{\LL}{\mathcal{L}}
\begin{document}

\title[Cubic forms, anomaly cancellation and modularity]{Cubic forms, anomaly cancellation and modularity}
\author{Fei Han}
\address{Fei Han, Department of Mathematics, National University of Singapore,
 Block S17, 10 Lower Kent Ridge Road,
Singapore 119076 (mathanf@nus.edu.sg)}
 \author{Ruizhi Huang}
\address{Ruizhi Huang, Institute of Mathematics and Systems Sciences, Chinese Academy of Sciences, Beijing 100190, China}
\email{huangrz@amss.ac.cn}  
\urladdr{https://sites.google.com/site/hrzsea}
\author{Kefeng Liu}
\address{Kefeng Liu, Department of Mathematics, University of California at Los Angeles,
Los Angeles, CA 90095, USA (liu@math.ucla.edu) and Mathematical Science Research Center, Chongqing University of Technology, Chongqing, 400054, China}
\author{Weiping Zhang}
\address{Weiping Zhang, Chern Institute of Mathematics \& LPMC, Nankai
University, Tianjin 300071, China. (weiping@nankai.edu.cn)}
\maketitle

\begin{abstract} 
Recently Freed and Hopkins \cite{FH19} proved that there is no parity anomaly in M-theory on pin$^+$ manifolds in the low-energy field theory approximation, and they also developed an algebraic theory of cubic forms.
Earlier Witten \cite{W97} proved the anomaly cancellation for spin manifolds by introducing the $E_8$-bundle technique.
Motivated by the cubic forms and the anomaly cancellation formulas of Witten-Freed-Hopkins, we give some new cubic forms on spin, spin$^c$, spin$^{\omega_2}$ and orientable 12-manifolds respectively. We relate them to $\eta$-invariants when the manifolds are with boundary, and mod 2 indices on 10 dimensional characteristic submanifolds when the manifolds are spin$^c$ or spin$^{\omega_2}$. Our method of producing these cubic forms is a combination of (generalized) Witten classes and the character of the basic representation of affine $E_8$.

\end{abstract} 

\tableofcontents

\section*{Introduction} 

\subsection{Background} \label{background}

M-theory is an 11-dimensional theory that unifies string theories and is the best candidate for a theory of quantum gravity \cite{W95, Town96}. The theory has very rich structures both in physics and mathematics. In particular, the anomalies and their cancellations in the theory are related to profound aspects in geometry and topology. In the recent paper \cite{FH19}, Freed and Hopkins proved that there is no parity anomaly in M-theory on pin$^+$ manifolds (manifolds not necessarily orientable with the second Stiefel-Whitney class $\omega_2=0$) in the low-energy field theory approximation, which shows the consistency of the time-reversal symmetric theory. More precisely, Freed and Hopkins proved that the anomaly arising from the Rarita-Schwinger field and the anomaly arising from the ``Chern-Simons coupling" of the $C$-field cancel. Earlier, the anomaly cancellation for spin manifolds case was discovered by Witten \cite{W97}. 

We first briefly recap the Freed-Hopkins' anomaly cancellation in the pin$^+$ case \cite{FH19} and Witten's anomaly cancellation in the spin case \cite{W97}. Anomaly of an 11-dimensional theory is an invertible 12-dimensional theory. Let $W$ be a 12-dimensional pin$^+$ manifold. In \cite{FH19}, Freed and Hopkins computed the partition function of the Rarita-Schwinger anomaly theory, which is equal to
$$\hat\alpha_{RS}(W)=\exp\left(2\pi \sqrt{-1}\,\frac{\eta(TW-2)}{4}\right),$$
where $\eta(TW-2)$ is the difference of the $\eta$-invariant of the Rarita-Schwinger operator and twice the $\eta$-invariant of the pure Dirac operator. They showed that this partition function is a root of unity, independent of the metrics on $W$ and a pin$^+$ bordism invariant. This consequently shows that the 
Rarita-Schwinger partition function factors through a homomorphism
$$\hat\alpha_{RS}: \pi_{12} MTPin^+\to \CC^\times, $$
where $MTPin^+$ is the Thom spectra of pin$^+$-manifolds
and then determines an invertible unitary topological field theory
$$\alpha_{RS}: MTPin^+\to\Sigma^{12}I \CC^\times$$
with $I \CC^\times$ being the character dual to the sphere spectrum. On the other hand, to handle the $C$-field, Freed and Hopkins introduced a new topological structure, namely the $\mathfrak{m}_c$ structure, a pin$^+$-structure together with a $\omega_1$-twisted integer lift of the fourth Stiefel-Whitney class $\omega_4$. The anomaly of the $C$-field is
$$ \hat\alpha_C(W)=\exp\left(2\pi \sqrt{-1}\,\frac{\tilde c^3-\bar p(TW)\tilde c}{48}\right),$$
where $\bar p(TW)$ is a degree 4 canonical class of the pin$^+$ structure and $\tilde c$ is a $\omega_1$-twisted integer lift
of $\omega_4(TW)$. They showed that this factors through a homomorphism 
$$\hat\alpha_{C}: \pi_{12} M\mathfrak{m}_c\to \CC^\times,$$ 
where $M\mathfrak{m}_c$ is the Thom spectra of $\mathfrak{m}_c$-manifolds, and then determines an invertible topological field theory
$$\alpha_{C}: M\mathfrak{m}_c\to\Sigma^{12}I \CC^\times.$$
The following theorem shows that $M$-theory is anomaly free. 
\begin{theorem}[\protect Freed-Hopkins, \cite{FH19}] The total anomaly theory $\alpha_{RS}\otimes \alpha_{C}$ is trivializable. 
\end{theorem}
Freed and Hopkins proved this theorem by determining the generators for the 12-dimensional bordism group of $\mathfrak{m}_c$-manifolds after $2$-adic completion and verifying that $\hat\alpha_{RS}\cdot \hat\alpha_{C}=1$ on those generators. 

The anomaly cancellation under the assumption that $W$ is spin was proved by Witten in \cite{W97}. When $W$ is spin, the partition function of Rarita-Schwinger anomaly theory can be expressed via characteristic numbers of $W$ (Proposition 3.5 in \cite{FH19})
$$ \hat\alpha_{RS}(W)=\exp\left(2\pi \sqrt{-1}\left\langle\frac{\hat A(TW)\ch(T_\CC W-2)}{4}, [W]\right\rangle\right). $$
Let $\lambda=\lambda(W)$ be the first spin class of $TW$ in $H^4(W; \ZZ)$ and $x\in H^4(W; \ZZ)$. Let $c=C(x)=\lambda+2x$. The anomaly of the $C$-field is
$$ \hat\alpha_C(W)=\exp\left(2\pi \sqrt{-1}\,\left\langle\frac{ c^3-p(TW)c}{48}, [W]\right\rangle\right),$$
where $p(TW)$ be the second spin class of $TW$ in $H^8(W; \ZZ)$.
The anomaly cancellation $\hat\alpha_{RS}(W) \hat\alpha_C(W)=1$ is equivalent to the integrality of the following characteristic number,
$$ \left\langle\frac{ c^3-p(TW)c}{48}+\frac{1}{4}\hat A(TW)\ch(T_\CC W-2), [W]\right\rangle.$$ By the Atiyah-Hirzebruch divisibility, $\langle \hat A(TW), [W]\rangle$ is even. Therefore the anomaly cancellation is further equivalent to the integrailty of 
$$ \left\langle\frac{ c^3-p(TW)c}{48}+\frac{1}{4}\hat A(TW)\ch(T_\CC W-4), [W]\right\rangle.$$ Witten showed that $x$ determines an $E_8$-bundle $V_\CC(x)$ on $W$ and the above characteristic number is equal to minus half of the index of the Dirac operator $D_W$ on $W$ coupled with $V_\CC (x)$ by proving the following amazing equality through computation
\be \label{WFH1} \left\langle\frac{ c^3-p(TW)c}{48}+\frac{1}{2}\hat A(TW)\ch (V_\CC (x))+\frac{1}{4}\hat A(T	W)\ch(T_\CC W-4), [W]\right\rangle=0. \ee Then the desired integrality comes from the Atiyah-Hirzebruch divisibility on the eveness of $\ind (D_W^{V_\CC(x)})$.

Motivated by the $C$-field anomaly, Freed and Hopkins developed an algebraic theory of cubic forms in \cite{FH19}. We also briefly review their theory here. Let $L$ be a finitely generated free abelian group. Let 
\be \langle \cdot, \cdot, \cdot\rangle: L\times L \times L \to \ZZ \ee
be a symmetric trilinear form on $L$. For convenience, write the trilinear form simply as a product. $a \in L$ is called a {\em characteristic element} if the $\overline a\in L\otimes \ZZ/2\ZZ$ satisfies the following identity
\be \label{char} {\overline a} \, {\overline x} \, {\overline y} \equiv {\overline x}\, {\overline x}\, {\overline y}+{\overline x}\, {\overline y}\,{\overline y}\ \ \ (\!\!\!\!\!\!\mod\, 2)\ \  \ \ \  \ee 
for any $\overline x$, $\overline y\in L\otimes \ZZ/2\ZZ$.
Let $L_{\mathrm{char}}\subset L$ be the torsor of characteristic elements in $L$. Let $L^*=\mathrm{Hom} (L; \ZZ).$ 
\begin{theorem}[\protect Freed-Hopkins, Lemma 4.1 in \cite{FH19}] \label{FH-mod 24} Let $a\in L_{\mathrm{char}}$ and $\widehat a$ be the mod 24 reduction. There exists a unique $\widehat b\in L^*\otimes \ZZ/24\ZZ$ such that 
\be \label{mod24} \widehat b(\widehat x)=4\widehat x^3+6\widehat a\widehat x^2+3\widehat a^2\widehat x \ \ \ \ \ \ (\!\!\!\!\!\! \mod\, 24)\ee
for all $\widehat x\in L\otimes \ZZ/ 24 \ZZ.$
\end{theorem}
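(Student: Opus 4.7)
The plan is to verify that the cubic polynomial
\begin{equation*}
f(x) := 4x^3 + 6ax^2 + 3a^2 x
\end{equation*}
descends, upon reduction modulo $24$, to a \emph{linear} function $\widehat{f}\colon L\to\ZZ/24\ZZ$. Once this is established, $\widehat f$ corresponds to a unique element $\widehat{b}\in\mathrm{Hom}(L,\ZZ/24\ZZ)\cong L^*\otimes\ZZ/24\ZZ$ satisfying (\ref{mod24}), and uniqueness is automatic since such a homomorphism is pinned down by its values on $L$.

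The first step is to compute the deviation from additivity. A direct expansion of $f(x+y)-f(x)-f(y)$ causes all purely cubic and linear terms to cancel and collapses the cross terms to
\begin{equation*}
f(x+y) - f(x) - f(y) = 12\,x^2y + 12\,xy^2 + 12\,axy = 12\,xy(x+y+a).
\end{equation*}
Thus everything is controlled by the single integer $xy(x+y+a)$ and its parity.

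The second step is to show $xy(x+y+a)\equiv 0\pmod 2$. Expanding gives $x^2y+xy^2+axy$; reducing modulo $2$ and applying the characteristic condition (\ref{char}) (which says precisely $\overline{a}\,\overline{x}\,\overline{y}\equiv\overline{x}^{\,2}\overline{y}+\overline{x}\,\overline{y}^{\,2}\pmod 2$) yields
\begin{equation*}
\overline{x}^{\,2}\overline{y}+\overline{x}\,\overline{y}^{\,2}+\overline{a}\,\overline{x}\,\overline{y} \equiv 2\bigl(\overline{x}^{\,2}\overline{y}+\overline{x}\,\overline{y}^{\,2}\bigr)\equiv 0 \pmod 2.
\end{equation*}
Hence $12\,xy(x+y+a)\equiv 0\pmod{24}$, so $\widehat f\colon L\to\ZZ/24\ZZ$ is additive. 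Since $L$ is finitely generated free abelian, this homomorphism is represented by the desired $\widehat b\in L^*\otimes\ZZ/24\ZZ$.

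I do not anticipate any serious obstacle. The only substantive input is the characteristic condition on $a$, which enters exactly once and exactly in the form stated. The one point requiring care is the bookkeeping between the parity statement in $L\otimes\ZZ/2\ZZ$ and the congruence in $\ZZ/24\ZZ$: the factor of $12$ produced by the trinomial expansion is precisely matched by the factor of $2$ that the characteristic identity saves, and it is this numerical coincidence that makes $24$ (rather than $12$) the correct modulus in the theorem.
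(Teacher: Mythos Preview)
Your proof is correct. Note, however, that the paper does not actually give its own proof of this statement: it is quoted from Freed--Hopkins \cite{FH19} (Lemma~4.13 there) and used as background. So there is no in-paper argument to compare against. Your computation that
\[
f(x+y)-f(x)-f(y)=12\,xy(x+y+a)
\]
and the use of the characteristic condition (\ref{char}) to kill the parity of $xy(x+y+a)$ is exactly the standard argument and is complete. The only small point you left implicit is why the equation, once established for $x\in L$, holds for all $\widehat x\in L\otimes\ZZ/24\ZZ$: both sides of (\ref{mod24}) are visibly well defined on $L\otimes\ZZ/24\ZZ$, and the reduction map $L\to L\otimes\ZZ/24\ZZ$ is surjective, so agreement on $L$ suffices.
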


Let $h(x)$ be a degree 3 polynomial whose highest term is $\frac{1}{6}x^3$. Then the following algebraic identity shows that $h$ is a cubic refinement of the trilinear form $xyz$:
\be \label{cubicrefine} xyz=h(x+y+z)-h(x+y)-h(x+z)-h(y+z)+ h(x)+h(y)+h(z)-h(0). \ee

Let $a\in L, b\in L^*$. Consider the Witten-Freed-Hopkins polynomial on $L$:
\be f_{a, b}(x)=(a+x)^3-b(a+x).  \ee
It is easy to see that \be \label{mod48} \frac{f_{a, b}(2x)-f_{a, b}(0)}{48} \ee
is a degree 3 polynomial whose highest term is $\frac{1}{6}x^3$, and therefore a cubic refinement of $xyz$; moreover when $a$ is a characteristic element and $b$ satisfies (\ref{mod24}), 
\be\frac{f_{a, b}(2x)-f_{a, b}(0)}{48}\in \ZZ.\ee

$\, $

Back to the anomaly cancellation in the spin case, $L=H^4(W; \ZZ)$ with the symmetric trilinear form of the intersection pairings gives a geometric model for the Freed-Hopkins' algebraic theory of cubic forms. They showed that the spin class $\lambda(W)$ is a characteristic element and the formula (\ref{WFH1}) tells us that the spin characteristic classes $a=\lambda(W), b=p(W)$ solve the mod 24 equation (\ref{mod24}). Moreover 
from (\ref{cubicrefine}), we can see that the trilinear form of the cup product
$$\langle x\cup y\cup z, [W]\rangle$$ for $x, y, z\in H^4(W; \ZZ)$ has an integral cubic refinement. 

The first purpose of this paper is to show that the amazing equality (\ref{WFH1}) for spin manifolds can be obtained by the modularity of a modular form, called {\em twisted Witten class}, inspired by the theory of elliptic genus \cite{O87, LS88, W, Lan, HBJ, Liu1, Gri, Hop}. 

Moreover, this modular method can be generalized to spin$^c$ case and allows us to obtain a spin$^c$ version of (\ref{WFH1}) with a new spin$^c$ cubic form. Consequently, using index theorem for spin$^c$ Dirac operators, we can find spin$^c$ classes $a=\lambda_c, b=p_c$ on spin$^c$ manifolds, that solve the weakened (mod 12) congruence equation (\ref{mod24}) and see that on 12 dimensional spin$^c$ manifolds $W$, $$2\langle x\cup y\cup z, [W]\rangle$$ has an integral cubic refinement. On the other hand, if we stick to the original spin cubic form, then it loses analytic interpretations as indices of twisted Dirac operators on $W$. Nevertheless, we find that the analytic interpretations can be rescued on spin$^c$ and spin$^{\omega_2}$ manifolds by applying Zhang's Rokhlin congruence formulas via mod 2 indices on 10 dimensional spin or pin$^-$ manifolds.

If we further weaken the assumption from spin$^c$ to be general orientable manifolds, the modular method still works. Actually we are able to obtain an orientable version of (\ref{WFH1}) with an orientable cubic form. Consequently, using index theorem for twisted signature operators, we find characteristic classes $a, b$ on orientable manifolds, that solve the weakened (mod 3) congruence equation (\ref{mod24}) and see that on 12 dimensional orientable manifolds $W$, $$8\langle x\cup y\cup z, [W]\rangle$$ has an integral cubic refinement.

In all the cases, the Witten-Freed-Hopkins type formulas that we have obtained like (0.1) from the modular method are local, i.e., they hold on the level of differential forms. This allows us to use the Atiyah-Patodi-Singer index theorem to generalize them to manifolds with boundaries. 

In the following, let us give a more detailed account of the background and our work in various cases. 

\subsection{Spin case} \label{intro-spin}
Let $Z$ be a 12 dimensional smooth manifold. Denote the integral Pontrjagin classes and the Stiefel-Whitney classes of $Z$ by $p_i, \omega_i$ respectively. Let $x\in H^4(Z; \ZZ)$. Following Witten \cite{W97}, $x$ determines an isomorphism class of principal $E_8$ bundles on $Z$. Let $V(x)$ denote the real adjoint vector bundle associated to the principal $E_8$ bundle determined by the class $x$. Denote by $V_\CC(x)$ the complexification of $V(x)$. The Chern character of $V_\CC(x)$ is (c.f. (4.25) in \cite{FH19})
\be \ch (V_\CC(x))=248-60x+6x^2-\frac{1}{3}x^3.\ee
So by the expression of the Chern character in terms of the Chern classes, it is easy to see that 
\be x=\frac{1}{60}c_2(V_\CC (x)). \ee

Suppose $Z$ is closed and oriented. Let $L=H^4(Z; \ZZ)$ and the trilinear form is the cup product of three elements in $L$ evaluated on the fundamental class $[Z]$. 

Further suppose $Z$ is spin. There is a canonical degree 4 class $\lambda\in H^4(Z; \ZZ)$ such that $2\lambda=p_1$ and $\lambda\equiv \omega_4\, (\!\!\!\!\mod2)$ (\cite{FH19}). The following theorem shows that $\lambda$ is a characteristic element.
\begin{theorem} [\protect Freed-Hopkins, Lemma 4.4 in \cite{FH19}] The Stiefel-Whitney class $\overline \lambda=\omega_4$ of a closed spin 12-manifold satisfies (\ref{char}) and thus $\lambda$ is a characteristic element. 
\end{theorem}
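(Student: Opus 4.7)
The plan is to translate the characteristic element condition \eqref{char} into a Wu formula identity on the closed spin $12$-manifold $Z$. Writing $\overline{x}, \overline{y}$ for the mod-$2$ reductions of integer classes $x, y \in L = H^4(Z;\ZZ)$ and using $\overline{\lambda} = w_4$, the assertion amounts to showing $\langle w_4\overline{x}\overline{y},[Z]_2\rangle \equiv \langle \overline{x}^2\overline{y} + \overline{x}\overline{y}^2,[Z]_2\rangle \pmod{2}$. The first input I would use is that on any closed spin manifold the Wu class $v_4$ equals $w_4$ (and $v_1 = v_2 = v_3 = 0$), forced by $w_1 = w_2 = 0$ and the Wu relations $w_k = \sum_i \mathrm{Sq}^i v_{k-i}$. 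Applying Wu's formula then gives $\langle w_4\overline{x}\overline{y},[Z]_2\rangle = \langle \mathrm{Sq}^4(\overline{x}\overline{y}),[Z]_2\rangle$.

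Next, I expand $\mathrm{Sq}^4(\overline{x}\overline{y})$ by the Cartan formula,
\be \mathrm{Sq}^4(\overline{x}\overline{y}) = \overline{x}^2\overline{y} + \overline{x}\overline{y}^2 + \mathrm{Sq}^1\overline{x}\cdot \mathrm{Sq}^3\overline{y} + \mathrm{Sq}^2\overline{x}\cdot \mathrm{Sq}^2\overline{y} + \mathrm{Sq}^3\overline{x}\cdot \mathrm{Sq}^1\overline{y}, \ee
reducing the task to showing that the three cross terms pair to zero with $[Z]_2$. The second key input is that $\overline{x}$ and $\overline{y}$, being mod-$2$ reductions of integer classes, have vanishing Bocksteins: $\mathrm{Sq}^1\overline{x} = 0$ and $\mathrm{Sq}^1\overline{y} = 0$. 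This immediately kills the first and last cross terms, even as classes in $H^{12}(Z;\ZZ/2\ZZ)$.

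The main technical step is the middle term $\mathrm{Sq}^2\overline{x}\cdot \mathrm{Sq}^2\overline{y}$. I would apply Wu's formula to $\overline{x}\cdot \mathrm{Sq}^2\overline{y} \in H^{10}(Z;\ZZ/2\ZZ)$: since $v_2 = 0$, the class $\mathrm{Sq}^2(\overline{x}\cdot \mathrm{Sq}^2\overline{y})$ vanishes in $H^{12}$. Expanding by Cartan together with the Adem relations $\mathrm{Sq}^1\mathrm{Sq}^2 = \mathrm{Sq}^3$ and $\mathrm{Sq}^2\mathrm{Sq}^2 = \mathrm{Sq}^3\mathrm{Sq}^1$ yields
\be 0 = \mathrm{Sq}^2\overline{x}\cdot \mathrm{Sq}^2\overline{y} + \mathrm{Sq}^1\overline{x}\cdot \mathrm{Sq}^3\overline{y} + \overline{x}\cdot \mathrm{Sq}^3\mathrm{Sq}^1\overline{y} \ee
in $H^{12}(Z;\ZZ/2\ZZ)$. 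Since $\mathrm{Sq}^1\overline{x} = 0$ and $\mathrm{Sq}^3\mathrm{Sq}^1\overline{y} = \mathrm{Sq}^3(\mathrm{Sq}^1\overline{y}) = 0$, the last two summands drop out, so $\mathrm{Sq}^2\overline{x}\cdot \mathrm{Sq}^2\overline{y} = 0$ in $H^{12}$. Combining these vanishings, $\mathrm{Sq}^4(\overline{x}\overline{y}) = \overline{x}^2\overline{y} + \overline{x}\overline{y}^2$ in $H^{12}(Z;\ZZ/2\ZZ)$, which is the desired identity. The hard part is this middle cross term: the simultaneous use of $v_2 = 0$ on the spin $12$-manifold, the Adem relation $\mathrm{Sq}^2\mathrm{Sq}^2 = \mathrm{Sq}^3\mathrm{Sq}^1$, and the Bockstein vanishing on integer-liftable classes is what makes the argument close.
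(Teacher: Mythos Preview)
Your proof is correct. The paper does not give its own proof of this statement; it simply cites it as Lemma~4.22 of Freed--Hopkins \cite{FH19}. Your argument via Wu classes is the standard one and is essentially what Freed--Hopkins do: on a spin manifold $v_1=v_2=v_3=0$ and $v_4=w_4$, so Wu's formula reduces the identity to $\mathrm{Sq}^4(\overline{x}\,\overline{y})=\overline{x}^2\overline{y}+\overline{x}\,\overline{y}^2$ after pairing with $[Z]_2$, and the Cartan expansion together with $\mathrm{Sq}^1\overline{x}=\mathrm{Sq}^1\overline{y}=0$ (integral lifts) and the Adem relation $\mathrm{Sq}^2\mathrm{Sq}^2=\mathrm{Sq}^3\mathrm{Sq}^1$ handles the cross terms. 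One tiny remark: when you say ``$\mathrm{Sq}^2(\overline{x}\cdot\mathrm{Sq}^2\overline{y})$ vanishes in $H^{12}$'' you are implicitly using that the pairing with $[Z]_2$ detects the top cohomology; strictly speaking Wu's formula only gives vanishing of the pairing, but since the characteristic element condition \eqref{char} is itself a statement about the trilinear pairing, this is all you need.
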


For $a=\lambda$, Freed and Hopkins showed that there is a characteristic class $p\in H^8(Z; \ZZ)$ such that $2p=p_2-\lambda^2$ and when $b=p$, (\ref{mod24}) is satisfied. This is deduced from the beautiful and important anomaly cancellation formula discovered by Witten-Freed-Hopkins (Theorem \ref{WFH-main}), which is proved by a direct computation, as well as the famous Atiyah-Hirzebruch divisibility on $8k+4$ dimensional spin manifolds. Adopting the above notations, one sees that on a closed spin smooth 12-manifold $Z$, 
\be \frac{f_{\lambda, p}(2x)}{48} \ee
is a half integer, which has analytic meaning as (quarter of) indices of certain twisted Dirac operators on $Z$ (Theorem \ref{WFH-main}). 

On the other hand, let $M$ be a $4m$ dimensional compact oriented smooth manifold. Let $$\{\pm
2\pi \sqrt{-1}z_{j},1\leq j\leq 2m\}$$ denote the formal Chern roots of $T_{%
\mathbb{C}}M $, the complexification of the tangent vector bundle $TM$ of $M$%
. The famous Witten genus of $M$ can be written as 
\begin{equation*}
W(M)=\left\langle \left( \prod_{j=1}^{2m}z_{j}\frac{\theta ^{\prime }(0,\tau
)}{\theta (z_{j},\tau )}\right) ,[M]\right\rangle \in \mathbb{Q}[[q]],
\end{equation*}%
with $\tau \in \mathbb{H}$, the upper half-plane, and $q=e^{2\pi \sqrt{-1}%
\tau }$. The Witten genus was first introduced in \cite{W} and can be viewed as the
loop space analogue of the $\widehat{A} $-genus. It can be expressed as a $q$%
-deformed $\widehat{A}$-genus as
\begin{equation*}
W(M)=\left\langle \widehat{A}(TM)\mathrm{ch}\left( \Theta \left(T_{\mathbb{C%
}}M\right) \right) ,[M]\right\rangle ,
\end{equation*}%
where
\begin{equation*}
\Theta (T_{\mathbb{C}}M)=\overset{\infty }{\underset{n=1}{\otimes }}%
S_{q^{n}}(\widetilde{T_{\mathbb{C}}M}),\ \ {\rm with}\ \
\widetilde{T_{\mathbb{C}}M}=T_{\mathbb{C}}M-{\mathbb C}^{4m},
\end{equation*}%
is the Witten bundle introduced in \cite{W}. When the manifold $M$ is spin, according to the
Atiyah-Singer index theorem, the Witten genus can be expressed analytically as the index of the twisted Dirac operator, $$W(M)=\ind (D\otimes \Theta \left( T_{\mathbb{C%
}}M\right))\in \mathbb{Z}[[q]],$$ where $D$ is the Atiyah-Singer spin Dirac operator on $M$. Moreover, if $M$ is string, i.e. $$\lambda=\frac{1}{2}p_{1}(TM)=0,$$ or even weaker, if $M$ is spin and the
first rational Pontrjagin class of $M$ vanishes, then $W(M)$ is a
modular form of weight $2m$ over $SL(2,\mathbb{Z})$ with integral
Fourier development (\cite{Za}). The homotopy theoretical
refinement of the Witten genus on string manifolds leads to the beautiful
theory of tmf ({\em topological modular form}) developed by Hopkins and
Miller \cite{Hop}. The string condition is the orientablity condition for this generalized cohomology theory.

If the string condition $\lambda=0$ does not hold, one constructs the cohomology class (c.f. \cite{Gri}), 
\be \mathcal{W}(TM)=e^{\frac{1}{24}E_2(\tau)\cdot p_1(TM)}\widehat{A}(TM)\mathrm{ch}\left( \Theta \!\left( T_{\mathbb{C%
}}M\right) \right)\in H^{4*}(M; \QQ)[[q]],\ee where $E_2(\tau)$ is the Eisenstein series of weight 2 (c.f. Chap 2.3 in \cite{BGHZ}). We call $\mathcal{W}(TM)$ the {\em Witten class} of $M$.

Let $P$ be a principal $E_8$ bundle over $M$. In Section \ref{proof of spinc-main}, we consider an associated element $\mathcal{V}\in K(M)[[q]]$ constructed from the basic representation of affine $E_8$. 
Let $P_1, P_2$ be two principal $E_8$ bundles with the corresponding $\mathcal{V}_1, \mathcal{V}_2$. 
Let $W_1, W_2$ be the complexified vector bundles associated to the adjoint representation of $E_8$. 
Denote
$$\varphi(\tau)=\prod_{n=1}^\infty (1-q^n).$$
We construct the {\em twisted Witten class} (specified to the case when $\xi$ is trivial and $c=0$ in (\ref{QP1P2}) for the spin case here for simplicity)
\be \mathcal{Q}(\mathcal{V}_1, \mathcal{V}_2):=e^{\frac{1}{24}E_2(\tau)\cdot\left(\frac{1}{30}(c_2(W_1)+c_2(W_2)\right)}\mathcal{W}(TM)\varphi(\tau)^{16}\mathrm{ch}(\mathcal{V}_1)\mathrm{ch}(\mathcal{V}_2)\in H^{4*}(M; \QQ)[[q]],\ee
and show that the degree 12 component is a modular form of weight 14 over $SL(2,\mathbb{Z})$ when $M=Z$ is 12 dimensional. Using the fact that the space of modular forms of weight 14 over $SL(2,\mathbb{Z})$ is 1-dimensional and spanned by $E_4^2(\tau)E_6(\tau)$, where $E_4(\tau)$ and $E_6(\tau)$ are the Eisenstein series of weight 4 and 6 respectively (c.f. Chap 2.1 in \cite{BGHZ}), we deduce a factorization formula (\ref{spinc-form1}). This formula reduces to the Witten-Freed-Hopkins anomaly cancellation formula (\ref{WFH-mainformula}) (when $\xi$ is trivial and $c=0$). 

Back to the algebraic theory of cubic forms, consider the polynomial
\be \widetilde f_{a, b}(x)=4(a+x)^3-6a(a+x)^2-(b-3a^2)(a+x) \ee
with understanding $3a^2$ as an element in $L^*$ by abusing notations. 

It is easy to check that 
\be \label{old-new} \widetilde f_{a, b}(x)=\frac{f_{a, b}(2x)+f_{a, b}(0)}{2}. \ee
From (\ref{cubicrefine}), (\ref{mod48}) and (\ref{old-new}), we see that $ \frac{\widetilde f_{a, b}(x)-\widetilde f_{a, b}(0)}{24}$
is a cubic refinement of $xyz$, and when $a$ is a characteristic element and $b$ modulo 24 satisfies (\ref{mod24}), the following holds,
\be \frac{\widetilde f_{a, b}(x)-\widetilde f_{a, b}(0)}{24}\in \ZZ. \ee 

In view of (\ref{old-new}) and the half integrality of $\frac{f_{\lambda, p}(2x)}{48}$ as well as its analytic meaning, we see that on a closed 12 dimensional spin manifold $Z$, when $a=\lambda, b=p$, 
\be \frac{\widetilde f_{\lambda, p}(x)}{24}\ee is an integer, which
has analytic meaning as (half of) the indices of certain twisted Dirac operator on $Z$ (Theorem \ref{spin-new}). 

In Section \ref{proof of spinc-new}, we show that Theorem \ref{spin-new} can actually also be deduced from a factorization formula (\ref{spinc-form2}) proved there by constructing the {\em twisted Witten class} (specified to the case when $\xi$ is trivial and $c=0$ in (\ref{RPi}) for the spin case here for simplicity)
\be \mathcal{R}(\mathcal{V})=e^{\frac{1}{24}E_2(\tau)\cdot\frac{1}{30}c_2(W)}\mathcal{W}(TZ)\varphi(\tau)^{8}\mathrm{ch}(\mathcal{V})\in H^{4*}(Z; \QQ)[[q]]. \ee

To consider $\widetilde f_{\lambda, p}(x)$ defined via (\ref{old-new}) might look redundant. 
However we include it here because first it arises from the modular form $\mathcal{R}(\mathcal{V})$, different from $\mathcal{Q}(\mathcal{V}_1, \mathcal{V}_2)$; secondly, we would like to point out that the relation (\ref{old-new}) between the cubic form $\frac{\widetilde f_{\lambda, p}(x)}{24}$ and the cubic form $\frac{f_{\lambda, p}(2x)}{48}$ corresponds exactly to the relation between the corresponding modular forms:
\be \mathcal{R}(\mathcal{V})=\sqrt{\mathcal{Q}(\mathcal{V}, \mathcal{V})\cdot \mathcal{W}(TZ)}\in H^{4*}(Z; \QQ)[[q]]. \ee

\begin{remark} The method of constructing $\mathcal{Q}(\mathcal{V}_1, \mathcal{V}_2)$, $\mathcal{R}(\mathcal{V})$ and using their modularities to prove factorization formulas appeared in \cite{HLZ2} for fiber bundles with 10 dimensional fibers. In this paper, we apply this method to 12 dimensional manifolds. 
\end{remark}

In \cite{FH19}, Freed and Hopkins showed that for a pin$^+$ 12-manifold $Z$ with $\mathfrak{m}_c$ structure there exists a characteristic class $\widetilde c\in H^4(Z; \widetilde \ZZ)/\mathrm{torsion}$, which is a characteristic element in $$L=H^4(Z; \widetilde \ZZ)/\mathrm{torsion}$$ and a characteristic class $\overline p\in H^8(Z; \ZZ)/\mathrm{torsion}$ such that (\ref{mod24}) is satisfied. Then 
\be \frac{f_{\widetilde c, \overline p}(0)}{48} \ee
is a half integer. Freed-Hopkins (Theorem 2.2, Theorem 6.2 in \cite{FH19})
proved the following anomaly cancellation\be \exp{\left( -2\pi i\cdot \frac{f_{\widetilde c, \overline p}(0)}{48}\right)}\cdot \exp{ \left(2\pi i\cdot \frac{\eta(TZ-2)}{4}\right)}=1. \ee
We have not recovered this result via modularity yet.

\subsection{Spin$^c$ and spin$^{\omega_2}$ cases} \label{intro-spincspinw2} Suppose $Z$ is a closed 12 dimensional smooth manifold not necessarily spin. Then the classes $\lambda$ and $p$ in the previous subsection for the spin case do not necessarily exist. By abusing notations we just denote $\frac{1}{2}p_1$ by $\lambda$, and $\frac{1}{2}\left(p_2-\frac{1}{4}p_1^2\right)$ by $p$ in $H^*(Z; \QQ)$. The original cubic forms $ \frac{f_{\lambda, p}(2x)}{48}$ and $\frac{\widetilde f_{\lambda, p}(x)}{24} $
now only take values in rationals rather than integers and lose analytic interpretations as indices of twisted Dirac operators.

In Section \ref{originalcubic}, we will show that the analytic interpretations can be rescued on spin$^c$ and spin$^{\omega_2}$ manifolds by applying the Rokhlin congruence formulas established in \cite{Z93, Z94, Z09, Z17} via mod 2 indices on 10 dimensional spin or pin$^-$ manifolds. Let us be more precise in the following. 

Let $K$ be an $8k+4$ dimensional spin$^c$ manifold. Let $\xi$ be the complex line bundle of the spin$^c$ structure. Let $c=c_1(\xi)\in H^2(K; \ZZ).$ Let $U$ be a {\em characteristic submanifold} of the spin$^c$ structure, i.e. an orientable $8k+2$ dimensional submanifold of $K$ such that $[U]\in H_{8k+2}(K; \ZZ)$ is dual to $c$. $U$ carries a canonically induced spin structure up to spin cobordism. Let $D_U$ be the Atiyah-Singer spin Dirac operator on $U$. 

The Rokhlin congruence formula (\ref{Z2}) in Theorem \ref{Rok} allows one to write the twisted $\widehat A$-genus on $K$ in terms of \ \ $\!\!\!\!\!\!\mod2$ indices of twisted Dirac operators on $U$ with a correction term. Combining (\ref{Z2}) with the Witten-Freed-Hopkins anomaly cancellation formula (\ref{WFH-mainformula}) and the new formula (\ref{spin-newformula}), we obtain Theorem \ref{spinc-mod2theorem}. 

We call an $8k+4$ dimensional closed smooth oriented manifold $K$ a {\em spin$^{\omega_2}$ manifold} if there exists a rank 2 nonorientable real vector bundle $\xi$ such that $\omega_2(TK)=\omega_2(\xi)$. For such manifolds, the corresponding Rokhlin congruence formula has been studied in \cite{Z94}. Let $U$ be a {\em characteristic submanifold} of the spin$^{\omega_2}$ structure, i.e. a nonorientable $8k+2$ dimensional submanifold of $K$ such that $[U]\in H_{8k+2}(K; \ZZ/2\ZZ)$ is dual to $\omega_2(TK)\in H^2(K; \ZZ/2\ZZ)$. $U$ carries a canonically induced pin$^-$ structure up to pin$^-$ cobordism. 

The Rokhlin congruence formula (\ref{Zpin-}) in Theorem \ref{Rok-pin-} allows one to write the twisted $\widehat A$-genus of $K$ in terms of twisted \ $\!\!\!\!\!\!\mod2$ analytic indices on $U$ with a correction term. Combining (\ref{Zpin-}) with the Witten-Freed-Hopkins anomaly cancellation formula (\ref{WFH-mainformula}) and the new formula (\ref{spin-newformula}), we obtain Theorem \ref{spinw2-mod2theorem}. Details about spin$^{\omega_2}$ structures and the obstruction classes to them will be studied in Section \ref{sectionspinw2}.

Section \ref{spinc-newcubic} provides another way on spin$^c$ manifolds to restore the beautiful nature of the Witten-Freed-Hopkins anomaly cancellation formula on spin manifolds. Generally there are no characteristic elements on spin$^c$ manifolds like $\lambda$ for spin manifolds. In the algebraic theory of cubic forms, suppose $a\in L$ is not necessarily a characteristic element, then the Freed-Hopkins Theorem \ref{FH-mod 24} is weakened to mod 12. More precisely, if $\widehat a$ is the mod 12 reduction, then there exists a unique $\widehat b\in L^\ast\otimes \ZZ/12\ZZ$ such that 
\be \label{mod12} \widehat b(\widehat x)=4\widehat x^3+6\widehat a\widehat x^2+3\widehat a^2\widehat x\ \ \ \ \ (\!\!\!\!\!\!\mod \, 12)\ee
for all $\widehat x\in L\otimes \ZZ/ 12 \ZZ.$ 

Let $Z$ be a 12 dimensional closed smooth spin$^c$ manifold. Let $c$ be the first Chern class of the complex line bundle of the spin$^c$ structure. In Section \ref{spinc-newcubic}, we will show that on $Z$ there exist characteristic classes $a=\lambda_c \in H^4(Z; \ZZ), b=p_c\in H^8(Z; \ZZ)$ such that (\ref{mod12}) holds. This is derived from Theorem \ref{spinc-main} and Theorem \ref{spinc-new}, in which we show that 
\be \frac{f_{\lambda_c, p_c}(2x)}{24}\ee
is a half integer and 
\be \frac{\widetilde f_{\lambda_c, p_c}(x)}{12} \ee is an integer, by demonstrating their analytic meanings using indices of twisted spin$^c$ Dirac operators. 

The left hand sides of Theorem \ref{spinc-main} and Theorem \ref{spinc-new} provide new cubic forms on spin$^c$ manifolds, generalizing the cubic forms in the spin case when the manifold is spin and $c=0$. The coefficients appearing in the new cubic forms will be studied in Section \ref{coeff}.

Clearly $$h(x)=\frac{f_{\lambda_c, p_c}(2x)-f_{\lambda_c, p_c}(0)}{24}$$ is a polynomial of $x$ valued in $\ZZ$ with highest term $\frac{1}{3}x^3$. We therefore see that on spin$^c$ 12-manifolds, there is an integral cubic refinement only for $2xyz$ rather than $xyz$,
\be 2\langle x\cup y\cup z, [Z]\rangle=h(x+y+z)-h(x+y)-h(x+z)-h(y+z)+ h(x)+h(y)+h(z)-h(0). \ee

Theorem \ref{spinc-main} and Theorem \ref{spinc-new} are deduced from the factorization formulas (\ref{spinc-form1}) and (\ref{spinc-form2}), which are proved in Section \ref{proof-spinc} by constructing the generalized Witten class 
\be \mathcal{W}_c(TZ):=e^{\frac{1}{24}E_2(\tau)\cdot (p_1(TZ)-3c^2)}\widehat{A}(TZ)\mathrm{ch}\left( \Theta \left( T_{\mathbb{C%
}}Z, \xi_\CC\right) \right)\in H^{4*}(Z; \QQ)[[q]]\ee
and the {\em twisted generalized Witten classes}
\be \mathcal{Q}_c(\mathcal{V}_1, \mathcal{V}_2):=e^{\frac{1}{24}E_2(\tau)\cdot\left(\frac{1}{30}(c_2(W_1)+c_2(W_2)\right)}\mathcal{W}_c(TZ)\varphi(\tau)^{16}\mathrm{ch}(\mathcal{V}_1)\mathrm{ch}(\mathcal{V}_2)\in H^{4*}(Z; \QQ)[[q]],\ee
\be \mathcal{R}_c(\mathcal{V}):=e^{\frac{1}{24}E_2(\tau)\cdot\frac{1}{30}c_2(W)}\mathcal{W}_c(TZ)\varphi(\tau)^{8}\mathrm{ch}(\mathcal{V})\in H^{4*}(Z; \QQ)[[q]].\ee

Applying the Rokhlin congruence (\ref{Z1}) in Theorem \ref{Rok} to Theorems \ref{spinc-main} and \ref{spinc-new}, we give mod 2 index interpretations of $\frac{f_{\lambda_c, p_c}(2x)}{12}$ and $\frac{\widetilde f_{\lambda_c, p_c}(x)}{12}$ in Theorem \ref{B}. Now on spin$^c$ manifolds, we have two types of mod 2 formulas: Theorem \ref{spinc-mod2theorem} and Theorem \ref{B}. 
Subtracting the corresponding sides of these formulas, we obtain Corollary \ref{differ}, which involves interesting quadratic forms on 10 dimensional spin manifolds and mod 2 indices. This motivates us to introduce an intersection pairing on 10 dimensional closed spin manifold in the presence of a complex line bundle. Using the computation of Stong on $\Omega_{11}^{spin}(K(\mathbb{Z}, 4))$ (\cite{Stong86}), we are able to obtain Theorem \ref{spinbcthm}, which is more general than Corollary \ref{differ}. The quadratic forms appearing in Theorem \ref{spinbcthm} are related to mod 2 indices and give interesting quadratic refinements of the intersection pairings. See Remark \ref{remark-spinbcthm}.

\subsection{Orientable case}
Let $Z$ be a 12 dimensional closed smooth oriented manifold without assuming any additional topological constraints. In this general situation, we are not able to find characteristic classes $a$ of degree 4 and $b$ of degree 8 such that the mod 12 equality (\ref{mod12}) holds for all $x\in H^4(Z; \ZZ)$. 

However, by our modularity method, we find that if 
$$b=4p_1^2-7p_2$$ and $\widehat b=b\, (\!\!\!\!\mod3)$, then the following mod 3 equality holds,
\be \label{mod3}  \widehat b(\widehat x)=4\widehat x^3\ \ \ \ \ (\!\!\!\!\!\!\mod\, 3)\ee
for all $\widehat x\in H^4(Z; \ZZ)\otimes \ZZ/ 3 \ZZ.$
In fact, in Theorem \ref{o1} and Theorem \ref{o2}, we will show that when $a=-p_1$, 
\be\frac{f_{a, b}(2x)}{6} \ee
is a half integer and 
\be \frac{\widetilde f_{a, b}(x)}{3}\ee is an integer, by demonstrating their analytic meanings as indices of twisted signature operators. 

Now let $$h(x)=\frac{f_{a, b}(2x)-f_{a, b}(0)}{6},$$ which is a polynomial of $x$ with value in $\ZZ$ and highest term $\frac{8}{6}x^3$. We therefore see that on general oriented 12-manifolds, there is an integral cubic refinement only for $8xyz$ rather than for $2xyz$ or $xyz$,
\be 8\langle x\cup y\cup z, [Z]\rangle=h(x+y+z)-h(x+y)-h(x+z)-h(y+z)+ h(x)+h(y)+h(z)-h(0). \ee

We prove Theorem \ref{o1} and Theorem \ref{o2} in Section \ref{proof-o1o2} by constructing the $\widehat{L}$-Witten class and twisted $\widehat{L}$-Witten classes (see (\ref{L-Witten}), (\ref{L-Witten-twofold}) and (\ref{L-Witten-single})).

\begin{remark} In view of Theorem 1 in \cite{Liu1}, we may obtain formulas more general than the ones presented in Section \ref{sectionorien}. In fact, let $F$ be a spin vector bundle of even rank over $M$ such that $$\frac{1}{2}p_1(F)=\frac{1}{2}p_1(M)$$ and $S^\pm(F)$ be the spinor bundles of $F$. In the construction of the $\widehat{L}$-Witten class (\ref{L-Witten}), one can replace the bundle 
$$\Theta_1 \left( T_{\mathbb{C}}M\right)\otimes \Theta_2 \left( T_{\mathbb{C}}M\right)\otimes\Theta_3 \left( T_{\mathbb{C}}M\right) $$
by
$$\Theta_1 \left( F_{\mathbb{C}}\right)\otimes \Theta_2 \left( F_{\mathbb{C}}\right)\otimes\Theta_3 \left( F_{\mathbb{C}}\right)$$
(see the construction of $\Theta_1, \Theta_2, \Theta_3$ in (\ref{Theta1}), (\ref{Theta2}, (\ref{Theta3})). Then the similar modularity method will deduce formulas giving analytic interpretations to some interesting new cubic forms (depending on the rank of $F$) via the indices of twisted Dirac operators $$D\otimes ((S^+(F)\oplus S^-(F))\otimes V_\CC(x)),$$
$$D\otimes ((S^+(F)\oplus S^-(F))\otimes T_\CC Z),$$ $$D\otimes((S^+(F)\oplus S^-(F))\otimes (\wedge^2(F_\CC)-S^2 (F_\CC))),$$ and
$$ D\otimes (S^+(F)\oplus S^-(F)).$$

\end{remark}

\subsection{Organization of the paper} In Section \ref{sectionspin}, we review the Witten-Freed-Hopkins formula (Theorem \ref{WFH-main}) and present the new type of anomaly cancellation formula (Theorem \ref{spin-new}). We point out that they are special cases of the corresponding anomaly cancellation formulas for spin$^c$ manifolds, which are given in Section \ref{spinc-newcubic}. As these formulas are consequences of the factorization formulas (\ref{spinc-form1}) and (\ref{spinc-form2}), which hold on the level of differential forms, Theorems \ref{WFH-main} and \ref{spin-new} have analogues for manifolds with boundary. They are stated in Theorem \ref{spin-boundary}. 

In Section \ref{sectionspinc}, we give the Witten-Freed-Hopkins anomaly cancellation formulas on 12 dimensional spin$^c$ and spin$^{\omega_2}$ manifolds. First in Section \ref{originalcubic}, we consider the original cubic forms as in the spin case. We use the Rokhlin congruence formulas in \cite{Z93, Z94, Z09, Z17} to express the original cubic forms as mod 2 indices on 10 dimensional characteristic submanifolds with correction terms. Then in Section \ref{spinc-newcubic}, for the spin$^c$ case, we give new cubic forms and anomaly cancellation formulas, which generalize the anomaly cancellation formulas in the spin case. We will also give the mod 2 congruence formulas for the new cubic forms as well as the formulas for manifolds with boundary. 

In Section \ref{sectionorien}, we present the anomaly cancellation formulas for 12 dimensional orientable manifolds. 

In Section \ref{proof}, the proofs of the main theorems (Theorems \ref{spinc-main}, \ref{spinc-new}, \ref{o1} and \ref{o2}) in Section \ref{sectionspinc} and Section \ref{sectionorien} will be given. Actually what we prove are the factorization formulas (\ref{spinc-form1}), (\ref{spinc-form2}), (\ref{orient-form1}) and (\ref{orient-form2}), which are all equalities on the levels of differential forms. 

In Section \ref{coeff}, we study the characteristic class coefficients appearing in the cubic forms in Theorems \ref{spinc-main}, \ref{spinc-new}, \ref{o1} and \ref{o2}. 

In Section \ref{sectionspinw2}, details about spin$^{\omega_2}$ structures and the obstruction classes to them are studied.

$$ $$

\noindent{\bf Acknowledgements.}
Fei Han was partially supported by the grant AcRF R-146-000-263-114 from National University of Singapore. 

Ruizhi Huang was supported by Postdoctoral International Exchange Program for Incoming Postdoctoral Students under Chinese Postdoctoral Council and Chinese Postdoctoral Science Foundation, and ``Chen Jingrun'' Future Star Program of AMSS.
He was also supported in part by Chinese Postdoctoral Science Foundation (Grant nos. 2018M631605 and 2019T120145), and National Natural Science Foundation of China (Grant no. 11801544 and 11688101). 

Kefeng Liu is partially supported by an NSF grant. 

Weiping Zhang was partially supported by the NSFC Grant no. 11931007, and Nankai Zhide Foundation.

\section{Cubic forms on spin 12-manifolds} \label{sectionspin}
 In this section, we review the Witten-Freed-Hopkins anomaly cancellation formula, present the new type of cancellation formula and point out that they can be deduced from the more general formulas for the spin$^c$ case in Section \ref{spinc-newcubic}. We will also present the corresponding formulas when the manifolds have boundary. 
 
Let $Z$ be a closed spin smooth 12-manifold. Recall that $\lambda\in H^4(Z; \ZZ)$ satisfies $2\lambda=p_1$.
\begin{theorem}[\protect Freed-Hopkins \cite{FH19}] \label{FH-p}There is a canonical degree 8 integral class $p\equiv \omega_8 \ (\!\!\!\!\mod\, 2)$ such that 
$$2p=p_2-\lambda^2.$$
\end{theorem}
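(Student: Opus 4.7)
I would prove this in two steps: first showing that $p_2 - \lambda^2$ is divisible by $2$ in $H^8(Z;\ZZ)$ to produce $p$, and then identifying the mod-$2$ reduction of $p$ with $w_8$ to single out a canonical choice.

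For the divisibility, the key input is the universal congruence $p_i \equiv w_{2i}^2 \pmod 2$, valid for any real vector bundle. This follows from $p_i(E) = (-1)^i c_{2i}(E\otimes_{\RR}\CC)$, the real identification $(E\otimes\CC)_{\RR} \cong E\oplus E$, the Whitney sum formula (cross terms $w_a w_b$ with $a \neq b$ appear twice and vanish mod $2$), and the standard reduction $c_i(F) \equiv w_{2i}(F_{\RR}) \pmod 2$ for complex bundles. Specializing to $i = 2$ gives $p_2 \equiv w_4^2 \pmod 2$. Combined with $\lambda \equiv w_4 \pmod 2$ on a spin manifold, this yields $p_2 - \lambda^2 \equiv 0 \pmod 2$, and the Bockstein exact sequence
\[
H^8(Z;\ZZ) \xrightarrow{\cdot 2} H^8(Z;\ZZ) \xrightarrow{\rho} H^8(Z;\ZZ/2)
\]
produces an integer halving $p$.

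To identify $\rho(p)$ with $w_8$, the plan is to upgrade this mod-$2$ congruence to a mod-$4$ statement $p_2 \equiv \lambda^2 + 2 W_8 \pmod 4$, where $W_8$ is some integer lift of $w_8$. Existence of $W_8$ requires $Sq^1 w_8 = 0$; by Wu's formula on orientable manifolds $Sq^1 w_8 = w_9$, and for a closed spin $12$-manifold a short computation with Wu classes (noting $v_i = 0$ for $i \geq 7$ by dimension and $v_1 = v_2 = v_3 = v_5 = v_6 = 0$ on spin, leaving $w_9 = Sq^5 v_4 = Sq^5 w_4 = 0$ because $\deg w_4 = 4 < 5$) delivers $w_9 = 0$. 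The mod-$4$ refinement itself is the heart of the argument: since $\lambda \equiv w_4 \pmod 2$, the mod-$4$ reduction of $\lambda^2$ coincides with the Pontryagin square $\mathfrak{P}(w_4) \in H^8(Z;\ZZ/4)$, so the target becomes $p_2 \equiv \mathfrak{P}(w_4) + 2 w_8 \pmod 4$, a universal spin-bundle identity analogous to the classical $p_1 \equiv \mathfrak{P}(w_2) \pmod 4$. Granting it, the class $p := W_8 + \tfrac{1}{2}(p_2 - \lambda^2 - 2 W_8)$ is an integer class satisfying both $2p = p_2 - \lambda^2$ and $\rho(p) = w_8$.

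The main obstacle is the mod-$4$ universal identity $p_2 \equiv \mathfrak{P}(w_4) + 2 w_8 \pmod 4$ for spin bundles. I would attack it universally in $H^\ast(BSpin;\ZZ/4)$ in degree $8$, either via the known additive structure of that ring through degree $8$ or by testing on a suitable set of low-degree spin-cobordism generators; the remaining work — compatibility of Bockstein with cup products and uniqueness of the canonical lift modulo the image of $\beta \colon H^7(Z;\ZZ/2) \to H^8(Z;\ZZ)$ — is routine bookkeeping.
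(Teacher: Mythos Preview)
Your plan is a genuinely different route from the paper's. The paper (Section~\ref{coeff}, Theorem~\ref{pc} with $c=0$) simply quotes Duan's integral spin classes $q_i \in H^{4i}(BSpin;\ZZ)$, the relations $p_1=2q_1$, $p_2=2q_2+q_1^2$, and the Benson--Wood reduction $q_2\equiv w_8 \pmod 2$; then $p:=q_2$ is the canonical universal class and there is nothing left to check. Your approach instead argues divisibility by the elementary congruence $p_2\equiv w_4^2\equiv\lambda^2\pmod 2$ and then pins down the mod-$2$ reduction via the Pontryagin-square identity $p_2\equiv\mathfrak P(w_4)+2w_8\pmod 4$.

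There is one genuine gap and one soft spot. The gap: $Sq^1 w_8=0$ is only \emph{necessary} for an integer lift $W_8$; it gives $\rho(\beta w_8)=0$, not $\beta w_8=0$, and a closed spin $12$-manifold can carry $4$-torsion in $H^9$. This is easily repaired without $W_8$: once the mod-$4$ identity holds, any halving $p_0$ of $p_2-\lambda^2$ satisfies $\rho(p_0)-w_8\in\operatorname{im}Sq^1$, and setting $p:=p_0-\beta(z)$ for suitable $z\in H^7(Z;\ZZ/2)$ corrects the reduction. The soft spot: your ``main obstacle'', the universal mod-$4$ identity in $H^8(BSpin;\ZZ/4)$, is true (indeed it drops out of $p_2=2q_2+q_1^2$), but establishing it from scratch---whether by computing $H^8(BSpin;\ZZ/4)$ or otherwise---needs essentially the same low-degree structural knowledge of $H^*(BSpin;\ZZ)$ that the paper simply cites; testing on spin-cobordism generators detects characteristic \emph{numbers}, not cohomology classes of $BSpin$, so that shortcut does not obviously suffice. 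Finally, building $p$ manifold-by-manifold leaves naturality (hence ``canonicity'') to be argued separately, whereas the paper's universal $q_2$ settles that for free.
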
 

Let $V(x)$ denote the real adjoint vector bundle associated to the principal $E_8$ bundle determined by a class $x\in H^4(Z; \ZZ)$. Denote by $V_\CC(x)$ the complexification of $V(x)$. Let $$C(x)=\lambda+2x\in H^4(Z; \ZZ).$$ 

One has the following important formula,
\begin{theorem}[\protect Witten-Freed-Hopkins \cite{W97, FH19}] \label{WFH-main} The following identity holds,
\be \label{WFH-mainformula}   \left\langle  \frac{C(x)[p-C(x)^2]}{48} , [Z]  \right\rangle=\left\langle   \frac{1}{2}\widehat A(TZ)\ch (V_\CC(x))+\frac{1}{4}\widehat A(TZ)\ch (T_\CC Z)-\widehat A(TZ), [Z] \right\rangle.\ee
\end{theorem}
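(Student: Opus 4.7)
I would derive the formula by the modular-forms method announced in the introduction, specializing the forthcoming factorization formula \eqref{spinc-form1} to the spin case (that is, $\xi$ trivial and $c=0$) rather than proceeding by the direct Chern-class computation of Witten--Freed--Hopkins. To the 12-manifold $Z$ and the class $x\in H^4(Z;\ZZ)$ I associate the twisted Witten characteristic form
\begin{equation*}
\mathcal{R}(\mathcal{V}(x)) = e^{\frac{1}{24}E_2(\tau)\bigl(p_1(TZ) + \frac{1}{30}c_2(V_\CC(x))\bigr)}\widehat{A}(TZ)\,\ch\bigl(\Theta(T_\CC Z)\bigr)\,\varphi(\tau)^{8}\ch(\mathcal{V}(x)),
\end{equation*}
where $\mathcal{V}(x)\in K(Z)[[q]]$ is constructed from the level-one basic representation of affine $E_8$ on the principal $E_8$-bundle determined by $x$, and is normalized so that $\varphi(\tau)^{8}\ch(\mathcal{V}(x))$ begins with the constant $1$ at $q^0$ and contains $\ch V_\CC(x) = 248 - 60x + 6x^2 - \tfrac{1}{3}x^3$ at $q^1$.

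I would first verify that $\bigl\langle\mathcal{R}(\mathcal{V}(x))^{(12)},[Z]\bigr\rangle$ is a modular form of weight $14$ for $SL(2,\ZZ)$. The quasi-modular anomaly of $\widehat{A}(TZ)\,\ch\Theta(T_\CC Z)$ is governed by $p_1(TZ)=2\lambda$, and that of $\varphi(\tau)^{8}\ch(\mathcal{V}(x))$ by $\tfrac{1}{30}c_2(V_\CC(x))=2x$; the $E_2$-prefactor is designed to absorb both anomalies, and their combined exponent equals $2(\lambda+2x)=2C(x)$, which is precisely why $C(x)$ is the natural characteristic element in this context. Since the space of weight-$14$ modular forms on $SL(2,\ZZ)$ is one-dimensional, spanned by $E_4(\tau)^2 E_6(\tau)=1-264q+\cdots$, one concludes
\begin{equation*}
\bigl\langle\mathcal{R}(\mathcal{V}(x))^{(12)},[Z]\bigr\rangle=h(Z,x)\,E_4(\tau)^2 E_6(\tau)
\end{equation*}
for a single rational number $h(Z,x)$.

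Comparing the $q^0$ and $q^1$ coefficients of this identity then yields \eqref{WFH-mainformula}. At $q^0$, the bundle $\Theta(T_\CC Z)$ trivializes and the degree-$12$ component of the left-hand side, after using $2\lambda=p_1$ together with Theorem \ref{FH-p}'s identity $2p=p_2-\lambda^2$, reduces to $\tfrac{1}{48}\langle C(x)(p-C(x)^2),[Z]\rangle$. At $q^1$, $\Theta$ contributes the Witten bundle $\widetilde{T_\CC Z}$ and $\varphi^{8}\ch(\mathcal{V}(x))$ contributes the basic-representation term involving $V_\CC(x)$; collecting the resulting rational combinations of $\widehat A$-genera produces exactly $\bigl\langle\tfrac{1}{2}\widehat A\ch V_\CC(x)+\tfrac{1}{4}\widehat A\ch T_\CC Z-\widehat A,[Z]\bigr\rangle$. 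Equating the two expressions for $h(Z,x)$ (absorbing the $-264$ coming from the $q$-coefficient of $E_4^2E_6$ into the $q^0$-side) produces precisely \eqref{WFH-mainformula}.

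\textbf{Main obstacle.} The chief technical challenge is bookkeeping: verifying that the quasi-modular anomaly of the $E_2$-exponential exactly cancels those of $\widehat A\,\ch\Theta(T_\CC Z)$ and $\varphi(\tau)^{8}\ch(\mathcal{V}(x))$ so that the top-degree form is genuinely modular of weight $14$, and then expanding the $q^1$ coefficient accurately enough to recover the precise rational combination appearing on the right-hand side of \eqref{WFH-mainformula}. All of the integrality structure --- in particular the denominator $48$ and the appearance of $p$ rather than simply $p_2-\lambda^2$ --- is encoded in matching these two Fourier coefficients against the normalized generator $E_4^2 E_6$, and none of the three resulting identifications is a pure formality.
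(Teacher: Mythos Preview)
Your overall strategy---build a twisted Witten form, prove modularity, and compare the $q^0$ and $q^1$ Fourier coefficients against the unique generator of the relevant space of modular forms---is exactly the paper's. However, you have chosen the wrong modular form, and as written your argument proves Theorem~\ref{spin-new} rather than Theorem~\ref{WFH-main}.

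Concretely: the class you construct, with a \emph{single} $E_8$ factor $\varphi(\tau)^8\ch(\mathcal{V}(x))$, is (the spin specialization of) $\mathcal{R}(P,\xi,\tau)$ from \S\ref{proof of spinc-new}, and its degree-$12$ component is a modular form of weight~$10$, not~$14$. Indeed the theta-function expression contributes weight~$4$ from one $E_8$ factor and weight~$6$ from the six Chern roots of $T_\CC Z$. Consistently, the generator you need is $E_4E_6=1-264q+\cdots$, and your assertion that $E_4^2E_6=1-264q+\cdots$ is false (it is $1-24q+\cdots$). More seriously, your computation of the $E_2$-exponent contains an arithmetic slip: with $p_1=2\lambda$ and $\tfrac{1}{30}c_2(V_\CC(x))=2x$ one gets $p_1+\tfrac{1}{30}c_2=2\lambda+2x=2(\lambda+x)=2\widetilde C(x)$, not $2(\lambda+2x)=2C(x)$. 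So the degree-$4$ factor that emerges from your factorization is $\widetilde C(x)$, and comparing coefficients yields \eqref{spin-newformula}, not \eqref{WFH-mainformula}.

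The fix, following \S\ref{proof of spinc-main}, is to use \emph{two} copies of the $E_8$ character: set
\[
\mathcal{Q}(\mathcal{V}(x),\mathcal{V}(x))
= e^{\frac{1}{24}E_2(\tau)\bigl(p_1(TZ)+\frac{1}{15}c_2(V_\CC(x))\bigr)}
\widehat A(TZ)\,\ch\!\bigl(\Theta(T_\CC Z)\bigr)\,\varphi(\tau)^{16}\,\ch(\mathcal{V}(x))^2 .
\]
Now the $E_2$-exponent is $p_1+\tfrac{1}{15}c_2=2\lambda+4x=2C(x)$, the degree-$12$ component has weight $6+4+4=14$, and comparison with $E_4^2E_6=1-24q+\cdots$ produces precisely the factorization \eqref{spinc-form1} (with $c=0$), from which \eqref{WFH-mainformula} follows after computing the degree-$8$ factor to be $\tfrac{1}{24}\bigl(p-C(x)^2\bigr)$. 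In short, your plan is right but you must double the $E_8$ input.
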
 

\begin{proof} Taking $\xi=\CC$ and $c=0$ in Theorem \ref{spinc-main}, one obtains (\ref{WFH-mainformula}). 
\end{proof}

We also have the following new cancellation formula. Let $$\widetilde C(x)=\lambda+x\in H^4(Z; \ZZ).$$
\begin{theorem}\label{spin-new} Let $$\widetilde p=p-3\lambda^2.$$ The following identity holds,
\be \label{spin-newformula} \left\langle  \frac{\widetilde C(x)[\widetilde p+6\lambda\widetilde C(x)-4\widetilde C(x)^2]}{24} , [Z]       \right\rangle=\left\langle   \frac{1}{2}\widehat A(TZ)\ch (V_\CC(x))+\frac{1}{2}\widehat A(TZ)\ch (T_\CC Z)+122\widehat A(TZ), [Z] \right\rangle.\ee

\end{theorem}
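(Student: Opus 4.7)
The plan is to deduce Theorem \ref{spin-new} from the forthcoming spin$^c$ factorization (Theorem \ref{spinc-new} / formula (\ref{spinc-form2})) by specializing to $\xi = \CC$ and $c = 0$, exactly parallel to how Theorem \ref{WFH-main} is obtained from Theorem \ref{spinc-main}. So the real task is to sketch how one reaches the underlying differential-form factorization via modularity. First I would reformulate the left-hand side algebraically: using $\widetilde{C}(x) = \lambda + x$ and $\widetilde{p} = p - 3\lambda^2$ in the definition of $\widetilde{f}_{a,b}$, one sees
$$\widetilde{f}_{\lambda, p}(x) = 4\widetilde{C}(x)^3 - 6\lambda\,\widetilde{C}(x)^2 - \widetilde{p}\,\widetilde{C}(x) = -\widetilde{C}(x)\bigl[\widetilde{p} + 6\lambda\widetilde{C}(x) - 4\widetilde{C}(x)^2\bigr],$$
so (\ref{spin-newformula}) is equivalent to identifying $-\langle \widetilde{f}_{\lambda,p}(x)/24, [Z]\rangle$ with the asserted combination of twisted Dirac indices.

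The analytic engine would be the twisted Witten class
$$\mathcal{R}(\mathcal{V}) = e^{\frac{1}{24}E_2(\tau)\cdot \frac{1}{30}c_2(W)}\,\mathcal{W}(TZ)\,\varphi(\tau)^{8}\,\ch(\mathcal{V}),$$
where $\mathcal{V} \in K(Z)[[q]]$ is the element associated to the basic representation of affine $E_8$ at level one on the bundle determined by $x$, so that $c_2(W) = 60x$. The first technical step is to show that the degree-12 component of $\mathcal{R}(\mathcal{V})$ is a modular form of weight 14 on $SL(2,\ZZ)$. This rests on three standard ingredients: the modular transformation of $\mathcal{W}(TZ)$ (in which the $E_2$-correction absorbs the failure of modularity coming from $p_1(TZ)$), the modularity of $\eta(\tau)$ through $\varphi(\tau)^{24} = \Delta(\tau)/q$, and the Jacobi-form behavior of the basic $E_8$ character, where the factor $\varphi(\tau)^{8}$ is precisely what is needed to cancel the anomaly so that $\mathcal{R}(\mathcal{V})$ lies in the ring of modular forms.

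Because the space of weight-14 modular forms on $SL(2,\ZZ)$ is one-dimensional, spanned by $E_4(\tau)^2 E_6(\tau)$, the degree-12 component of $\mathcal{R}(\mathcal{V})$ must equal a single cohomology class times $E_4^2 E_6$. Computing the $q^0$-coefficient of both sides then pins down this cohomology class and yields the factorization formula (\ref{spinc-form2}). Extracting the $q^0$-coefficient from the constructed side, using $\ch V_\CC(x) = 248 - 60x + 6x^2 - \tfrac{1}{3}x^3$, the expansions of $E_2, E_4, E_6$, and the expansion $\varphi(\tau)^8 = 1 - 8q + \cdots$, then produces precisely the identity (\ref{spin-newformula}).

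The hard part, I expect, is the bookkeeping: (i) verifying that the $E_2$-correction in $\mathcal{W}(TZ)$ together with the $\frac{1}{24}E_2 \cdot \frac{1}{30}c_2(W) = \frac{1}{24}E_2 \cdot 2x$ term cooperate correctly so that the anomaly is exactly cancelled rather than merely absorbed; (ii) tracking how the shift $\widetilde{p} = p - 3\lambda^2$ emerges in the degree-8 part of the expansion, and how the coefficient $-4$ in front of $\widetilde{C}(x)^3$ arises (as opposed to the $+8$ that would appear naively from $f_{\lambda,p}(2x)$ — this is the algebraic content of (\ref{old-new})); and (iii) identifying the universal constant $122$ as the combinatorial contribution of the higher $q$-powers of $\varphi(\tau)^{8}\ch(\mathcal{V})$ to the $\widehat{A}(TZ)$ term, which amounts to counting how the first nontrivial Fourier coefficient of the basic $E_8$ character enters. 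Once the spin$^c$ factorization (\ref{spinc-form2}) is in hand, specializing $\xi = \CC$ and $c = 0$ collapses $\mathcal{W}_c(TZ)$ to $\mathcal{W}(TZ)$ and $\lambda_c, p_c$ to $\lambda, p$, yielding Theorem \ref{spin-new} immediately.
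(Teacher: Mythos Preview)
Your overall plan---specialize Theorem \ref{spinc-new} to $\xi=\CC$, $c=0$---is exactly what the paper does, and the reduction is correct once you observe that $\ch(-\xi_\CC\otimes\xi_\CC+\xi_\CC+246)=244$ and divide both sides by $2$.

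However, your sketch of the underlying factorization contains a real error that would prevent the argument from closing. The degree-$12$ component of $\mathcal{R}(\mathcal{V})$ is a modular form of weight \emph{10}, not 14: the Witten-class factor contributes weight $6$ (one factor of $x_k\theta'(0,\tau)/\theta(x_k,\tau)$ per pair of Chern roots) and the single $E_8$ theta block contributes weight $4$. Accordingly the space is spanned by $E_4(\tau)E_6(\tau)=1-264q+\cdots$, not by $E_4^2E_6=1-24q+\cdots$. This matters directly for your point (iii): the constant $122$ comes from the $q^1$-coefficient comparison, which with $\varphi^8=1-8q+\cdots$ and $\ch(\mathcal{V})=1+\ch(W)q+\cdots$ yields $W+B_1-8+264=V_\CC(x)+T_\CC Z+244$ (using $B_1=T_\CC Z-12$ in the spin case); halving gives $122$. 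With your claimed $-24$ you would instead get $V_\CC(x)+T_\CC Z+4$, which is wrong. Also note that the identity is obtained by matching the $q^0$ \emph{and} $q^1$ coefficients (the former fixes the multiplier, the latter yields the formula), not the $q^0$-coefficient alone.
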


\begin{proof} Taking $\xi=\CC$ and $c=0$ in Theorem \ref{spinc-new}, one obtains (\ref{spin-newformula}). 
\end{proof}

Now suppose $Z$ has boundary and let $Y$ be the boundary of $Z$ with the induced spin structure. Let $g^{TZ}, g^{V(x)}$ be a Riemannian metric on $TZ$ and a Euclidean metric on $V(x)$ respectively. 
Let $\nabla^{TZ}$ be the Levi-Civita connection on $TZ$ and $\nabla^{V(x)}$ be a Euclidean connection on $V(x)$. $g^{TZ}, g^{V(x)}, \nabla^{TZ}, \nabla^{V(x)}$ induce the corresponding Hermitian metrics and connections on $T_\CC Z$ and $V_\CC(x)$, the complexifications. Assume all the involved metrics and connections are of product structures near $\partial Z=Y$. 

Let $D_Y$ be the Atiyah-Singer Dirac operator on $Y$. Let $\reta$ denote the reduced $\eta$-invariant in the sense of Atiyah-Patodi-Singer \cite{APS}. 

Denote by $p_i(\nabla^{TZ})$ the $i$-th Pontrjagin form of $(TZ, \nabla^{TZ})$ (c.f. \cite{Z}). Denote by $\lambda(\nabla^{TZ})$ the characteristic form $\frac{1}{2}p_1(\nabla^{TZ})$ and by $p(\nabla^{TZ})$ the characteristic form $\frac{1}{2}p_2(\nabla^{TZ})-\frac{1}{8}p_1(\nabla^{TZ})^2.$ 

In the following, when a connection appears in a bracket of a characteristic class, we always mean the corresponding characteristic form determined by this connection. 

Denote 
$$\widecheck x=\frac{1}{60}c_2(V_\CC(x), \nabla^{V_\CC(x)}).$$ 

Let $$C(\widecheck x)=\lambda(\nabla^{TZ})+2\widecheck x$$ and $$\widetilde C(\widecheck x)=\lambda(\nabla^{TZ})+\widecheck x.$$

As (\ref{WFH-mainformula}) and (\ref{spin-newformula}) hold on the level of forms, by the Atiyah-Patodi-Singer index theorem \cite{APS}, we have the following formulas. 
\begin{theorem} \label{spin-boundary}
\be \frac{1}{24}\int_Z C(\widecheck x)[p(\nabla^{TZ})-C(\widecheck x)^2]\equiv\reta(D_Y^{V_\CC(x)})+\frac{1}{2} \reta(D_Y^{T_\CC Z})-2\reta(D_Y) \ \ \ \!\!\!\!\!\!\mod\ZZ;
\ee
and
\be \frac{1}{24}\int_Z \widetilde C(\widecheck x)[\widetilde p(\nabla^{TZ})+6\lambda(\nabla^{TZ})\widetilde C(\widecheck x)-4\widetilde C(\widecheck x)^2]\equiv\frac{1}{2}\reta(D_Y^{V_\CC(x)})+\frac{1}{2} \reta(D_Y^{T_\CC Z})+122\reta(D_Y)\ \ \ \!\!\!\!\!\!\mod\ZZ.
\ee
\end{theorem}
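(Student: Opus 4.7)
My plan is to derive Theorem \ref{spin-boundary} by lifting the closed-manifold identities (\ref{WFH-mainformula}) and (\ref{spin-newformula}) from cohomology classes to equalities of differential forms, integrating over $Z$, and then invoking the Atiyah-Patodi-Singer index theorem.

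First, as indicated, the identities (\ref{WFH-mainformula}) and (\ref{spin-newformula}) are consequences of the factorization formulas (\ref{spinc-form1}) and (\ref{spinc-form2}) proved in Section \ref{proof}; these factorizations are algebraic identities among Chern-Weil representatives built from the chosen connections, so they actually hold as equalities of top-degree differential forms on $Z$, not merely cohomologically. Multiplying (\ref{WFH-mainformula}) by $2$ yields the form-level identity
\begin{equation*}
\frac{1}{24}\, C(\widecheck x)\bigl[p(\nabla^{TZ})-C(\widecheck x)^2\bigr] = \widehat{A}(\nabla^{TZ})\,\ch(\nabla^{V_\CC(x)}) + \frac{1}{2}\widehat{A}(\nabla^{TZ})\,\ch(\nabla^{T_\CC Z}) - 2\,\widehat{A}(\nabla^{TZ}),
\end{equation*}
and (\ref{spin-newformula}) yields the analogous form-level identity with right-hand coefficients $\frac{1}{2}, \frac{1}{2}, 122$.

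Next I would integrate both identities over $Z$. Since the metrics and connections are of product structure near $\partial Z = Y$, the Atiyah-Patodi-Singer theorem \cite{APS} gives
\begin{equation*}
\int_Z \widehat{A}(\nabla^{TZ})\,\ch(\nabla^E) = \ind(D^E_Z) + \reta(D_Y^E)
\end{equation*}
for any Hermitian bundle $E$ with connection that is likewise of product form near $Y$; applying this to $E = V_\CC(x)$, $E = T_\CC Z$, and $E = \CC$ (so that $D_Y^\CC = D_Y$) turns each term on the right of the integrated form-level identity into the corresponding reduced $\eta$-invariant plus an APS index.

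The only subtlety, and hence the main step to verify, is that the resulting congruences are modulo $\ZZ$ rather than modulo $\frac{1}{2}\ZZ$: the coefficient $\frac{1}{2}$ in front of $\ind(D_Z^{T_\CC Z})$ (and, for the second formula, the $\frac{1}{2}$ in front of $\ind(D_Z^{V_\CC(x)})$) must be absorbed into $\ZZ$. To handle this I would invoke the quaternionic structure of spinors in dimension $8k+4$: both $T_\CC Z$ and $V_\CC(x)$ are self-dual complex bundles (being complexifications of real bundles), so the kernels and cokernels of $D_Z \otimes T_\CC Z$ and $D_Z \otimes V_\CC(x)$ under APS boundary conditions inherit an $\mathbb{H}$-module structure, forcing the corresponding APS indices to be even. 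Absorbing these even integers into $\ZZ$ and gathering all contributions then produces both congruences of Theorem \ref{spin-boundary}.
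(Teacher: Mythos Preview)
Your proposal is correct and follows exactly the route the paper takes: the paper's entire justification is the one-line remark that (\ref{WFH-mainformula}) and (\ref{spin-newformula}) hold at the level of differential forms (as specializations of the factorization identities (\ref{spinc-form1}), (\ref{spinc-form2})) and then applies the Atiyah--Patodi--Singer index theorem. You are in fact more careful than the paper in explicitly justifying why the $\tfrac{1}{2}$-coefficients do not spoil the congruence $\!\!\mod \ZZ$, via the quaternionic structure on spinors in dimension $8k+4$ twisted by complexified real bundles; the paper leaves this parity point implicit.
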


\section{Cubic forms on spin$^c$ and spin$^{\omega_2}$ 12-manifolds} \label{sectionspinc}

In this section, we extend the Witten-Freed-Hopkins anomaly cancellation formulas to 12 dimensional spin$^c$ and spin$^{\omega_2}$ manifolds. 

\subsection{The original cubic forms} \label{originalcubic} Suppose $Z$ is a closed 12 dimensional smooth manifold not necessarily spin. Then the characteristic classes $\lambda$ and $p$ in the above section for the spin case do not necessarily exist. By abusing notations we simply denote $\frac{1}{2}p_1$ by $\lambda$, and $\frac{1}{2}\left(p_2-\frac{1}{4}p_1^2\right)$ by $p$ in $H^*(Z; \QQ)$. 

Let $x\in H^4(Z; \ZZ)$. Let $V(x)$ and $V_\CC(x)$ be the same meaning as introduced in the beginning of Section \ref{intro-spin}.

\subsubsection{Spin$^c$ case} Let $K$ be an $8k+4$ dimensional spin$^c$ manifold. Let $\xi$ be the complex line bundle of the spin$^c$ structure. Let $c=c_1(\xi)\in H^2(K; \ZZ).$ Let $U$ be a {\em characteristic submanifold} of the spin$^c$ structure, i.e. an orientable $8k+2$ dimensional submanifold of $K$ such that $[U]\in H_{8k+2}(K; \ZZ)$ is dual to $c$. $U$ carries a canonically induced spin structure up to spin cobordism. Let $D_U$ be the Atiyah-Singer spin Dirac operator on $U$. 

Denote $i:U\hookrightarrow K$ the embedding. Let $N$ be the normal bundle over $U$ in $K$ and $e\in H^2(U; \ZZ)$ the Euler class of $N$. Clearly $i^*TK\cong TU\oplus N$.

Let $E$ be a real vector bundle over $K$. Then $i^*E$ is a real vector bundle over the spin manifold $U$. Let $\ind_2(i^*E)$ be the mod 2 index in the sense of Atiyah–Singer \cite{ASV} associated to $i^*E$, which is a spin cobordism invariant. Let $E_\CC$ be the complexification of $E$. 

We have the following analytic Rokhlin congruence formula.
\begin{theorem}[\protect Zhang \cite{Z93, Z94, Z09}] \label{Rok}
\be  \label{Z1}
\left\langle  \widehat A(TK)\exp\left(\frac{c}{2}\right)\ch(E_\CC), [K]  \right\rangle\equiv\ind_2 (D_U^{i^*E})\ \ \ \!\!\!\!\!\!\mod2
\ee
and 
\be \label{Z2} \left\langle \widehat A(TK)\ch(E_\CC), [K]  \right\rangle\equiv\ind_2 (D_U^{i^*E})-\frac{1}{2}\left\langle \widehat A(TU)\ch(i^*E_\CC)\tanh\left(\frac{e}{4}\right), [U]\right\rangle \ \ \ \!\!\!\!\!\!\mod2.\ee

\end{theorem}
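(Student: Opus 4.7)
My strategy is to first prove (\ref{Z1}) analytically, and then derive (\ref{Z2}) from it by a Poincar\'e-duality manipulation. For (\ref{Z1}), by the Atiyah--Singer index theorem the left-hand side is the integer analytic index of the spin$^c$ Dirac operator on $K$ twisted by $E_\CC$, so what has to be proved is that the parity of this integer equals the $\ZZ/2$-valued spin mod $2$ index of $i^*E$ on the characteristic submanifold $U$. I would choose a Riemannian metric on $K$ and a spin$^c$ connection compatible with a smooth section $\tau$ of $\xi$ whose transverse zero locus is $U$, so that in a tubular neighborhood of $U$ everything becomes of product type with the oriented rank-two normal bundle $N$. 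Stretching the normal direction to $U$ in an adiabatic limit, the spin$^c$ Dirac operator on $K$ decouples into a family of Dirac-type operators on $U$ whose effective leading part is the spin Dirac operator $D_U^{i^*E}$. Because the complex index is constant under this deformation, one then has to match its parity with the $KO$-theoretic mod $2$ analytic index on $U$ through a spectral flow or $\eta$-invariant argument in the spirit of Zhang's work, or equivalently via the $KO$-theoretic wrong-way map from the spin$^c$ $K$-theory of $K$ to the spin $KO$-theory of $U$.

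Once (\ref{Z1}) is in hand, I would derive (\ref{Z2}) topologically. Writing
\[
\Delta := \left\langle \widehat A(TK) \exp\!\left(\tfrac{c}{2}\right) \ch(E_\CC), [K]\right\rangle - \left\langle \widehat A(TK) \ch(E_\CC), [K]\right\rangle = \left\langle \widehat A(TK)\bigl(\exp(c/2)-1\bigr)\ch(E_\CC), [K]\right\rangle,
\]
and using that $[U]$ is Poincar\'e dual to $c$, so that $\langle c^k \alpha, [K]\rangle = \langle e^{k-1} i^*\alpha, [U]\rangle$ for every $k\geq 1$ (with $e = i^*c$ the Euler class of $N$), I would apply this term by term to the power series $(\exp(c/2)-1)/c$. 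Combined with $i^* TK \cong TU \oplus N$, $\widehat A(N) = (e/2)/\sinh(e/2)$, and the algebraic identity
\[
\frac{e^{e/2} - 1}{2\sinh(e/2)} = \frac{1 + \tanh(e/4)}{2},
\]
this gives
\[
\Delta = \tfrac{1}{2} \left\langle \widehat A(TU) \ch(i^*E_\CC), [U]\right\rangle + \tfrac{1}{2} \left\langle \widehat A(TU) \tanh\!\left(\tfrac{e}{4}\right) \ch(i^*E_\CC), [U]\right\rangle.
\]
Since $i^*E$ is a real bundle, $\ch(i^*E_\CC)$ is a sum of $2\cosh(x_j)$'s and hence is supported in degrees $\equiv 0 \pmod 4$; the same holds for $\widehat A(TU)$. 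Their product therefore lives in degrees $\equiv 0 \pmod 4$, and since $\dim U = 8k+2 \equiv 2 \pmod 4$ its top-degree component vanishes, killing the first summand of $\Delta$. Substituting the remaining $\Delta = \tfrac{1}{2}\langle \widehat A(TU)\tanh(e/4)\ch(i^*E_\CC),[U]\rangle$ into (\ref{Z1}) yields (\ref{Z2}).

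The main obstacle is the analytic step for (\ref{Z1}): matching the mod $2$ reduction of the complex spin$^c$ Dirac index on $K$ with the real spin mod $2$ index on $U$. One must fix the canonical spin structure on the characteristic submanifold $U$ (well-defined up to spin cobordism), effect a $\ZZ$-index to $\ZZ/2$-index comparison, and control the parity of kernel dimensions through the adiabatic / spectral flow argument. Once this analytic step is settled, the Poincar\'e-duality derivation of (\ref{Z2}) is a short algebraic manipulation.
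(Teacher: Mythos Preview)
The paper does not give its own proof of this theorem: it is stated as a result of Zhang with references \cite{Z93, Z94, Z09} and then used as a black box. So there is no in-paper argument to compare against.

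That said, your derivation of (\ref{Z2}) from (\ref{Z1}) is correct and is exactly the computation carried out in Zhang's papers (see e.g.\ \cite{Z09}): Poincar\'e duality along $c$, the splitting $i^*TK\cong TU\oplus N$ with $\widehat A(N)=\dfrac{e/2}{\sinh(e/2)}$, the identity $\dfrac{e^{e/2}-1}{2\sinh(e/2)}=\dfrac{1+\tanh(e/4)}{2}$, and the vanishing of $\langle \widehat A(TU)\ch(i^*E_\CC),[U]\rangle$ on a $(8k+2)$-manifold because the integrand lives in degrees divisible by $4$. Nothing to fix there.

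For (\ref{Z1}) your outline points in the right direction but is, as you acknowledge, only a plan. The actual proof in \cite{Z93, Z94} does not proceed by an adiabatic decoupling of the spin$^c$ Dirac operator on $K$; rather, one realizes $K$ as the boundary of the disc bundle of $\xi$ over $K$ (or uses the associated circle bundle), applies the Atiyah--Patodi--Singer index theorem there, and reads off the parity via the $\eta$-invariant of the induced operator, which in turn is identified with the mod $2$ index on $U$ through the spin structure canonically determined by the characteristic-submanifold construction. Your ``stretch the normal direction and decouple'' picture, taken literally, does not by itself produce a $\ZZ$-index versus $\ZZ/2$-index comparison; the bridge is the $\eta$-invariant computation, and that is the substantive content you would still have to supply.
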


Combining (\ref{Z2}) with Theorems \ref{WFH-main} and \ref{spin-new}, we have
\begin{theorem}\label{spinc-mod2theorem} If $Z$ is a 12 dimensional closed smooth spin$^c$ manifold and $U$ is a characteristic submanifold, then the following identities hold,
\be \label{spinc-mod2-1} 
\begin{split}
&\left\langle  \frac{C(x)[p-C(x)^2]}{12} , [Z]  \right\rangle\\
=&\left\langle  2\widehat A(TZ)\ch (V_\CC(x))+\widehat A(TZ)\ch (T_\CC Z)-4\widehat A(TZ), [Z] \right\rangle\\
\equiv&\ind_2 (D_U^{TU})+\ind_2 (D_U^{N})-\frac{1}{2}\left\langle \widehat A(TU)\ch\left(2i^*V_\CC(x)+T_\CC U+N_\CC-4\right)\tanh\left(\frac{e}{4}\right), [U]\right\rangle \ \ \ \!\!\!\!\!\!\mod2;
\end{split}
\ee
and
\be \label{spinc-mod2-2}
\begin{split} 
&\left\langle  \frac{\widetilde C(x)[\widetilde p+6\lambda\widetilde C(x)-4\widetilde C(x)^2]}{12} , [Z]       \right\rangle\\
=&\left\langle \widehat A(TZ)\ch (V_\CC(x))+\widehat A(TZ)\ch (T_\CC Z)+244\widehat A(TZ), [Z] \right\rangle\\
\equiv&\ind_2 (D_U^{i^*V(x)})+\ind_2 (D_U^{TU})+\ind_2 (D_U^{N})\\
&-\!\frac{1}{2}\!\left\langle \widehat A(TU)\ch(i^*V_\CC(x)+T_\CC U+N_\CC+244)\tanh\left(\frac{e}{4}\right), [U]\right\rangle \ \ \ \!\!\!\!\!\!\mod2.
\end{split}
\ee
\end{theorem}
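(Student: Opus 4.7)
The plan is to derive each of the two identities by combining the relevant spin-case anomaly formula with the analytic Rokhlin congruence (\ref{Z2}) from Theorem \ref{Rok}, applied to a carefully chosen virtual real vector bundle over $Z$. The first equality in each identity (between the cubic form and the $\widehat{A}$-integral) is purely algebraic: I multiply (\ref{WFH-mainformula}) by $4$ to get the first equality in (\ref{spinc-mod2-1}), and multiply (\ref{spin-newformula}) by $2$ to get the first equality in (\ref{spinc-mod2-2}). The real work is therefore concentrated in producing the mod $2$ congruence on the third line of each identity.

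For (\ref{spinc-mod2-1}), I take the virtual real bundle $E = 2V(x) + TZ - 4$ over $Z$ and apply (\ref{Z2}). By linearity of the Chern character the left-hand side of (\ref{Z2}) matches the middle expression exactly. Using the canonical splitting $i^*TZ \cong TU \oplus N$ and the additivity of the mod $2$ index under direct sums gives
\begin{equation*}
\ind_2(D_U^{i^*E}) = 2\,\ind_2(D_U^{i^*V(x)}) + \ind_2(D_U^{TU}) + \ind_2(D_U^N) - 4\,\ind_2(D_U),
\end{equation*}
and the even coefficients vanish modulo $2$, leaving exactly $\ind_2(D_U^{TU}) + \ind_2(D_U^N)$. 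The $\tanh(e/4)$ correction integral is \emph{not} reduced mod $2$, so by linearity of $\ch$ it reassembles into the stated integral $\frac{1}{2}\langle \widehat{A}(TU)\ch(2i^*V_\CC(x) + T_\CC U + N_\CC - 4)\tanh(e/4),[U]\rangle$ without further change.

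For (\ref{spinc-mod2-2}), I instead take $E = V(x) + TZ + 244$. The coefficient of $V(x)$ is now odd, so $\ind_2(D_U^{i^*V(x)})$ survives the mod $2$ reduction, while $244 = 2\cdot 122$ kills the trivial-bundle contribution, and as before $i^*TZ \cong TU \oplus N$ produces $\ind_2(D_U^{TU}) + \ind_2(D_U^N)$. Reading off the correction term via linearity of $\ch$ yields the stated integral over $U$, completing the identity.

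The entire argument is bookkeeping once Theorems \ref{WFH-main}, \ref{spin-new}, and \ref{Rok} are in hand; there is no genuine analytic obstacle. The only point requiring care is to keep the mod $2$ reduction (which acts on the $\ind_2$ terms via additivity and annihilates even multiples) conceptually separate from the $\tanh(e/4)$ correction integral, which lives in $\QQ$ and therefore preserves every integer coefficient in the virtual bundle $E$ verbatim.
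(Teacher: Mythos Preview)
Your proof is correct and follows exactly the route the paper takes: the paper simply says ``Combining (\ref{Z2}) with Theorems \ref{WFH-main} and \ref{spin-new}'' and leaves the bookkeeping implicit, while you have spelled out the choice of virtual real bundle $E$ and the mod $2$ simplifications. The only point worth making explicit is that the first equality in each display (cubic form $=$ $\widehat{A}$-integral) holds for a merely spin$^c$ $Z$ because it is an identity of rational characteristic classes derived from the forms-level factorization formulas (\ref{spinc-form1}), (\ref{spinc-form2}) with $c=0$; the paper sets this up at the start of Section~\ref{originalcubic} by redefining $\lambda=\tfrac{1}{2}p_1$ and $p=\tfrac{1}{2}(p_2-\tfrac{1}{4}p_1^2)$ in $H^*(Z;\QQ)$.
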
 

$\, $

\subsubsection{Spin$^{\omega_2}$ case} We call an $8k+4$ dimensional closed smooth oriented manifold $K$ a {\em spin$^{\omega_2}$ manifold} if there exists a rank 2 nonorientable real vector bundle $\xi$ such that $\omega_2(TK)=\omega_2(\xi)$. Such manifolds and the corresponding Rokhlin congruence have been studied in \cite{Z94} and \cite{Z17}. Let $U$ be a {\em characteristic submanifold} of the spin$^{\omega_2}$ structure, i.e., a nonorientable $8k+2$ dimensional submanifold of $K$ such that $[U]\in H_{8k+2}(K; \ZZ/2\ZZ)$ is dual to $\omega_2(TK)\in H^2(K; \ZZ/2\ZZ)$. $U$ carries a canonically induced pin$^-$ structure up to pin$^-$ cobordism. 

Denote $i:U\hookrightarrow K$ the embedding. Let $N$ be the normal bundle over $U$ in $K$ and $e\in H^2(U, o(TU))$ the Euler class of $N$.  

Let $E$ be a real vector bundle over $K$. Then $i^*E$ is a real vector bundle over the pin$^-$ manifold $U$. Let $\ind_2^{a}(i^*E)$ be the mod 2 analytic index of the real vector bundle $i^*E$ over $U$, which is defined via $\eta$-invariants (c.f. \cite{Z17}). We have the following Rokhlin congruence formula. 
\begin{theorem}[\protect Zhang (Theorem A.2 in \cite{Z17})] \label{Rok-pin-}
\be \label{Zpin-}
\left\langle \widehat A(TK)\ch(E_\CC), [K]  \right\rangle\equiv\ind^a_2 (i^*E)-\frac{1}{2}\left\langle \widehat A(TU)\ch(i^*E_\CC)\tanh\left(\frac{e}{4}\right), [U]\right\rangle \ \ \ \!\!\!\!\!\!\mod2.\ee

\end{theorem}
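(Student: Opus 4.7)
The plan is to deduce Theorem \ref{Rok-pin-} by adapting the Rokhlin congruence techniques developed in \cite{Z93,Z94,Z09} for the spin and spin$^c$ cases to the nonorientable spin$^{w_2}$ setting. The first step is to establish that $U$ acquires a canonical pin$^-$ structure, well defined up to pin$^-$ cobordism. Since $[U]$ is Poincar\'e dual to $w_2(TK)=w_2(\xi)\in H^2(K;\ZZ/2\ZZ)$ and $K$ is oriented (so $w_1(TK)=0$), the decomposition $i^*TK\cong TU\oplus N$ together with the Whitney sum formula forces Stiefel-Whitney relations among $TU$, $N$, and $\xi|_U$ whose net consequence is the vanishing of the pin$^-$ obstruction $w_2(TU)+w_1(TU)^2$. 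This permits the definition of $\ind_2^a(i^*E)$ via $\eta$-invariants as in \cite{Z17}: one pulls back to the orientation double cover $\widetilde U\to U$ where a genuine spin structure exists, forms the twisted Dirac operator $D_{\widetilde U}^{i^*E}$, and extracts its mod $2$ invariant as the $\ZZ/2$-reduction of $\reta$ under the deck involution.

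Second, the congruence is established by tubular-neighborhood localization near $U$. Write $K=D(N)\cup_{S(N)}(K\setminus\mathring{D}(N))$; the hypothesis that $[U]$ is Poincar\'e dual to $w_2(TK)$ implies $w_2(TK)$ vanishes on $K\setminus\mathring{D}(N)$, which therefore inherits a spin structure. By the Atiyah-Patodi-Singer index theorem on this spin piece,
\be
\int_{K\setminus\mathring{D}(N)}\widehat A(TK)\ch(E_\CC)\equiv \ind\bigl(D^{i^*E}_{K\setminus\mathring{D}(N)}\bigr)-\reta\bigl(D^{i^*E}_{S(N)}\bigr)\pmod{\ZZ}.
\ee
On the disk-bundle side, fiber integration along $\pi:D(N)\to U$ together with the classical identity for $\pi_*\widehat A$ in the Gysin sequence produces the $\tanh(e/4)$ factor, and the factor $\frac{1}{2}$ arises from the sphere bundle being a \emph{two}-sheeted truncation of the disk bundle (as in the spin$^c$ proof of (\ref{Z2})).

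Finally, one analyzes the adiabatic limit of $\reta(D^{i^*E}_{S(N)})$ as the fibers of $S(N)\to U$ shrink. The limit splits into a purely topological piece, which combines with the disk-bundle contribution to reproduce the correction $-\frac{1}{2}\langle \widehat A(TU)\ch(i^*E_\CC)\tanh(e/4),[U]\rangle$, plus a $\frac{1}{2}\ZZ/\ZZ$-valued ``arithmetic'' piece that, after reduction through the deck involution on $\widetilde U$, matches $\ind_2^a(i^*E)$ modulo $\ZZ$. The main obstacle will be precisely this adiabatic-limit analysis in the nonorientable regime: one must carefully track the orientation-bundle twists attached to $w_1(TU)\ne 0$, verify that the spin-double-cover computation descends $\ZZ/2$-equivariantly without introducing mod $\ZZ$ ambiguities, and reconcile the signs and normalizations so that the final congruence holds mod $2$ rather than only mod $1$. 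This is the technical core of \cite{Z17} and is what distinguishes the $pin^-$ case from the more straightforward orientable spin$^c$ case.
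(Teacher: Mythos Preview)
The paper does not prove Theorem~\ref{Rok-pin-}. It is stated as a quotation from Zhang's earlier work, explicitly attributed as ``Theorem A.2 in \cite{Z17}'', and is used as a black box input to derive Theorem~\ref{spinw2-mod2theorem} by combining (\ref{Zpin-}) with Theorems~\ref{WFH-main} and~\ref{spin-new}. There is therefore no proof in the present paper to compare your proposal against.

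Your sketch is a plausible outline of the strategy one would expect from \cite{Z93,Z94,Z09,Z17}: tubular-neighborhood decomposition, APS on the spin complement, and an adiabatic-limit analysis of the $\eta$-invariant on the circle bundle $S(N)\to U$ to extract both the $\tanh(e/4)$ correction and the mod~$2$ analytic index. Whether the details---in particular the pin$^-$ descent of the $\eta$-analysis and the mod~$2$ (rather than mod~$1$) sharpening---are handled exactly as you describe would have to be checked against \cite{Z17} itself, not the present paper.
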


Combining (\ref{Zpin-}) with Theorem \ref{WFH-main} and Theorem \ref{spin-new}, we have

\begin{theorem}\label{spinw2-mod2theorem} If $Z$ is a 12 dimensional closed smooth spin$^{\omega_2}$ manifold and $U$ is a characteristic submanifold, then the following identities hold,
\be \label{spinw2-mod2-1} 
\begin{split}
&\left\langle  \frac{C(x)[p-C(x)^2]}{12} , [Z]  \right\rangle\\
=&\left\langle  2\widehat A(TZ)\ch (V_\CC(x))+\widehat A(TZ)\ch (T_\CC Z)-4\widehat A(TZ), [Z] \right\rangle\\
\equiv&\ind_2^a (TU)+\ind_2^a (N)-\frac{1}{2}\left\langle \widehat A(TU)\ch\left(2i^*V_\CC(x)+T_\CC U+N_\CC-4\right)\tanh\left(\frac{e}{4}\right), [U]\right\rangle \ \ \ \!\!\!\!\!\!\mod2;
\end{split}
\ee
and
\be \label{spinw2-mod2-2}
\begin{split} 
&\left\langle  \frac{\widetilde C(x)[\widetilde p+6\lambda\widetilde C(x)-4\widetilde C(x)^2]}{12} , [Z]       \right\rangle\\
=&\left\langle \widehat A(TZ)\ch (V_\CC(x))+\widehat A(TZ)\ch (T_\CC Z)+244\widehat A(TZ), [Z] \right\rangle\\
\equiv&\ind_2^a(i^*{V(x)})\!+\!\ind_2^a (TU)\!+\!\ind_2^a (N)\!\\
&-\!\frac{1}{2}\!\left\langle \widehat A(TU)\ch(i^*V_\CC(x)+T_\CC U+N_\CC+244)\tanh\left(\frac{e}{4}\right), [U]\right\rangle \ \ \ \!\!\!\!\!\!\mod2.
\end{split}
\ee
\end{theorem}

\subsection{New cubic forms on spin$^c$ manifolds} \label{spinc-newcubic}
Let $Z$ be a 12 dimensional closed smooth spin$^c$ manifold. Let $\xi$ be the complex line bundle of the spin$^c$ structure. We use $\xi_\RR$ for the notation of $\xi$ when it is viewed as an oriented real plane bundle. Let $c=c_1(\xi)\in H^2(Z; \ZZ).$ Denote $\xi_\CC=\xi_\RR\otimes_\RR \CC$. Clearly $\xi_\CC=\xi\oplus \overline \xi$. 

Since $Z$ is spin$^c$, $TZ\oplus \xi_\RR$ is spin. By a result of McLaughlin (Lemma 2.2, \cite{M}), there is a canonical class $\rho_c\in H^4(Z; \ZZ)$ associated to the spin$^c$ structure such that 
$$2\rho_c=p_1(TZ\oplus \xi_\RR)\in H^4(Z; \ZZ).$$ 
However,
\be \begin{split} &p_1(TZ\oplus \xi_\RR)\\
=&-c_2((TZ\oplus \xi_\RR)\otimes_\RR \CC)\\
=&-c_2(T_\CC Z)-c_2(\xi\oplus \overline \xi)-c_1(T_\CC Z)c_1(\xi\oplus \overline \xi)\\
=&p_1(TZ)+c^2.\end{split} \ee
So $$2\rho_c=p_1(TZ)+c^2.$$

Let $$\lambda_c:=\rho_c-2c^2\in H^4(Z; \ZZ)$$ and $$C_c(x)=\lambda_c+2x\in H^4(Z; \ZZ).$$

\begin{theorem} \label{spinc-main}There is a degree 8 integral class $p_c$ such that 
\be \label{spinc-class}
\begin{split}
&p_c\equiv \omega_8 ~({\rm mod}~2),\\
&8p_c=4p_2-p_1^2-6p_1c^2+39c^4.
\end{split}
\ee
Moreover, the following identity holds, 
\be \label{spinc-mainformula}
\begin{split}
&\left\langle  \frac{C_c(x)[p_c-C_c(x)^2]}{24} , [Z]  \right\rangle\\
=&\left\langle   \widehat A(TZ)e^{c/2}\ch (V_\CC(x))+\frac{1}{2}\widehat A(TZ)e^{c/2}\ch (T_\CC Z)-\frac{1}{2}\widehat A(TZ)e^{c/2}\ch[\xi_\CC\otimes \xi_\CC-\xi_\CC+2], [Z] \right\rangle.
\end{split}
\ee
\end{theorem}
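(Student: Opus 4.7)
My plan is to derive both the existence of the class $p_c$ and the identity (\ref{spinc-mainformula}) from a single factorization formula obtained by modular invariance on the loop space, following the scheme outlined in the introduction.

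First, I would introduce the twisted generalized Witten class
$$\mathcal{Q}_c(\mathcal{V}_1,\mathcal{V}_2)=e^{\frac{1}{24}E_2(\tau)\cdot(\frac{1}{30}(c_2(W_1)+c_2(W_2)))}\mathcal{W}_c(TZ)\varphi(\tau)^{16}\ch(\mathcal{V}_1)\ch(\mathcal{V}_2),$$
built from two principal $E_8$-bundles of classes $x_1,x_2\in H^4(Z;\ZZ)$, and show that its degree-$12$ component, evaluated on $[Z]$, is a modular form of weight $14$ for $SL(2,\ZZ)$ in $\tau$. The modularity relies on three ingredients: the generalized Witten class $\mathcal{W}_c(TZ)$ has its $E_2$ anomaly cancelled by the prefactor involving $p_1(TZ)-3c^2$; the Dedekind product $\varphi(\tau)^{16}$ contributes weight $8$; and the characters of the basic level-$1$ representation of affine $E_8$, after regularization by $E_2(\tau)c_2(W_i)/30$, are modular of weight $4$ for $SL(2,\ZZ)$. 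The weights add to $14$ and no residual theta characteristic remains.

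Since $M_{14}(SL(2,\ZZ))$ is one-dimensional and spanned by $E_4(\tau)^2E_6(\tau)$, the evaluated modular form is determined by any two of its Fourier coefficients, and comparing the $q^0$ terms on both sides of the resulting equality yields the factorization formula (\ref{spinc-form1}). Specialising to $\mathcal{V}_1=\mathcal{V}(x)$ with $\mathcal{V}_2$ the trivial summand of the basic representation, and invoking $\ch(V_\CC(x))=248-60x+6x^2-\frac{1}{3}x^3$ together with $x=\frac{1}{60}c_2(V_\CC(x))$, and then collecting powers of $x$, produces exactly the identity (\ref{spinc-mainformula}); the combination $\xi_\CC\otimes\xi_\CC-\xi_\CC+2$ on the right-hand side arises from the $e^{c/2}$ factor and the $q^0$-piece of $\Theta(T_\CC Z,\xi_\CC)$ (since $\xi_\CC$ must be subtracted off in the Witten bundle to make it well-defined).

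For the existence of $p_c$, the right-hand side of (\ref{spinc-mainformula}) is a sum of indices of twisted spin$^c$ Dirac operators on $Z$, with two explicit $\tfrac12$-factors. The first summand is manifestly integral, and the Atiyah--Hirzebruch mod-$2$ divisibility for spin$^c$ manifolds in dimension $8k+4$, applied to the virtual bundle $T_\CC Z-\xi_\CC\otimes\xi_\CC+\xi_\CC-2$, shows that the other two pieces together are also integral. Consequently the rational cohomological quantity $\langle C_c(x)((4p_2-p_1^2-6p_1c^2+39c^4)/8-C_c(x)^2)/24,[Z]\rangle$ is an integer for every $x\in H^4(Z;\ZZ)$; varying $x$ pins down an integral lift $p_c\in H^8(Z;\ZZ)$ with $8p_c=4p_2-p_1^2-6p_1c^2+39c^4$. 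The congruence $p_c\equiv w_8\pmod 2$ then follows from reducing mod $2$ and applying the Wu formula together with the spin$^c$ Wu relations, as in the spin argument of Freed--Hopkins.

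The main obstacle is establishing the modularity cleanly. The bookkeeping of the three $E_2$-anomaly corrections (from $p_1(TZ)$, $c^2$, and the two $c_2(W_i)$) against $\varphi(\tau)^{16}$ and the $E_8$ character has to be organised so that the total weight is exactly $14$ for the full group $SL(2,\ZZ)$, not merely a congruence subgroup; this is what feeds the one-dimensional modular forms argument and ultimately forces the particular numerical coefficients on both sides. Once modularity is secured, the rest amounts to routine $q$-expansions of Eisenstein series and the $E_8$ basic character.
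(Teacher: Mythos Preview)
Your overall strategy---build the twisted generalized Witten class $\mathcal{Q}_c(\mathcal{V}_1,\mathcal{V}_2)$, prove its degree-$12$ piece is a weight-$14$ modular form for $SL(2,\ZZ)$, and then exploit $\dim M_{14}=1$ to force a factorization---matches the paper's method. However, two steps in your execution are not right.

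\textbf{Wrong specialisation.} To obtain the prefactor $C_c(x)=\lambda_c+2x$ in (\ref{spinc-mainformula}) you must take the \emph{same} $E_8$ bundle twice, i.e.\ set $W_1=W_2=V_\CC(x)$. In the factorized degree-$4$ factor one gets
\[
\frac{p_1(TZ)-3c^2}{2}+\frac{1}{30}\bigl(c_2(W_1)+c_2(W_2)\bigr)\cdot\frac{1}{2}
=\lambda_c+2x
\]
only when both copies contribute $c_2(V_\CC(x))=60x$. If instead you take $\mathcal{V}_2$ trivial (so $c_2(W_2)=0$), the prefactor becomes $\lambda_c+x=\widetilde C_c(x)$ and you land on the formula of Theorem~\ref{spinc-new}, not Theorem~\ref{spinc-main}. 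The left-hand side bundle content $2V_\CC(x)+T_\CC Z-\cdots$ (which after halving gives the single $V_\CC(x)$ in (\ref{spinc-mainformula})) likewise requires the diagonal specialisation.

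\textbf{Existence of $p_c$.} Your argument here does not work. First, the right-hand side of (\ref{spinc-mainformula}) is in general only a half-integer, not an integer: there is no ``Atiyah--Hirzebruch mod-$2$ divisibility for spin$^c$ manifolds'' forcing $\langle\widehat A\,e^{c/2}\ch(T_\CC Z-\xi_\CC\otimes\xi_\CC+\xi_\CC-2),[Z]\rangle$ to be even (indeed Theorem~\ref{B} shows the doubled quantity is governed by nontrivial mod-$2$ indices). Second, even granting integrality of all such pairings on a fixed $Z$, that does not produce an integral lift of the rational class $(4p_2-p_1^2-6p_1c^2+39c^4)/8$; you would only constrain its evaluations. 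The paper instead proves existence and the mod-$2$ reduction of $p_c$ directly in $H^\ast(BSpin^c;\ZZ)$ using Duan's spin$^c$ classes $q_i$: from $p_1=2q_1+c^2$ and $p_2=2q_2+q_1^2$ one computes $4p_2-p_1^2-6p_1c^2+39c^4=8(q_2-2q_1c^2+4c^4)$, so $p_c=q_2-2q_1c^2+4c^4$ is integral with $p_c\equiv q_2\equiv w_8\pmod 2$. This is Theorem~\ref{pc}, and it is logically independent of the index identity.
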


The existence of $p_c\in H^8(Z; \ZZ)$ is proved in Theorem \ref{pc}. Let $$\widetilde p_c=p_c-3\lambda_c^2.$$ In Theorem \ref{pc}, it is also shown that 
\be 
\begin{split}
&\widetilde p_c\equiv \omega_8+\omega_4^2+\omega_2^4 ~({\rm mod}~2),\\
&8\widetilde p_c=4p_2-7p_1^2+30p_1c^2-15c^4.
\end{split}
\ee

Let $$\widetilde C_c(x)=\lambda_c+x\in H^4(Z; \ZZ).$$
\begin{theorem} \label{spinc-new} The following identity holds, 
\be \label{spinc-newformula}
\begin{split}
&\left\langle  \frac{\widetilde C_c(x)[\widetilde p_c+6\lambda_c\widetilde C_c(x)-4\widetilde C_c(x)^2]}{12} , [Z] \right\rangle\\
=&\left\langle   \widehat A(TZ)e^{c/2}\ch (V_\CC(x))+\widehat A(TZ)e^{c/2}\ch (T_\CC Z)+\widehat A(TZ)e^{c/2}\ch[-\xi_\CC\otimes \xi_\CC+\xi_\CC+246], [Z] \right\rangle.
\end{split}
\ee

\end{theorem}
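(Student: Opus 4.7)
The plan is to derive Theorem \ref{spinc-new} from the factorization formula (\ref{spinc-form2}), which I would establish via the modularity of the twisted generalized Witten class $\mathcal{R}_c(\mathcal{V})$. The strategy parallels the one for Theorem \ref{spinc-main}, but uses the single-$E_8$ construction $\mathcal{R}_c(\mathcal{V})$ in place of the bilinear $\mathcal{Q}_c(\mathcal{V}_1,\mathcal{V}_2)$. The heuristic identity $\mathcal{R}(\mathcal{V}) = \sqrt{\mathcal{Q}(\mathcal{V},\mathcal{V})\cdot \mathcal{W}(TZ)}$ pointed out in the introduction clarifies why the cubic form $\widetilde f_{\lambda_c,p_c}/12$ is paired with this particular Witten class; it is compatible with the algebraic relation $\widetilde f_{a,b}(x) = \tfrac{1}{2}(f_{a,b}(2x)+f_{a,b}(0))$.

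The first step is to construct $\mathcal{V}\in K(Z)[[q]]$ from the basic representation of the affine $E_8$ algebra associated with $x$, and to expand $\mathcal{R}_c(\mathcal{V})$ in terms of Jacobi theta functions of the formal Chern roots of $T_\CC Z$ and of the class $x$. The exponential $E_2$-correction factors with arguments $\tfrac{1}{24}(p_1(TZ)-3c^2)$ and $\tfrac{1}{720}c_2(W)$ are precisely calibrated to absorb the non-modular term in the $S$-transform of $E_2$; together with the $\varphi(\tau)^8\ch(\mathcal{V})$ piece (which, via the affine $E_8$ character, contributes an $E_4(\tau)$-type factor), the degree-$12$ component of $\mathcal{R}_c(\mathcal{V})$ becomes a holomorphic modular form of weight $14$ over $SL(2,\ZZ)$ with no cusp singularity.

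Since the space of weight-$14$ modular forms over $SL(2,\ZZ)$ is one-dimensional and spanned by $E_4^2(\tau) E_6(\tau)$, this degree-$12$ component must equal a universal characteristic number of $Z$ times $E_4^2(\tau)E_6(\tau)$. Matching $q$-expansions at the cusp $\tau\to i\infty$ — using the standard expansions $E_4 = 1+240\sum \sigma_3(n)q^n$, $E_6 = 1-504\sum \sigma_5(n)q^n$, $E_2 = 1-24\sum \sigma_1(n)q^n$, together with the expansions of the theta quotients encoding $\widehat A(TZ)$, $\Theta(T_\CC Z,\xi_\CC)$, and $\ch(\mathcal{V})$ — yields the factorization formula (\ref{spinc-form2}) as an identity of characteristic forms on $Z$.

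Finally, extracting the $q^0$-coefficient and pairing with $[Z]$ produces Theorem \ref{spinc-new}, after the algebraic rewriting uses $\ch(V_\CC(x)) = 248 - 60x + 6x^2 - \tfrac{1}{3}x^3$ together with $60x=c_2(V_\CC(x))$, and the defining relations $8 p_c = 4p_2 - p_1^2 - 6p_1 c^2 + 39 c^4$ and $\widetilde p_c = p_c - 3\lambda_c^2$ from Theorem \ref{pc}. The main obstacle I expect is the modularity verification: one must track the $S$- and $T$-transformations of the theta expression for $T_\CC Z$, of the spin$^c$ twist $\Theta(T_\CC Z,\xi_\CC)$, of the affine $E_8$ character and of $\varphi(\tau)^8$, and check that the quasi-modular anomalies from $E_2$ cancel exactly against the exponential corrections. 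It is precisely here that the coefficient $-3c^2$ in the exponent of $\mathcal{W}_c(TZ)$ enters in an essential way, distinguishing the spin$^c$ case from the pure spin case.
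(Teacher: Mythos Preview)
Your overall strategy matches the paper's proof, but there is one concrete error that would derail the computation: the modular weight of the degree-$12$ component of $\mathcal{R}_c(\mathcal{V})$ is $10$, not $14$. The $\widehat A$-theta quotient for a $12$-manifold contributes weight $6$, the $\xi$-factors $\theta_i(u,\tau)/\theta_i(0,\tau)$ have weight $0$, and the single affine $E_8$ character $\varphi(\tau)^8\ch(\mathcal{V})$ contributes weight $4$; the total is $10$. It is the bilinear construction $\mathcal{Q}_c(\mathcal{V}_1,\mathcal{V}_2)$, with two $E_8$ factors, that has weight $14$.

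This matters for the numerics: the relevant one-dimensional space of level-$1$ modular forms is spanned by $E_4(\tau)E_6(\tau)=1-264q+\cdots$, not by $E_4^2(\tau)E_6(\tau)=1-24q+\cdots$. Comparing the $q^0$ and $q^1$ coefficients with $-264$ yields $\ch(W_i+B_1+256)$ on the left of the factorization identity; after substituting $B_1=T_\CC Z-12-3\widetilde{\xi_\CC}-\widetilde{\xi_\CC}\otimes\widetilde{\xi_\CC}$ this becomes $T_\CC Z+244-3\widetilde{\xi_\CC}-\widetilde{\xi_\CC}\otimes\widetilde{\xi_\CC}=T_\CC Z+246+\xi_\CC-\xi_\CC\otimes\xi_\CC$, which is exactly the combination appearing on the right-hand side of (\ref{spinc-newformula}). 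Had you used $-24$ you would obtain $+16$ in place of $+256$, and the resulting formula would not match the theorem. With the weight corrected, the rest of your outline agrees with the paper's argument.
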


These two theorems are consequences of the factorization formulas for degree 12 characteristic forms: (\ref{spinc-form1}) and (\ref{spinc-form2}) (proved in Section \ref{proof-spinc}) and a direct computation of the degree 8 components in (\ref{spinc-form1}) and (\ref{spinc-form2}).

$\, $

Let $U$ be a characteristic submanifold of $Z$. Let $N$ be the normal bundle over $U$ in $Z$. Applying the Rokhlin congruence formula (\ref{Z1}), we have

\begin{theorem}\label{B} The following identities hold, 
\be \label{spinc-B-1}
\left\langle  \frac{C_c(x)[p_c -C_c(x)^2]}{12} , [Z]  \right\rangle\equiv\ind_2 (D_U^{TU})+\ind_2 (D_U^{N\otimes N})\ \ \ \!\!\!\!\!\!\mod2;
\ee
and
\be \label{spinc-B-2}
\left\langle  \frac{\widetilde C_c(x)[\widetilde p_c+6\lambda_c\widetilde C_c(x)-4\widetilde C_c(x)^2]}{12} , [Z] \right\rangle\equiv \ind_2 (D_U^{TU})+\ind_2 (D_U^{N\otimes N})+\ind_2(D_U^{i^*V(x)})\ \ \ \!\!\!\!\!\!\mod2.
\ee

\end{theorem}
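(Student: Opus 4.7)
The plan is to deduce both congruences directly from Theorems \ref{spinc-main} and \ref{spinc-new} by applying the Rokhlin congruence (\ref{Z1}) of Theorem \ref{Rok} termwise to each right-hand side. Since (\ref{spinc-B-1}) has denominator $12$ while Theorem \ref{spinc-main} is stated with denominator $24$, I first multiply that identity by $2$; Theorem \ref{spinc-new} already has denominator $12$. Each resulting summand is then of the form $\left\langle \widehat A(TZ)\,e^{c/2}\,\ch(E_\CC),[Z]\right\rangle$ for some virtual real vector bundle $E$ on $Z$, and (\ref{Z1}) reduces this integer mod $2$ to $\ind_2(D_U^{i^*E})$.

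The geometric input I need consists of three identifications on $U$. First, $i^*TZ\cong TU\oplus N$, which via additivity of the Chern character and of twisted mod $2$ indices converts the $T_\CC Z$-summand into $\ind_2(D_U^{TU})+\ind_2(D_U^N)$. Second, the normal bundle of a characteristic submanifold satisfies $N\cong i^*\xi_\RR$; this is the standard consequence of constructing $U$ as the zero set of a transverse section of $\xi_\RR$. Third, one has $\xi_\CC\otimes\xi_\CC=(\xi_\RR\otimes_\RR\xi_\RR)_\CC$ since $(\xi_\RR\otimes_\RR\xi_\RR)\otimes_\RR\CC$ equals $\xi_\CC\otimes_\CC\xi_\CC$, so $\xi_\CC\otimes\xi_\CC-\xi_\CC+2$ is the complexification of the virtual real bundle $\xi_\RR\otimes_\RR\xi_\RR-\xi_\RR+2\RR$, whose pullback by $i$ is $N\otimes N-N+2\RR$. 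Applying (\ref{Z1}) thus turns the $\xi$-term into $\ind_2(D_U^{N\otimes N})-\ind_2(D_U^N)+2\ind_2(D_U)\pmod 2$.

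The rest is mod-$2$ arithmetic. In the rescaled Theorem \ref{spinc-main} the $V_\CC(x)$-term has coefficient $2$, so it contributes $2\ind_2(D_U^{i^*V(x)})\equiv 0$; the $\ind_2(D_U^N)$ contributions coming from $T_\CC Z$ and from the $\xi$-term cancel in pairs; and the overall sign $-1$ on the $\xi$-contribution flips to $+1$ mod $2$. Collecting leaves $\ind_2(D_U^{TU})+\ind_2(D_U^{N\otimes N})$, which is (\ref{spinc-B-1}). For Theorem \ref{spinc-new} the $V_\CC(x)$-coefficient is $1$, so $\ind_2(D_U^{i^*V(x)})$ survives, and the constant $246$ is even, so $246\,\ind_2(D_U)\equiv 0$; the same cancellations then yield (\ref{spinc-B-2}). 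The only genuine obstacle I foresee is the virtual real-bundle interpretation of the complex expression $\xi_\CC\otimes\xi_\CC$, needed in order to apply (\ref{Z1}), together with verifying $N\cong i^*\xi_\RR$ for the characteristic submanifold; once these are in hand the argument is purely formal bookkeeping.
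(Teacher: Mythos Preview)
Your proposal is correct and follows essentially the same route as the paper: multiply (\ref{spinc-mainformula}) by $2$, apply the Rokhlin congruence (\ref{Z1}) termwise using $i^*TZ\cong TU\oplus N$ and $i^*\xi_\RR\cong N$, and simplify mod $2$; likewise for (\ref{spinc-newformula}). Your explicit check that $\xi_\CC\otimes_\CC\xi_\CC\cong(\xi_\RR\otimes_\RR\xi_\RR)_\CC$ is a point the paper leaves implicit, but otherwise the arguments coincide.
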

\begin{proof} Clearly $i^*TZ\cong TU\oplus N$ and $i^*\xi_\RR \cong N$. 

Combining Theorem \ref{spinc-main} with (\ref{Z1}), we have, when $\!\!\!\!\mod\, 2$,
\be 
\begin{split}
&\left\langle  \frac{C_c(x)[p_c-C_c(x)^2]}{12} , [Z]  \right\rangle\\
=&\left\langle   2\widehat A(TZ)e^{c/2}\ch (V_\CC(x))+\widehat A(TZ)e^{c/2}\ch (T_\CC Z)-\widehat A(TZ)e^{c/2}[\xi_\CC\otimes \xi_\CC-\xi_\CC+2], [Z] \right\rangle\\
=& 2\ind_2(D_U^{i^*V(x)})+\ind_2 (D_U^{TU})+\ind_2 (D_U^{N})+\ind_2(D_U^{N\otimes N})+\ind_2 (D_U^{N})+2\ind_2 (D_U)\\
=&\ind_2 (D_U^{TU})+\ind_2 (D_U^{N\otimes N}).
\end{split}
\ee
Combining Theorem \ref{spinc-new} with (\ref{Z1}), we have, when $\!\!\!\!\mod\, 2$,
\be 
\begin{split}
&\left\langle  \frac{\widetilde C_c(x)[\widetilde p_c+6\lambda_c\widetilde C_c(x)-4\widetilde C_c(x)^2]}{12} , [Z] \right\rangle\\
=&\left\langle   \widehat A(TZ)e^{c/2}\ch (V_\CC(x))+\widehat A(TZ)e^{c/2}\ch (T_\CC Z)+\widehat A(TZ)e^{c/2}[-\xi_\CC\otimes \xi_\CC+\xi_\CC+246], [Z] \right\rangle\\
=&\ind_2(D_U^{i^*V(x)})+\ind_2 (D_U^{TU})+\ind_2 (D_U^{N})+\ind_2 (D_U^{N\otimes N})+\ind_2 (D_U^{N})+246\ind_2 (D_U)\\
=&\ind_2 (D_U^{TU})+\ind_2 (D_U^{N\otimes N})+\ind_2(D_U^{i^*V(x)}).
\end{split}
\ee
The desired formulas follow. \end{proof}

By subtracting the corresponding sides of (\ref{spinc-mod2-1}) from (\ref{spinc-B-1}), and (\ref{spinc-mod2-2}) from (\ref{spinc-B-2}) respectively, and then applying the Poincar\'e duality, we can show 
the following through direct computations.
\begin{corollary} \label{differ}
Let $e$ be the Euler class of $N$ over $U$.  
The following identities hold, 
\be \label{differ1}
\begin{split}
&\frac{1}{64} \left\langle e\cdot \left\{24i^*C(x)^2-(4p_1(TU)+10e^2)i^*C(x)+p_1(TU)^2-4p_2(TU)+6p_1(TU)e^2-21e^4 \right\}, [U]\right\rangle\\
\equiv& \ind_2 (D_U^{N})+\ind_2 (D_U^{N\otimes N})+\frac{1}{2}\left\langle \widehat A(TU)\ch\left(2i^*V_\CC(x)+T_\CC U+N_\CC-4\right)\tanh\left(\frac{e}{4}\right), [U]\right\rangle \ \ \ \!\!\!\!\!\!\mod2;
\end{split}
\ee
and 
\be \label{differ2}
\begin{split}
&\frac{1}{64} \langle e\cdot \{48i^*\widetilde C(x)^2-(28p_1(TU)+10e^2)i^*\widetilde C(x)+7p_1(TU)^2-4p_2(TU)+6p_1(TU)e^2-21e^4 \}, [U]\rangle\\
\equiv& \ind_2 (D_U^{N})+\ind_2 (D_U^{N\otimes N})+\!\frac{1}{2}\!\left\langle \widehat A(TU)\ch(i^*V_\CC(x)+T_\CC U+N_\CC+244)\tanh\left(\frac{e}{4}\right), [U]\right\rangle \ \ \ \!\!\!\!\!\!\mod2.
\end{split}
\ee
\end{corollary}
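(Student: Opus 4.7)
The plan is to prove the two identities by directly subtracting the corresponding Rokhlin-type mod $2$ congruences and then translating the resulting algebraic difference on $Z$ into a characteristic-class expression on $U$ via Poincaré duality. Subtracting (\ref{spinc-mod2-1}) from (\ref{spinc-B-1}) and, in parallel, (\ref{spinc-mod2-2}) from (\ref{spinc-B-2}), the right-hand sides simplify immediately: working mod $2$, every minus sign becomes a plus, so the $\ind_2(D_U^{TU})$ contributions (and, for the second identity, also $\ind_2(D_U^{i^\ast V(x)})$) cancel and the $\tanh(e/4)$ correction terms return with a positive coefficient, producing exactly the right-hand sides of (\ref{differ1}) and (\ref{differ2}).

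Next I would expand the left-hand side. Writing $C_c = C(x) + \delta$ and $p_c = p + \epsilon$, where
\[
\delta = -\tfrac{3c^2}{2}, \qquad \epsilon = -\tfrac{3 p_1 c^2}{4} + \tfrac{39 c^4}{8},
\]
(the first coming from $\lambda_c = \tfrac{1}{2}(p_1 - 3c^2)$ and the second from (\ref{spinc-class})), one finds
\[
C_c(p_c - C_c^2) - C(p - C^2) = C\epsilon + \delta p + \delta\epsilon - 3 C^2 \delta - 3 C \delta^2 - \delta^3,
\]
and every monomial in this expression carries at least one factor of $c^2$. The analogous expansion for the second identity uses $\widetilde p_c - \widetilde p = \tfrac{15 p_1 c^2}{4} - \tfrac{15 c^4}{8}$, obtained from $\lambda_c^2 - \lambda^2 = -\tfrac{3 p_1 c^2}{2} + \tfrac{9 c^4}{4}$.

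Now I would push the computation down to $U$. Poincaré duality gives $\langle c^k \gamma, [Z]\rangle = \langle e^{k-1}\, i^\ast \gamma, [U]\rangle$ for every $k \geq 1$ and $\gamma \in H^\ast(Z; \QQ)$, where $e = i^\ast c = c_1(N)$ is the Euler class of the normal bundle $N = i^\ast \xi_\RR$. Since $N$ has rank $2$, we have $p_1(N) = e^2$ and $p_2(N) = 0$, so the splitting $i^\ast TZ = TU \oplus N$ yields
\[
i^\ast p_1 = p_1(TU) + e^2, \qquad i^\ast p_2 = p_2(TU) + p_1(TU) e^2.
\]
Substituting these pullbacks into the expanded cubic differences from the previous step and collecting monomials in $e$, $p_1(TU)$, $p_2(TU)$, and $i^\ast C(x)$ (respectively $i^\ast \widetilde C(x)$) yields the bracketed polynomials multiplied by $e/64$ appearing in (\ref{differ1}) and (\ref{differ2}).

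The main obstacle is bookkeeping rather than conceptual: after the substitutions many contributions must be gathered into each monomial on $U$ (for instance $e^3\, i^\ast C(x)$ receives contributions from both $p_1 c^2 C$ and $c^4 C$, while $e^5$ collects contributions from $p_1^2 c^2$, $p_1 c^4$, and $c^6$), and one must verify that all coefficients agree, possibly up to a manifestly even correction vanishing mod $2$, with the compact forms printed in (\ref{differ1}) and (\ref{differ2}). No new geometric input beyond Poincaré duality and the splitting $i^\ast TZ = TU \oplus N$ is required.
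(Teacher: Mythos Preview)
Your proposal is correct and follows exactly the route indicated by the paper: subtract (\ref{spinc-mod2-1}) from (\ref{spinc-B-1}) (and (\ref{spinc-mod2-2}) from (\ref{spinc-B-2})), then use Poincar\'e duality together with the splitting $i^\ast TZ\cong TU\oplus N$ to rewrite the resulting expression on $U$. The paper records this only as ``direct computations''; your write-up simply makes those computations explicit, and the intermediate formulas for $\delta$, $\epsilon$, $\widetilde p_c-\widetilde p$, and the pulled-back Pontrjagin classes are all correct.
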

\begin{remark} \label{remark-differ} Note that on the spin manifold $U$, $$i^*C(x)=\frac{1}{2}p_1(TU)+2i^*x, \ \ \ i^*\widetilde C(x)=\frac{1}{2}p_1(TU)+i^*x.$$ 
The left hand sides of (\ref{differ1}) and (\ref{differ2}) give interesting integral quadratic forms on $H^4(U; \ZZ)$ after being multiplied by 64.
\end{remark}
 
Motivated by Corollary \ref{differ} and Remark \ref{remark-differ}, for any $10$-dimensional closed spin manifold $B$ with an auxiliary complex line bundle $\xi$, we consider the following pairing on the cohomology classes of degree $4$: 
\[
\mathcal{L}^c: H^{4}(B;\mathbb{Z})\otimes H^{4}(B;\mathbb{Z})\rightarrow \mathbb{Z}
\]
defined by 
\[
\mathcal{L}^c(x, y)= \langle x \cup y \cup c, [B] \rangle,
\]
where $c:=c_1(\xi)\in H^2(B;\mathbb{Z})$ is the first Chern class of $\xi$ and $[B]$ is the fundamental class of $B$. This naturally defines a symmetric bilinear form on the free part of $H^{4}(B;\mathbb{Z})$, which we denote by $\mathcal{L}^{c}$.

Furthermore, it was shown by Stong \cite{Stong86} that the $11$-th spin bordism group over the Eilenberg-MacLane space $K(\mathbb{Z},4)$ is trivial, i.e., $\Omega_{11}^{spin}(K(\mathbb{Z}, 4))=0$. Since by Bott-Samelson \cite{BS58}, $K(\mathbb{Z},4)$ is homotopy equivalent to $BE_8$ up to $15$ skeleton, it is also true that
\[
\Omega_{11}^{spin}(BE_8)=0.
\]
This implies that the circle bundle $\mathbb{S}(\xi)$ of $\xi$, which is spin, bounds a $12$-dimensional spin manifold $W$ such that any $E_8$-principal bundle over $\mathbb{S}(\xi)$ can be extended to $W$. Again by Bott-Samelson \cite{BS58} that $K(\mathbb{Z},4)$ is homotopy equivalent to $BE_8$ up to $15$ skeleton, the isomorphism classes of $E_8$-principal bundles over any manifold $M$ of dimension less than $15$ are in one-one correspondence with the $4$-classes in $H^4(M;\mathbb{Z})$. Let $x\in H^4(B;\mathbb{Z})$ correspond to a given $E_8$-principal bundle over $B$. Since the disk bundle $\mathbb{D}(\xi)$ of $\xi$ is homotopy equivalent to $B$, this $E_8$-principal bundle is extended to $\mathbb{D}(\xi)$, and then restricted to $\mathbb{S}(\xi)$ with a further extension to $W$. Now gluing $\mathbb{D}(\xi)$ with $W$ along $\mathbb{S}(\xi)$, we get a $12$-dimensional closed spin$^c$ manifold 
\[
Z= \mathbb{D}(\xi)\bigcup_{\mathbb{S}(\xi)} W,
\]
such that there exists a principal $E_8$-bundles restricted to the given one over $B$. It is clear that $B$ is a characteristic submanifold of $Z$. Let $V(x)$ denote the real adjoint bundle over $B$ associated to the principal $E_8$-bundle determined by the class $x$.
By Corollary \ref{differ} and Remark \ref{differ}, we have
\begin{theorem}\label{spinbcthm}
\be \label{spinbc1}
\begin{split}
&\frac{1}{64} \left\langle c\cdot \left\{24C(x)^2-(4p_1(TB)+10c^2)C(x)+p_1(TB)^2-4p_2(TB)+6p_1(TB)c^2-21c^4 \right\}, [B]\right\rangle\\
\equiv& \ind_2 (D_B^{\xi_{\mathbb{R}}})+\ind_2 (D_B^{\xi_{\mathbb{R}}\otimes \xi_{\mathbb{R}}})+\frac{1}{2}\left\langle \widehat A(TB)\ch\left(2 V_\CC(x)+T_\CC B+\xi_\CC-4\right)\tanh\left(\frac{c}{4}\right), [B]\right\rangle \ \ \ \!\!\!\!\!\!\mod2;
\end{split}
\ee
and 
\be \label{spinbc2}
\begin{split}
&\frac{1}{64} \langle c\cdot \{48\widetilde C(x)^2-(28p_1(TB)+10c^2)\widetilde C(x)+7p_1(TB)^2-4p_2(TB)+6p_1(TB)c^2-21c^4 \}, [B]\rangle\\
\equiv& \ind_2 (D_B^{\xi_{\mathbb{R}}})+\ind_2 (D_B^{\xi_{\mathbb{R}}\otimes \xi_{\mathbb{R}}})+\!\frac{1}{2}\!\left\langle \widehat A(TB)\ch(V_\CC(x)+T_\CC B+\xi_\CC+244)\tanh\left(\frac{c}{4}\right), [B]\right\rangle \ \ \ \!\!\!\!\!\!\mod2.
\end{split}
\ee
where $C(x)=\frac{1}{2}p_1(TB)+2x$, and $\widetilde C(x)=\frac{1}{2}p_1(TB)+x$.
\end{theorem}

\begin{remark}\label{remark-spinbcthm} It is not hard to check that 
$$\frac{1}{64} \left\langle c\cdot \left\{24C(x)^2-(4p_1(TB)+10c^2)C(x)+p_1(TB)^2-4p_2(TB)+6p_1(TB)c^2-21c^4 \right\}, [B]\right\rangle,$$ 
and
$$
\frac{1}{64} \langle c\cdot \{48\widetilde C(x)^2-(28p_1(TB)+10c^2)\widetilde C(x)+7p_1(TB)^2-4p_2(TB)+6p_1(TB)c^2-21c^4 \}, [B]\rangle$$
are quadratic refinements of $3\LL^c$ and $6\LL^c$ respectively. 
\end{remark}

$\, $

Now suppose $Z$ has boundary and let $Y$ be the boundary of $Z$ with the induced spin$^c$ structure. Assume all the involved metrics and connections are of product structures near $\partial Z=Y$. Let $D^c_Y$ be the Atiyah-Singer spin$^c$ Dirac operator on $Y$.

Let $$C_c(\widecheck x)=\lambda_c(\nabla^{TZ})+2\widecheck x$$ and 
$$\widetilde C_c(\widecheck x)=\lambda_c(\nabla^{TZ})+\widecheck x.$$

As (\ref{spinc-form1}) and (\ref{spinc-form2}) hold on the level of forms, by the Atiyah-Patodi-Singer index theorem \cite{APS}, we have the following formulas,
\begin{theorem} The following identities hold, 
\be \frac{1}{12}\int_Z C_c(\widecheck x)[p_c(\nabla^{TZ})-C_c(\widecheck x)^2]\equiv 2\reta(D_Y^{c, V_\CC(x)})+\reta(D_Y^{c, T_\CC Z})-\reta(D_Y^{c, \xi_\CC\otimes \xi_\CC-\xi_\CC+2}) \ \ \ \!\!\!\!\!\!\mod\ZZ;
\ee
and
\be 
\begin{split}
&\frac{1}{12}\int_Z \widetilde C_c(\widecheck x)[\widetilde p_c(\nabla^{TZ})+6\lambda_c(\nabla^{TZ})\widetilde C_c(\widecheck x)-4\widetilde C_c(\widecheck x)^2]\\
\equiv&\reta(D_Y^{c, V_\CC(x)})+\reta(D_Y^{c, T_\CC Z})+\reta(D_Y^{c, -\xi_\CC\otimes \xi_\CC+\xi_\CC+246})\ \ \ \!\!\!\!\!\!\mod\ZZ.
\end{split}
\ee
\end{theorem}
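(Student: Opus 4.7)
The plan is to transcribe the closed-manifold argument that produced Theorems \ref{spinc-main} and \ref{spinc-new} into the boundary setting, exactly as Theorem \ref{spin-boundary} was derived from Theorems \ref{WFH-main} and \ref{spin-new}. The decisive observation is that the factorization identities (\ref{spinc-form1}) and (\ref{spinc-form2}) underlying those closed-case theorems are pointwise equalities of differential forms in degree $12$, hence integrate over any compact $Z$ regardless of whether $\partial Z=\emptyset$; only the analytic interpretation of the right-hand sides changes when the boundary is nonempty.

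First I would invoke the Atiyah--Patodi--Singer index theorem for the twisted spin$^c$ Dirac operator $D_Z^{c,E}$. With all Riemannian metrics and Hermitian connections of product structure near $Y=\partial Z$, and for any complex bundle $(E,\nabla^E)$ on $Z$,
\begin{equation*}
\ind(D_Z^{c,E}) \;=\; \int_Z \widehat A(\nabla^{TZ})\, e^{c(\nabla^\xi)/2}\,\ch(\nabla^E) \;-\; \reta(D_Y^{c,\,i^*E}).
\end{equation*}
I apply this with $E$ ranging over each virtual bundle appearing on the right of (\ref{spinc-mainformula}), namely $V_\CC(x)$, $T_\CC Z$ and $\xi_\CC\otimes\xi_\CC-\xi_\CC+2$, and over the analogous bundles $V_\CC(x)$, $T_\CC Z$, $-\xi_\CC\otimes\xi_\CC+\xi_\CC+246$ appearing in (\ref{spinc-newformula}). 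Each instance expresses the corresponding Chern--Weil integral as an integer (the index) plus the negative of a reduced $\eta$-invariant.

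Next, integrating the form-level factorizations (\ref{spinc-form1}) and (\ref{spinc-form2}) in degree $12$ over $Z$ equates the two left-hand sides in the theorem with linear combinations of these Chern--Weil integrals, with coefficients $(2,1,-1)$ in the first case and $(1,1,1)$ in the second --- precisely the coefficients of the $\reta$'s in the two target congruences. (The first doubling arises because the theorem normalizes by $\tfrac{1}{12}$ whereas (\ref{spinc-mainformula}) normalizes by $\tfrac{1}{24}$; the second is a direct match.) Substituting the APS expression for each summand and discarding the integer indices thus yields the desired congruences modulo $\ZZ$.

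The only step requiring care is bookkeeping: one must verify that $\widecheck x=\frac{1}{60}c_2(V_\CC(x),\nabla^{V_\CC(x)})$, substituted into the polynomial identity $\ch(V_\CC(x))=248-60\widecheck x+6\widecheck x^2-\tfrac{1}{3}\widecheck x^3$, produces the Chern--Weil representative of $\ch(V_\CC(x))$, so that the identical form-level factorization is genuinely being applied in the bounded case; and that the product-structure hypothesis near $Y$ lets APS be used summand-by-summand with no residual boundary transgression in the $\widehat A\,e^{c/2}$ integrands. Once these routine checks are in place, the desired congruences follow immediately from the linear-combination argument sketched above.
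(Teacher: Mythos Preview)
Your proposal is correct and follows exactly the paper's approach: the authors simply remark that since the factorizations (\ref{spinc-form1}) and (\ref{spinc-form2}) hold at the level of differential forms, the Atiyah--Patodi--Singer index theorem yields the stated congruences. Your write-up fills in the routine bookkeeping (coefficients $(2,1,-1)$ and $(1,1,1)$, the $\tfrac1{24}$ vs.\ $\tfrac1{12}$ normalization) that the paper leaves implicit; note only the minor slip in prose where you say the integral equals an index ``plus the negative'' of $\reta$ --- from your own displayed APS formula it is $\int = \ind + \reta$, which is what gives the signs matching the theorem.
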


$\, $

\section{Cubic forms on orientable 12-manifolds} \label{sectionorien}

In this section, we give the Witten-Freed-Hopkins type anomaly cancellation formulas on 12 dimensional orientable manifolds, without assuming that the manifold is spin, spin$^c$ or spin$^{\omega_2}$. 

Let $Z$ be a 12 dimensional oriented smooth closed manifold. Let \be \widehat L(x)=\frac{x}{\tanh \frac{x}{2}} \ee
be the $\widehat L$-polynomial and $\widehat L(TZ)$ be the $\widehat L$-class of $TZ$ (c.f. \cite{Liu1}). Let $d_s$ be the signature operator on $Z$ and $W$ be a complex vector bundle over $Z$. Then by the Atiyah-Singer index theorem 
(\cite{AS})
$$ \ind(d_s\otimes W)=\langle \widehat L(TZ)\ch(W), [Z]\rangle.$$

For $x\in H^4(Z; \ZZ)$, let $$D(x)=-p_1+2x.$$
\begin{theorem}\label{o1} 
The following identity holds,
\be \label{o1-formula}
\begin{split}
&\left \langle  \frac{D(x)[4p_1^2-7p_2-D(x)^2]}{6} , [Z]       \right\rangle\\
=&\frac{1}{32}\left\langle 2\widehat L(TZ)\ch (V_\CC(x))+2\widehat L(TZ)\ch (T_\CC Z)+\widehat L(TZ)\ch(\wedge^2(T_\CC Z)-S^2(T_\CC Z))-4\widehat L(TZ), [Z] \right\rangle.
\end{split}
\ee

\end{theorem}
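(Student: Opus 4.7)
\textbf{Plan for the proof of Theorem \ref{o1}.} The strategy is to adapt the modularity approach used for the spin and $\mathrm{spin}^c$ cases, replacing the $\widehat{A}$-genus by the $\widehat{L}$-genus $\widehat{L}(x)=x/\tanh(x/2)$, which is defined on any oriented manifold without further topological assumption. First, I would introduce an $\widehat{L}$-Witten class $\mathcal{L}(TZ)\in H^{4*}(Z;\QQ)[[q]]$ by multiplying $\widehat{L}(TZ)$ by the Chern character of a tensor product $\Theta_1(T_\CC Z)\otimes\Theta_2(T_\CC Z)\otimes\Theta_3(T_\CC Z)$ of Witten-type bundles built from the Jacobi theta functions $\theta_1,\theta_2,\theta_3$, together with an $E_2$-exponential correction $e^{\frac{1}{24}E_2(\tau)\cdot p_1(TZ)}$ and an appropriate power of $\varphi(\tau)$, in parallel with the construction of $\mathcal{W}(TM)$. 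Next, for the $E_8$-bundle associated to $x\in H^4(Z;\ZZ)$ I would twist $\mathcal{L}(TZ)$ by a factor $\varphi(\tau)^{16}\,\ch(\mathcal{V}_1)\ch(\mathcal{V}_2)\,e^{\frac{1}{24}E_2(\tau)\cdot\frac{1}{30}(c_2(W_1)+c_2(W_2))}$ to obtain the two-fold twisted $\widehat{L}$-Witten class analogous to $\mathcal{Q}(\mathcal{V}_1,\mathcal{V}_2)$, the class referred to as (\ref{L-Witten-twofold}) in Section~\ref{proof-o1o2}.

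The second step is to show that, evaluated on $[Z]$, the degree-$12$ component of this twisted class is a modular form of weight $14$ over the full group $SL(2,\ZZ)$ with $q$-expansion in $\QQ$. This is where the specific combination of $\Theta_1,\Theta_2,\Theta_3$, the $\varphi(\tau)$-powers, and the $E_2$-correction are crucial: the three theta factors permute under $S$ and $T$ so that, with the correct normalisation, their combined transformation is genuinely $SL(2,\ZZ)$-equivariant, while $e^{\frac{1}{24}E_2\cdot p_1(TZ)}$ cancels the quasi-modular $\varphi'/\varphi$ anomaly in exactly the same way as in Section~\ref{intro-spin}. Once modularity of weight $14$ is established, the $1$-dimensionality of $M_{14}(SL(2,\ZZ))=\CC\cdot E_4^2 E_6$ forces a factorization
\[
\bigl(\text{twisted class}\bigr)_{(12)}[Z] \;=\; P(x,p_1,p_2)\cdot E_4^2(\tau)E_6(\tau),
\]
where $P$ is a rational linear combination of characteristic numbers of $Z$. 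I call this identity (\ref{orient-form1}).

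The third step extracts Theorem \ref{o1} from this factorization. Computing the constant term ($q\to 0$) on the left produces the $\widehat{L}$-genus evaluations on the right-hand side of (\ref{o1-formula}): indeed the $q^0$ contribution of $\Theta_i$ is $1$ and $\varphi(0)=1$, while the $q^0$ piece of $\ch(\mathcal{V})$ is built out of $\ch(V_\CC(x))$, $\ch(T_\CC Z)$, $\ch(\wedge^2 T_\CC Z-S^2 T_\CC Z)$, and constants in the standard way from the affine $E_8$ basic representation. On the other hand, reading $P(x,p_1,p_2)$ off the cohomological side and using $D(x)=-p_1+2x$ together with the known $q$-expansion of $E_4^2 E_6$ produces the rational cubic polynomial $D(x)[4p_1^2-7p_2-D(x)^2]/6$ and the overall factor $1/32$. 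Matching these two expansions gives (\ref{o1-formula}).

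The main obstacle is the modularity on the full group $SL(2,\ZZ)$. The $\widehat{L}$-class is naturally tied to the signature operator, whose associated modular forms typically live only on the congruence subgroup $\Gamma_0(2)$; the whole point of the tri-product $\Theta_1\otimes\Theta_2\otimes\Theta_3$ together with the specific $E_2$-correction and $\varphi$-powers is to upgrade the transformation law to the full $SL(2,\ZZ)$. A subsidiary bookkeeping task is tracking the numerical coefficients $1/32$, $1/6$, $-7$ and $4$ in the statement, which emerge from the explicit $q$-series of $E_4^2 E_6$ combined with the normalisation $\widehat{L}(x)=x/\tanh(x/2)$, and I expect no conceptual difficulty here once the modular weight and the leading behaviour at the cusp are computed.
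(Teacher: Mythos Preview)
Your overall strategy---build a twisted $\widehat{L}$-Witten class, prove it is modular of weight $14$ over $SL(2,\ZZ)$, and exploit $\dim M_{14}=1$ to force a factorization---is exactly the paper's approach. A few details in your outline need adjustment, though. First, the bundle you twist by is not just $\Theta_1\otimes\Theta_2\otimes\Theta_3$ but $\Phi(T_\CC Z)=\Theta(T_\CC Z)\otimes\Theta_1\otimes\Theta_2\otimes\Theta_3$, i.e.\ the ordinary Witten bundle $\Theta=\bigotimes_n S_{q^n}(\widetilde{T_\CC Z})$ must also be present; without it the theta-function product $\prod x_k\frac{\theta'(0)}{\theta(x_k)}\frac{\theta_1(x_k)}{\theta_1(0)}\frac{\theta_2(x_k)}{\theta_2(0)}\frac{\theta_3(x_k)}{\theta_3(0)}$ does not assemble correctly. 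Second, the $E_2$-correction must be $e^{\frac{1}{24}E_2(\tau)\cdot(-2p_1(TZ))}=e^{-\frac{1}{12}E_2(\tau)p_1(TZ)}$, not $e^{\frac{1}{24}E_2(\tau)\cdot p_1(TZ)}$; the coefficient $-2$ (rather than $+1$) is forced by the quasi-modular anomaly of the four-fold theta product and is what makes $D(x)=-p_1+2x$ appear instead of $\lambda+2x$.

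Most importantly, your extraction step is mis-described. The terms $\ch(V_\CC(x))$, $\ch(T_\CC Z)$ and $\ch(\wedge^2 T_\CC Z-S^2 T_\CC Z)$ on the right-hand side of the theorem do \emph{not} come from the $q^0$ piece of $\ch(\mathcal{V})$ or from the $E_8$ character; the $q^0$ term of the whole twisted class is essentially $e^{\frac{1}{24}(-2p_1+\frac{1}{15}c_2(W))}\widehat{L}(TZ)$. Rather, one compares the $q^0$ and $q^1$ coefficients against $E_4^2E_6=1-24q+\cdots$: the $q^1$ coefficient brings in $W_i+W_j+D_1-16$, where $D_1=2T_\CC Z+\wedge^2(T_\CC Z)-S^2(T_\CC Z)-12$ is the $q$-coefficient of $\Phi(T_\CC Z)$. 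So $\wedge^2-S^2$ arises from the tangent-bundle Witten class, not from the affine $E_8$ side. With these three corrections in place your plan matches the paper's proof line for line.
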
 
$\, $

Let $$\widetilde D(x)=-p_1+x.$$ 

\begin{theorem}\label{o2} The following identity holds,
\be \label{o2-formula}
\begin{split}
&\left \langle  \frac{\widetilde D(x)[p_1^2-7p_2-6p_1\widetilde D(x)-4\widetilde D(x)^2]}{3} , [Z]       \right\rangle\\
=&\frac{1}{16}\left\langle \widehat L(TZ)\ch (V_\CC(x))+2\widehat L(TZ)\ch (T_\CC Z)+\widehat L(TZ)\ch(\wedge^2(T_\CC Z)-S^2 (T_\CC Z))+244\widehat L(TZ), [Z] \right\rangle.
\end{split}
\ee

\end{theorem}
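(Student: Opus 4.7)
The plan is to adapt the modularity strategy used to prove Theorems \ref{spinc-main} and \ref{spinc-new}, replacing the $\widehat{A}$-class by the Hirzebruch $\widehat{L}$-class so that no spin or spin$^c$ hypothesis on $Z$ is needed; the twisted Dirac operator is then replaced by the signature operator $d_s$, whose index theorem packages the right-hand side of (\ref{o2-formula}) as a sum of signature-twisted indices.

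First I would construct an $\widehat{L}$-Witten class, as referenced in (\ref{L-Witten}), of the form
\begin{equation*}
\mathcal{W}_L(TZ)=e^{\kappa\,E_2(\tau)\,p_1(TZ)}\,\widehat{L}(TZ)\,\ch(\Theta_1(T_\CC Z)\otimes\Theta_2(T_\CC Z)\otimes\Theta_3(T_\CC Z)),
\end{equation*}
where $\Theta_1,\Theta_2,\Theta_3$ are the three bundles built from the level-$2$ Jacobi theta functions as in \cite{Liu1}, and the rational constant $\kappa$ in the Eisenstein prefactor is chosen to cancel the quasi-modular anomaly of $E_2(\tau)$. This construction needs only orientability of $Z$, since $\widehat{L}(TZ)$ and the theta bundles of $T_\CC Z$ are defined for any oriented manifold.

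Next I would introduce the single-bundle twisted $\widehat{L}$-Witten class
\begin{equation*}
\mathcal{R}_L(\mathcal{V}):=e^{\frac{1}{24}E_2(\tau)\cdot\frac{1}{30}c_2(W)}\,\mathcal{W}_L(TZ)\,\varphi(\tau)^{8}\,\ch(\mathcal{V})\in H^{4*}(Z;\QQ)[[q]],
\end{equation*}
with $\mathcal{V}$ the basic-representation bundle of the $E_8$-bundle determined by $x$, and verify that its degree-$12$ component, paired with $[Z]$, is a modular form of weight $14$ over $SL(2,\ZZ)$. Since $M_{14}(SL(2,\ZZ))$ is one-dimensional and spanned by $E_4(\tau)^2E_6(\tau)$, this pairing is forced to equal a rational multiple of $E_4^2E_6$, and the multiplier is read off from the leading $q$-coefficients of the left-hand side. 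Comparing the $q\to 0$ constant term and the $\tau\mapsto -1/\tau$-transform of both sides produces the factorization formula (\ref{orient-form2}); using $\ch(V_\CC(x))=248-60x+6x^2-\frac13 x^3$, $\widetilde D(x)=-p_1+x$, and expanding up to cubic order in $x$, the factorization collapses to (\ref{o2-formula}) once the Atiyah-Singer index theorem identifies the characteristic-form side with the indices of signature operators twisted by $V_\CC(x)$, $T_\CC Z$, and $\wedge^2(T_\CC Z)-S^2(T_\CC Z)$.

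The main obstacle lies in the first step: pinning down the correct combination of the three level-$2$ theta bundles $\Theta_i$ and the constant $\kappa$ so that $\mathcal{W}_L(TZ)$ simultaneously (a) $q$-deforms $\widehat{L}(TZ)$ at $q=0$, (b) satisfies the weight-$14$ transformation law for the full $SL(2,\ZZ)$ in its top-degree component rather than only a congruence subgroup, and (c) produces, after modular inversion, exactly the explicit numerical coefficients $\tfrac{1}{16}$, $2$, and $244$ appearing on the right-hand side of (\ref{o2-formula}). Once this modularity input is correctly set up, the rest of the argument parallels the spin$^c$ proofs of Section \ref{proof-spinc} and reduces to a mechanical coefficient extraction from the $q$-expansions of $E_2$, $E_4$, $E_6$, and $\varphi(\tau)$.
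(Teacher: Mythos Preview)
Your overall strategy matches the paper's: build a twisted $\widehat{L}$-Witten class $\mathcal{R}_L$ from a single $E_8$ bundle, prove its degree-$12$ component is modular over $SL(2,\ZZ)$, and extract the identity from the one-dimensionality of the relevant space of modular forms. However, two concrete errors would prevent the argument from closing.

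First, the weight is wrong. With a single factor $\varphi(\tau)^{8}\ch(\mathcal{V})$ the $E_8$ theta block contributes weight $4$ (eight factors of weight $\tfrac12$), while the $\widehat{L}$-Witten part on a $12$-manifold contributes weight $6$; so $\mathcal{R}_L$ has weight $10$, not $14$. The relevant one-dimensional space is therefore spanned by $E_4(\tau)E_6(\tau)=1-264q+\cdots$, not by $E_4^2E_6=1-24q+\cdots$. It is precisely the coefficient $-264$ (combined with the $-8$ from $\varphi^{8}$ and the constant in $D_1$) that produces the $244$ on the right of (\ref{o2-formula}); using $-24$ you would instead reproduce the constants of Theorem~\ref{o1}. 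The weight-$14$ case is the \emph{two}-bundle construction $\mathcal{Q}_L$ with $\varphi^{16}\ch(\mathcal{V}_i)\ch(\mathcal{V}_j)$.

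Second, your $\mathcal{W}_L$ omits the Witten bundle $\Theta(T_\CC Z)=\bigotimes_{n\geq 1}S_{q^n}(\widetilde{T_\CC Z})$. The paper uses
\[
\Phi(T_\CC Z)=\Theta(T_\CC Z)\otimes\Theta_1(T_\CC Z)\otimes\Theta_2(T_\CC Z)\otimes\Theta_3(T_\CC Z),
\]
so that $\widehat{L}(TZ)\ch(\Phi)$ admits the theta-function expression
\[
2^{6}\prod_{k=1}^{6} x_k\,\frac{\theta'(0,\tau)}{\theta(x_k,\tau)}\,\frac{\theta_1(x_k,\tau)}{\theta_1(0,\tau)}\,\frac{\theta_2(x_k,\tau)}{\theta_2(0,\tau)}\,\frac{\theta_3(x_k,\tau)}{\theta_3(0,\tau)}
\]
needed for $SL(2,\ZZ)$-modularity and the weight count above; without the symmetric-power factor this identification, and hence full modular invariance, fails. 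A minor point: the coefficient matching in the paper is done by comparing the $q^0$ and $q^1$ terms of the $q$-expansion directly (using $E_4E_6=1-264q+\cdots$), not via the $\tau\mapsto -1/\tau$ transform. Once these three items are corrected, your outline coincides with the paper's proof in Section~\ref{proof-o1o2}.
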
 

These two theorems are consequences of the factorization formulas for degree 12 characteristic forms: (\ref{orient-form1}) and (\ref{orient-form2}) (proved in Subsection \ref{proof-o1o2}) and direct computations of the degree 8 components in (\ref{orient-form1}) and (\ref{orient-form2}). 

$\, $

Now suppose $Z$ has boundary and let $Y$ be the boundary of $Z$ with the induced orientation. Assume all the involved metrics and connections are of product structures near $\partial Z=Y$. Let $B_Y$ be the signature operator on $Y$. 

Denote $$D(\widecheck x)=-p_1(\nabla^{TZ})+2\widecheck x$$ and $$\widetilde D(\widecheck x)=-p_1(\nabla^{TZ})+\widecheck x.$$

As (\ref{orient-form1}) and (\ref{orient-form2}) hold on the level of forms, by the Atiyah-Patodi-Singer index theorem \cite{APS}, we have

\begin{theorem} \label{o-boundary} The following identities hold,
\be \begin{split}
&\frac{16}{3}\int_Z D(\widecheck x)[4p_1(\nabla^{TZ})^2-7p_2(\nabla^{TZ})-D(\widecheck x)^2]\\
\equiv&2\reta(B_Y^{V_\CC(x)})+2\reta(B_Y^{T_\CC Z})+ \reta(B_Y^{\wedge^2(T_\CC Z)-S^2 (T_\CC Z)})-4\reta(B_Y) \ \ \ \!\!\!\!\!\!\mod\ZZ;
\end{split}
\ee
and 
\be \begin{split}
&\frac{16}{3}\int_Z \widetilde D(\widecheck x)[p_1(\nabla^{TZ})^2-7p_2(\nabla^{TZ})-6p_1(\nabla^{TZ}) \widetilde D(\widecheck x)-4 \widetilde D(\widecheck x)^2]\\
\equiv&\reta(B_Y^{V_\CC(x)})+2\reta(B_Y^{T_\CC Z})+ \reta(B_Y^{\wedge^2(T_\CC Z)-S^2 (T_\CC Z)})+244\reta(B_Y) \ \ \ \!\!\!\!\!\!\mod\ZZ.
\end{split}
\ee
\end{theorem}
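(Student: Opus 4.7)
\textbf{Proof plan for Theorem \ref{o-boundary}.} The argument runs exactly parallel to that for Theorem \ref{spin-boundary}: the factorization formulas (\ref{orient-form1}) and (\ref{orient-form2}), to be established in Section \ref{proof-o1o2}, are equalities of \emph{differential forms} on $Z$ rather than merely of cohomology classes, since they arise from a Chern--Weil representative of the $\widehat L$-Witten class and of its twisted variants, built from the Levi--Civita connection $\nabla^{TZ}$ and Hermitian connections on $V_\CC(x)$, $T_\CC Z$ and $\wedge^2(T_\CC Z)-S^2(T_\CC Z)$. Thus the cohomological identities (\ref{o1-formula}) and (\ref{o2-formula}) lift to pointwise identities of degree $12$ forms on $Z$.

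The plan is to integrate these form-level identities over $Z$ and then apply the Atiyah--Patodi--Singer theorem \cite{APS} for the signature operator to the right-hand side term by term. The product structure assumption near $\partial Z=Y$ ensures that the boundary conditions are well-posed and that the boundary operator $B_Y^{E}$ is well-defined for each auxiliary complex bundle $E$ appearing on the right. Recall that the APS theorem gives, for any Hermitian bundle $(E,\nabla^{E})$ with product structure near $Y$,
\begin{equation*}
\ind\!\left(B^{E}, P\right) \;=\; \int_Z \widehat L\!\left(TZ,\nabla^{TZ}\right)\ch\!\left(E,\nabla^{E}\right) \;-\; \reta\!\left(B_Y^{E}\right),
\end{equation*}
so that
\begin{equation*}
\int_Z \widehat L\!\left(TZ,\nabla^{TZ}\right)\ch\!\left(E,\nabla^{E}\right) \;\equiv\; \reta\!\left(B_Y^{E}\right) \pmod{\ZZ}.
\end{equation*}

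Applying this congruence to each of the four summands on the right-hand side of the form-level version of (\ref{o1-formula}) (respectively (\ref{o2-formula})), namely the twists by $V_\CC(x)$, $T_\CC Z$, the virtual bundle $\wedge^2(T_\CC Z)-S^2(T_\CC Z)$, and $\CC$, produces a sum of reduced $\eta$-invariants modulo $\ZZ$; linearity of $\ch$ and of the index handles the virtual bundle. Finally, multiplying the first identity through by $32$ and the second by $16$ clears the overall factors $\tfrac{1}{32}$ and $\tfrac{1}{16}$ on the analytic side, and converts the coefficients $\tfrac{1}{6}$ and $\tfrac{1}{3}$ in front of the integrands on the geometric side into the common coefficient $\tfrac{16}{3}$ asserted by the theorem.

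The sole substantive step is therefore already folded into Section \ref{proof-o1o2}: one must know that (\ref{orient-form1}) and (\ref{orient-form2}) are valid as Chern--Weil identities of differential forms, which will follow from the modularity argument applied to the $\widehat L$-Witten class in (\ref{L-Witten}) and the twisted versions (\ref{L-Witten-twofold}), (\ref{L-Witten-single}). Once this is in hand the transition to APS is entirely routine; no new cancellations or sign subtleties enter, so I do not anticipate any genuine obstacle beyond careful bookkeeping of the overall multiplicative constants.
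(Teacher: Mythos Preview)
Your proposal is correct and follows essentially the same approach as the paper: the paper simply states that since (\ref{orient-form1}) and (\ref{orient-form2}) hold on the level of differential forms, the Atiyah--Patodi--Singer index theorem yields the result, and your plan spells out exactly this argument with the bookkeeping of constants made explicit.
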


$\, $
\section{Proofs} \label{proof}
The purpose of this section is to give proofs to Theorem \ref{spinc-main}, Theorem \ref{spinc-new}, Theorem \ref{o1} and Theorem \ref{o2}. In order to conduct the proofs, we first briefly review some basic materials on the representation theory of affine $E_8$ following \cite{K1} (see also \cite{K2}).

\subsection{The basic representation of affine $E_8$} \label{affineE8} Let $\mathfrak{g}$ be the Lie algebra of $E_8$. Let $\langle\ , \rangle$ be the Killing form on $\mathfrak{g}$.
Let $\widetilde{\mathfrak{g}}$ be the affine Lie algebra corresponding to $\mathfrak{g}$ defined by
$$\widetilde{\mathfrak{g}}=\CC[t, t^{-1}]\otimes \mathfrak{g}\oplus \CC c, $$ with the bracket
$$[P(t)\otimes x+\lambda c, Q(t)\otimes y+\mu c]=P(t)Q(t)\otimes [x,y]+\langle x, y\rangle\,\mathrm{Res}_{t=0}\left(\frac{dP(t)}{dt}Q(t)\right)c.$$

Let $\widehat{\mathfrak{g}}$ be the affine Kac-Moody algebra obtained from $\widetilde{\mathfrak{g}}$ by adding a derivation $t\frac{d}{dt}$ which operates on $\CC[t, t^{-1}]\otimes \mathfrak{g}$ in an obvious way and sends $c$ to $0$.

The basic representation $V(\Lambda_0)$ is the $\widehat{\mathfrak{g}}$-module defined by the property that there is a nonzero vector $v_0$ (highest weight vector) in $V(\Lambda_0)$ such that $cv_0=v_0, (\CC[t]\oplus\CC t\frac{d}{dt})v_0=0$. Setting $V_k:=\{v\in V(\Lambda_0)| t\frac{d}{dt}=-kv\}$ gives a $\ZZ_+$-grading by finite spaces. Since $[g,d]=0$, each $V_k$ is a representation $\rho_k$ of $\mathfrak{g}$. Moreover, $\rho_1$ is the adjoint representation of $E_8.$

Let $q=e^{2\pi\ii \tau}, \tau\in \mathbb{H}$. Fix a basis for the Cartan subalgebra and let $\{z_i\}_{i=1}^8$ be the corresponding coordinates. The character of the basic representation is given by
$$\mathrm{ch}(z_1, z_2,\cdots,z_8,\tau):=\sum_{k=0}^{\infty}(\mathrm{ch}V_k)(z_1, z_2,\cdots, z_8)q^k=\varphi(\tau)^{-8}\Theta_{\mathfrak{g}}(z_1, z_2,\cdots, z_8, \tau),$$
where $\varphi(\tau)=\prod_{n=1}^\infty (1-q^n)$ so that $\eta(\tau)=q^{1/24}\varphi(\tau)$ is the Dedekind $\eta$ function; $\Theta_{\mathfrak{g}}(z_1, z_2,\cdots, z_8, \tau)$ is the theta function defined on the root lattice $Q$ by
$$\Theta_{\mathfrak{g}}(z_1, z_2,\cdots, z_8, \tau)=\sum_{\gamma\in Q}q^{|\gamma|^2/2}e^{2\pi\ii \gamma(\overrightarrow{z})},$$
where $\overrightarrow{z}=(z_1, z_2,\cdots, z_8)$.

It was proved in \cite{GL} that there is a basis for the $E_8$ root lattice such that
\be 
\Theta_{\mathfrak{g}}(z_1, \cdots. z_8,\tau)
=\frac{1}{2}\left( \prod_{l=1}^8\theta(z_l,\tau)+\prod_{l=1}^8\theta_1(z_l,\tau)+\prod_{l=1}^8\theta_2(z_l,\tau)+\prod_{l=1}^8\theta_3(z_l,\tau)\right),
\ee
where $\theta$ and $\theta_i$ ($i=1,\ 2,\ 3$) are the Jacobi theta functions (c.f. \cite{C} and \cite{HLZ}):
\be \theta(v,\tau)=2q^{1/8}\sin(\pi v)\prod_{j=1}^\infty[(1-q^j)(1-e^{2\pi \sqrt{-1}v}q^j)(1-e^{-2\pi
\sqrt{-1}v}q^j)], \ee
\be \theta_1(v,\tau)=2q^{1/8}\cos(\pi v)\prod_{j=1}^\infty[(1-q^j)(1+e^{2\pi \sqrt{-1}v}q^j)(1+e^{-2\pi
\sqrt{-1}v}q^j)], \ee
 \be \theta_2(v,\tau)=\prod_{j=1}^\infty[(1-q^j)(1-e^{2\pi \sqrt{-1}v}q^{j-1/2})(1-e^{-2\pi
\sqrt{-1}v}q^{j-1/2})], \ee
\be \theta_3(v,\tau)=\prod_{j=1}^\infty[(1-q^j)(1+e^{2\pi
\sqrt{-1}v}q^{j-1/2})(1+e^{-2\pi \sqrt{-1}v}q^{j-1/2})].\ee

The theta functions satisfy the the following
transformation laws (c.f. \cite{C}), 
\be \label{tran-0}
\theta(v,\tau+1)=e^{\pi \sqrt{-1}\over 4}\theta(v,\tau),\ \ \
\theta\left(v,-{1}/{\tau}\right)={1\over\sqrt{-1}}\left({\tau\over
\sqrt{-1}}\right)^{1/2} e^{\pi\sqrt{-1}\tau v^2}\theta\left(\tau
v,\tau\right)\ ;\ee 
\be \label{tran-1} \theta_1(v,\tau+1)=e^{\pi \sqrt{-1}\over
4}\theta_1(v,\tau),\ \ \
\theta_1\left(v,-{1}/{\tau}\right)=\left({\tau\over
\sqrt{-1}}\right)^{1/2} e^{\pi\sqrt{-1}\tau v^2}\theta_2(\tau
v,\tau)\ ;\ee 
\be \label{tran-2} \theta_2(v,\tau+1)=\theta_3(v,\tau),\ \ \
\theta_2\left(v,-{1}/{\tau}\right)=\left({\tau\over
\sqrt{-1}}\right)^{1/2} e^{\pi\sqrt{-1}\tau v^2}\theta_1(\tau
v,\tau)\ ;\ee 
\be \label{tran-3} \theta_3(v,\tau+1)=\theta_2(v,\tau),\ \ \
\theta_3\left(v,-{1}/{\tau}\right)=\left({\tau\over
\sqrt{-1}}\right)^{1/2} e^{\pi\sqrt{-1}\tau v^2}\theta_3(\tau
v,\tau)\ .\ee

\subsection{Proof of Theorem \ref{spinc-main} and Theorem \ref{spinc-new}} \label{proof-spinc}

Now we are ready to give the proofs. 

\subsubsection{Proof of Theorem \ref{spinc-main}} \label{proof of spinc-main} The proof of the statement (\ref{spinc-class}) about $p_c$ can be found in Theorem \ref{pc}. 

For the principal $E_8$ bundles $P_i, \ i=1,2,$ consider the associated bundles
$$\mathcal{V}_i=\sum_{k=0}^\infty \left(P_i\times_{\rho_k}V_k\right)q^k\in K(Z)[[q]].$$ Let $W_i=P_i\times_{\rho_1}V_1, i=1, 2,$ be the complex vector bundles associated to the adjoint representation $\rho_1$.

By the knowledge reviewed in Section \ref{affineE8}, we see that there are formally two forms $y^i_l$ ($i=1,2$ and $1\leq l \leq 8$) on $Z$ such that
\be \label{4theta} \varphi(\tau)^{8}\mathrm{ch}(\mathcal{V}_i)=\frac{1}{2}\left(\prod_{l=1}^8\theta(y^i_l,\tau)+\prod_{l=1}^8\theta_1(y^i_l,\tau)+\prod_{l=1}^8\theta_2(y^i_l,\tau)+\prod_{l=1}^8\theta_3(y^i_l,\tau)\right).\ee

Since $\theta(z,\tau)$ is an odd function about $z$, one can see that up to degree 12, the term $\prod_{l=1}^8\theta(y^i_l,\tau)$ can be dropped and therefore we have
\be \label{3theta-1}\varphi(\tau)^{8}\mathrm{ch}(\mathcal{V}_i)=\frac{1}{2}\left(\prod_{l=1}^8\theta_1(y^i_l,\tau)+\prod_{l=1}^8\theta_2(y^i_l,\tau)+\prod_{l=1}^8\theta_3(y^i_l,\tau)\right).\ee

Since $\theta_1(z,\tau), \theta_2(z,\tau)$ and $\theta_3(z,\tau)$ are all even functions about $z$, the right hand side of the above equality only contains even powers of $y^i_j$'s. Therefore $\mathrm{ch}(W_i)$ only consists of forms of degrees divisible by 4 (this is actually a basic fact about $E_8$). So
\be \label{chvi} \mathrm{ch}(\mathcal{V}_i)=1+\mathrm{ch}(W_i)q+\cdots=1+(248-c_2(W_i)+\cdots)q+\cdots
.\ee

On the other hand,
\be \label{3theta-2} \frac{1}{2}\left(\prod_{l=1}^8\theta_1(y^i_l,\tau)+\prod_{l=1}^8\theta_2(y^i_l,\tau)+\prod_{l=1}^8\theta_3(y^i_l,\tau)\right)=1+\left(240+30\sum_{l=1}^8(y^i_l)^2+\cdots\right)q+O(q^2).\ee

From (\ref{3theta-1}), (\ref{chvi}) and (\ref{3theta-2}), we have
\be \sum_{l=1}^8\left(y^i_l\right)^2=-\frac{1}{30}c_2(W_i). \ee

Let $TZ$ be the tangent bundle of $Z$ and $T_\CC Z$ be its complexification. Let $\xi$ be a rank two real oriented Euclidean vector bundle over
$Z$ carrying a Euclidean connection $\nabla^{\xi}$. Let $c=e(\xi,
\nabla^\xi)$ be the Euler form canonically associated to
$\nabla^\xi$. 

If $E$ is a complex vector bundle over $Z$, set
$\widetilde{E}=E-\mathbb{C}^{\mathrm{rk}(E)}. $ 
Recall that for an indeterminate $t$, \be \Lambda_t(E)=\CC
|_Z+tE+t^2\wedge^2(E)+\cdots,\ \ \ S_t(E)=\CC |_Z+tE+t^2
S^2(E)+\cdots, \ee are the total exterior and
symmetric powers of $E$ respectively. The following relations between these two operations hold (c.f. \cite{At}), 
\be S_t(E)=\frac{1}{\Lambda_{-t}(E)},\ \ \ \
\Lambda_t(E-F)=\frac{\Lambda_t(E)}{\Lambda_t(F)}.
\ee

Following \cite{CHZ}, set
\begin{equation*}
\Theta (T_{\CC}Z,\xi_\CC):= \left( \overset{
\infty }{\underset{m=1}{\otimes }}S_{q^{m}}(\widetilde{T_{\CC}Z}
)\right) \otimes \left( \overset{\infty }{\underset{n=1}{\otimes }}\Lambda
_{q^{n}}(\widetilde{\xi_\CC})\right)
\otimes \left( \overset{\infty }{\underset{u=1}{\otimes }}\Lambda
_{-q^{u-1/2}}(\widetilde{\xi_\CC})\right) \otimes
\left( \overset{\infty }{\underset{v=1}{\otimes }}\Lambda _{q^{v-1/2}}(
\widetilde{\xi_\CC})\right)\in K(Z)[[q]],
\end{equation*}
where $\xi_\CC$ is the complexification of $\xi$. 

Clearly, $\Theta(T_\CC Z, \xi_\CC)$ admits
a formal Fourier expansion in $q$ as \be \label{expand-B} \Theta(T_\CC Z,\xi_\CC)=\CC+B_1q+B_2q^2\cdots,\ee where
the $B_j$'s are elements in the semi-group formally generated by
complex vector bundles over $Z$. Moreover, they carry canonically
induced connections denoted by $\nabla^{B_j}$'s. Let
$\nabla^{\Theta}$ be the induced connection with
$q$-coefficients on $\Theta$.

For $1\leq i, j \leq 2$, set 
\be \label{QP1P2}
\begin{split}
&\mathcal{Q}(P_i, P_j, \xi, \tau)\\
:=&\left\{e^{\frac{1}{24}E_2(\tau)\left(p_1(TZ)-3c^2+\frac{1}{30}(c_2(W_i)+c_2(W_j))\right)}\widehat{A}(TZ)\cosh\left(\frac{c}{2}\right)\mathrm{ch}\left(\Theta(T_\CC Z, \xi_\CC)\right)\varphi(\tau)^{16}\mathrm{ch}(\mathcal{V}_i)\mathrm{ch}(\mathcal{V}_j)\right\}^{(12)}.
\end{split}
\ee
Here \be \label{expand-E2} E_2(\tau)=1-24\sum_{n=1}^{\infty}\left(\underset{d|n}{\sum}d\right)q^n\ee 
is the Eisenstein series. Unlike the other Eisenstein series $E_{2k}(\tau), k>1$, $E_2(\tau)$ is not a modular form over $SL(2,\ZZ)$, instead $E_2(\tau)$ is a quasimodular form over $SL(2,\ZZ)$, satisfying:
\be  E_2\left(\frac{a\tau+b}{c\tau+d}\right)=(c\tau+d)^2E_2(\tau)-\frac{6\sqrt{-1}c(c\tau+d)}{\pi}. \ee
In particular, we have
\be \label{tran-E2-1} E_2(\tau+1)=E_2(\tau),\ee
\be \label{tran-E2-2} E_2\left(-\frac{1}{\tau}\right)=\tau^2E_2(\tau)-\frac{6\sqrt{-1}\tau}{\pi}\ee
(c.f. Chap 2.3 in \cite{BGHZ}).

\begin{lemma}\label{Qxi} $\mathcal{Q}(P_i, P_j, \xi, \tau)$ is a modular form of weight 14 over $SL(2; \ZZ)$.
\end{lemma}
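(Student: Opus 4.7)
The plan is to rewrite $\mathcal{Q}(P_i,P_j,\xi,\tau)$, before taking the degree-twelve truncation, as a finite sum of products of Jacobi theta functions evaluated at formal Chern roots of $T_{\CC}Z$, $\xi_\CC$ and the two $E_8$ bundles, all multiplied by the single exponential $\exp\!\bigl(\tfrac{1}{24}E_2(\tau)\,\Omega\bigr)$ with
\[
\Omega:=p_1(TZ)-3c^2+\tfrac{1}{30}\bigl(c_2(W_i)+c_2(W_j)\bigr).
\]
Once this presentation is in place, modularity follows by applying the transformation laws \eqref{tran-0}--\eqref{tran-3} for the theta functions together with \eqref{tran-E2-1}--\eqref{tran-E2-2} for $E_2(\tau)$. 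The role of the $E_2$-exponential is precisely to absorb the quadratic anomalies $e^{\pi\sqrt{-1}\tau v^{2}}$ produced by every theta factor under $S:\tau\mapsto -1/\tau$; these are exactly what the quasi-modular anomaly $-6\sqrt{-1}\tau/\pi$ of $E_2(\tau)$ is designed to kill.

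Concretely, let $\{\pm 2\pi\sqrt{-1}z_j\}_{j=1}^{6}$ be the formal Chern roots of $T_\CC Z$ and $\pm 2\pi\sqrt{-1}u$ those of $\xi_\CC$. Standard theta-function manipulations (parallel to those in \cite{CHZ, HLZ, HLZ2}) give
\[
\widehat A(TZ)\cosh\!\bigl(c/2\bigr)\mathrm{ch}\!\bigl(\Theta(T_\CC Z,\xi_\CC)\bigr)=\Bigl(\prod_{j=1}^{6}z_j\frac{\theta'(0,\tau)}{\theta(z_j,\tau)}\Bigr)\cdot\prod_{a=1}^{3}\frac{\theta_a(u,\tau)}{\theta_a(0,\tau)}.
\]
For the $E_8$ part, combining \eqref{4theta} with the oddness of $\theta(\cdot,\tau)$, which forces the $\prod_l\theta(y^i_l,\tau)$ branch to contribute only monomials of degrees that are killed by the degree-twelve truncation, one obtains
\[
\varphi(\tau)^{16}\mathrm{ch}(\mathcal{V}_i)\mathrm{ch}(\mathcal{V}_j)=\frac{1}{4}\sum_{a,b=1}^{3}\prod_{l=1}^{8}\theta_a(y^i_l,\tau)\prod_{m=1}^{8}\theta_b(y^j_m,\tau).
\]
Substituting into \eqref{QP1P2} exhibits $\mathcal{Q}(P_i,P_j,\xi,\tau)$ as a sum of nine theta-function products, each decorated by the $E_2$-exponential.

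One then applies the generators of $SL(2,\ZZ)$ termwise. Invariance under $T$ is immediate from \eqref{tran-E2-1} and \eqref{tran-0}--\eqref{tran-3}, since $T$ merely permutes the $\theta_a$-branches via $\theta_2\leftrightarrow\theta_3$ while fixing $\theta_1$, so the symmetric sum over $(a,b)$ is preserved, while the eighth-root-of-unity phases cancel in matched pairs dictated by the Chern-root structure. Under $S$, every theta factor contributes $(\tau/\sqrt{-1})^{1/2}\cdot e^{\pi\sqrt{-1}\tau v^{2}}$ together with the rescaling $v\mapsto\tau v$. A direct weight count yields $(\tau/\sqrt{-1})^{14}$ at the degree-twelve component: weight $6$ from the Witten-class factor (the standard twelve-dimensional Witten-genus weight), weight $0$ from the $\xi$-correction, and weight $4$ from each of the two $E_8$-branches (the $E_8$ theta series being a weight-four modular form).

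The only step requiring genuine care is verifying that the accumulated quadratic exponential $\exp\!\bigl(\pi\sqrt{-1}\tau\cdot\widetilde\Omega\bigr)$ produced by the $S$-transformations of all theta factors, where $\widetilde\Omega$ is a universal rational multiple of $\Omega$ after converting Chern-Weil classes into the Chern-root variables $z_j,u,y^i_l,y^j_m$, is exactly cancelled by the $S$-image of $\exp\!\bigl(\tfrac{1}{24}E_2(\tau)\,\Omega\bigr)$; by \eqref{tran-E2-2} the latter equals
\[
\exp\!\Bigl(\tfrac{1}{24}\tau^{2}E_2(\tau)\,\Omega\Bigr)\cdot\exp\!\Bigl(-\tfrac{\sqrt{-1}\tau}{4\pi}\,\Omega\Bigr),
\]
whose first factor rescales $\Omega$ to match the $\tau$-scaled Chern roots while the second absorbs $\widetilde\Omega$. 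This is the universal cancellation identity underlying all modular bundle constructions of this type (compare \cite{Liu1, HLZ2}), and its verification here is the main obstacle; once carried out, it concludes that $\mathcal{Q}(P_i,P_j,\xi,\tau)$ is a modular form of weight $14$ over $SL(2,\ZZ)$.
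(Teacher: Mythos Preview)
Your proposal is correct and follows exactly the paper's approach: rewrite $\mathcal{Q}(P_i,P_j,\xi,\tau)$ as a theta-function product in the formal Chern roots (the paper's displayed formula in the proof of Lemma~\ref{Qxi}) and then invoke the transformation laws \eqref{tran-0}--\eqref{tran-3} together with \eqref{tran-E2-1}--\eqref{tran-E2-2}. You in fact supply more of the mechanism (the $T$-permutation of branches, the $S$-weight bookkeeping, and the role of the $E_2$-exponential in absorbing the quadratic anomalies) than the paper, which simply records the theta expression and asserts that the transformation laws yield weight~$14$; both versions leave the explicit cancellation to a routine check. One small wording point: your weight decomposition ``$6+0+4+4$'' is heuristic---the clean count is $(\tau/\sqrt{-1})^{8}$ from the two $E_8$ theta products plus $\tau^{6}$ from the degree-$12$ rescaling of all Chern-root variables, giving $\tau^{14}$---but this does not affect the argument.
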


\begin{proof} Let $\{\pm 2\pi \ii x_k\} (1\leq k\leq 6)$ be the formal Chern roots for $(T_\CC Z, \nabla^{T_\CC Z})$. Let $c=e(\xi,
\nabla^\xi)=2\pi \ii u$. One has
\be
\begin{split}
&\mathcal{Q}(P_i, P_j, \xi, \tau)\\
=&\left\{e^{\frac{1}{24}E_2(\tau)\left(p_1(TZ)-3c^2+\frac{1}{30}(c_2(W_i)+c_2(W_j)\right)}\widehat{A}(TZ)\cosh\left(\frac{c}{2}\right)\mathrm{ch}\left(\Theta(T_\CC Z, \xi_\CC)\right)\varphi(\tau)^{16}\mathrm{ch}(\mathcal{V}_i)\mathrm{ch}(\mathcal{V}_j)\right\}^{(12)}\\
=&\left\{e^{\frac{1}{24}E_2(\tau)\left(p_1(TZ)-3c^2+\frac{1}{30}(c_2(W_i)+c_2(W_j)\right)}\left(\prod_{k=1}^{6}\left(x_k\frac{\theta'(0,\tau)}{\theta(x_k,\tau)}\right)
\right)\frac{\theta_1(u,\tau)}{\theta_1(0,\tau)}\frac{\theta_2(u,\tau)}{\theta_2(0,\tau)}\frac{\theta_3(u,\tau)}{\theta_3(0,\tau)}\right.\\
& \left.\cdot \frac{1}{4}\left(\prod_{l=1}^8\theta_1(y^i_l,\tau)+\prod_{l=1}^8\theta_2(y^i_l,\tau)+\prod_{l=1}^8\theta_3(y^i_l,\tau)\right)\left(\prod_{l=1}^8\theta_1(y^j_l,\tau)+\prod_{l=1}^8\theta_2(y^j_l,\tau)+\prod_{l=1}^8\theta_3(y^j_l,\tau)\right)\right\}^{(12)}.
\end{split}
\ee

Then we can perform the transformation laws (\ref{tran-0})-(\ref{tran-3}) for the theta functions and the transformation laws (\ref{tran-E2-1}), (\ref{tran-E2-2}) for $E_2(\tau)$ to show that $\mathcal{Q}(P_i, P_j, \xi, \tau)$ is a modular form of weight 14 over $SL(2; \ZZ)$.
\end{proof}

$\, $

Expanding the $q$-series, using (\ref{chvi}), (\ref{expand-B}) and (\ref{expand-E2}), we have
\be \label{expansion1}
\begin{split}
&e^{\frac{1}{24}E_2(\tau)\left(p_1(TZ)-3c^2+\frac{1}{30}(c_2(W_i)+c_2(W_j)\right)}\widehat{A}(TZ)\cosh\left(\frac{c}{2}\right)\mathrm{ch}\left(\Theta(T_\CC Z, \xi_\CC)\right)\varphi(\tau)^{16}\mathrm{ch}(\mathcal{V}_i)\mathrm{ch}(\mathcal{V}_j)\\
=&\left(e^{\frac{1}{24}\left(p_1(TZ)-3c^2+\frac{1}{30}(c_2(W_i)+c_2(W_j))\right)}\right.\\
&\left.\ \ \ \ -e^{\frac{1}{24}\left(p_1(TZ)-3c^2+\frac{1}{30}(c_2(W_i)+c_2(W_j))\right)}\left(p_1(TZ)-3c^2+\frac{1}{30}(c_2(W_i)+c_2(W_j))\right)q+O(q^2)\right)\\
&\cdot \widehat{A}(TZ)\cosh\left(\frac{c}{2}\right)\mathrm{ch}(\CC+B_1q+O(q^2))(1-16q+O(q^2))(1+\mathrm{ch}(W_i)q+O(q^2))(1+\mathrm{ch}(W_j)q+O(q^2))\\
=&e^{\frac{1}{24}\left(p_1(TZ)-3c^2+\frac{1}{30}(c_2(W_i)+c_2(W_j))\right)}\widehat{A}(TZ)\cosh\left(\frac{c}{2}\right)\\
&+q\left(e^{\frac{1}{24}\left(p_1(TZ)-3c^2+\frac{1}{30}(c_2(W_i)+c_2(W_j))\right)}\widehat{A}(TZ)\cosh\left(\frac{c}{2}\right)
\mathrm{ch}(B_1-16+W_i+W_j)\right.\\
&\ \ \ \left.\ \ \ -e^{\frac{1}{24}\left(p_1(TZ)-3c^2+\frac{1}{30}(c_2(W_i)+c_2(W_j))\right)}\left(p_1(TZ)-3c^2+\frac{1}{30}(c_2(W_i)+c_2(W_j))\right)\widehat{A}(TZ)\cosh\left(\frac{c}{2}\right)\right)\\
&+O(q^2).
\end{split}
\ee

It is well known (c.f. Chap 2.1 in \cite{BGHZ}) that modular forms over $SL(2; \ZZ)$ can be expressed as polynomials of the Eisenstein series $E_4(\tau)$, $E_6(\tau)$, where
\be E_4(\tau)=1+240q+2160q^2+6720q^3+\cdots,\ee
\be E_6(\tau)=1-504q-16632q^2-122976q^3+\cdots.\ee
Their weights are 4 and 6 respectively.

Since the weight of the modular form $\mathcal{Q}(P_i, P_j, \xi, \tau)$ is 14 and the space of modular forms of weight 14 over $SL(2,\mathbb{Z})$ is 1-dimensional and spanned by $E_4^2(\tau)E_6(\tau)$ (c.f. Chap 2.1 in \cite{BGHZ}), it must be a multiple of
\be \label{expansionE4^2E6} E_4(\tau)^2E_6(\tau)=1-24q+\cdots.\ee

So from (\ref{expansion1}) and (\ref{expansionE4^2E6}), we have
\be
\begin{split}
&\left\{e^{\frac{1}{24}\left(p_1(TZ)-3c^2+\frac{1}{30}(c_2(W_i)+c_2(W_j))\right)}\widehat{A}(TZ)\cosh\left(\frac{c}{2}\right)
\mathrm{ch}(B_1-16+W_i+W_j)\right\}^{(12)}\\
&\ \ \ \ \ \ -\left\{e^{\frac{1}{24}\left(p_1(TZ)-3c^2+\frac{1}{30}(c_2(W_i)+c_2(W_j))\right)}\left(p_1(TZ)-3c^2+\frac{1}{30}(c_2(W_i)+c_2(W_j))\right)\widehat{A}(TZ)\cosh\left(\frac{c}{2}\right)\right\}^{(12)}\\
=&-24\left\{e^{\frac{1}{24}\left(p_1(TZ)-3c^2+\frac{1}{30}(c_2(W_i)+c_2(W_j))\right)}\widehat{A}(TZ)\cosh\left(\frac{c}{2}\right)\right\}^{(12)}.
\end{split}
\ee

Therefore
\be \label{spinc-main-final}
\begin{split} &\left\{\widehat{A}(TZ)\cosh\left(\frac{c}{2}\right)\mathrm{ch}(W_i+W_j+B_1+8)\right\}^{(12)}\\
=&\left(p_1(TZ)-3c^2+\frac{1}{30}(c_2(W_i)+c_2(W_j)) \right)\\
&\cdot \left\{-\frac{e^{\frac{1}{24}\left(p_1(TZ)-3c^2+\frac{1}{30}(c_2(W_i)+c_2(W_j)\right)} -1}{p_1(TZ)-3c^2+\frac{1}{30}(c_2(W_i)+c_2(W_j))}\widehat{A}(TZ)\cosh\left(\frac{c}{2}\right)\mathrm{ch}(W_i+W_j+B_1+8) \right. \\
&\left.\ \ \ \ +e^{\frac{1}{24}\left(p_1(TZ)-3c^2+\frac{1}{30}(c_2(W_i)+c_2(W_j)\right)}\widehat{A}(TZ)\cosh\left(\frac{c}{2}\right)
\right\}^{(8)}
.\end{split}
\ee

To find $B_1$, we have
\be
\begin{split}
&\Theta (T_{\CC}Z,\xi_\CC)\\
=& \left( \overset{
\infty }{\underset{m=1}{\otimes }}S_{q^{m}}(\widetilde{T_{\CC}Z}
)\right) \otimes \left( \overset{\infty }{\underset{n=1}{\otimes }}\Lambda
_{q^{n}}(\widetilde{\xi_\CC})\right)
\otimes \left( \overset{\infty }{\underset{u=1}{\otimes }}\Lambda
_{-q^{u-1/2}}(\widetilde{\xi_\CC})\right) \otimes
\left( \overset{\infty }{\underset{v=1}{\otimes }}\Lambda _{q^{v-1/2}}(
\widetilde{\xi_\CC})\right)\\
=&1+(T_\CC Z-12-3\widetilde{\xi_\CC}-\widetilde{\xi_\CC}\otimes\widetilde{\xi_\CC})q+O(q^2).
\end{split}
\ee
So 
\be \label{B1}
B_1=T_\CC Z-12-3\widetilde{\xi_\CC}-\widetilde{\xi_\CC}\otimes\widetilde{\xi_\CC}.
\ee

Plugging $B_1$ into (\ref{spinc-main-final}), we have
\be \label{spinc-main-final1}
\begin{split} &\left\{\widehat{A}(TZ)\cosh\left(\frac{c}{2}\right)\mathrm{ch}(W_i+W_j+T_\CC Z-4-3\widetilde{\xi_\CC}-\widetilde{\xi_\CC}\otimes\widetilde{\xi_\CC} )\right\}^{(12)}\\
=&\left(p_1(TZ)-3c^2+\frac{1}{30}(c_2(W_i)+c_2(W_j)) \right)\\
&\cdot \left\{-\frac{e^{\frac{1}{24}\left(p_1(TZ)-3c^2+\frac{1}{30}(c_2(W_i)+c_2(W_j)\right)} -1}{p_1(TZ)-3c^2+\frac{1}{30}(c_2(W_i)+c_2(W_j))}\widehat{A}(TZ)\cosh\left(\frac{c}{2}\right)\mathrm{ch}(W_i+W_j+T_\CC Z-4-3\widetilde{\xi_\CC}-\widetilde{\xi_\CC}\otimes\widetilde{\xi_\CC} ) \right. \\
&\left.\ \ \ \ +e^{\frac{1}{24}\left(p_1(TZ)-3c^2+\frac{1}{30}(c_2(W_i)+c_2(W_j)\right)}\widehat{A}(TZ)\cosh\left(\frac{c}{2}\right)
\right\}^{(8)}
.\end{split}
\ee

Since $\mathrm{ch}(W_i), \mathrm{ch}(W_j)$ only contribute degree $4l$ forms, we can replace $\cosh\left(\frac{c}{2}\right)$ by $e^{\frac{c}{2}}$. Then in (\ref{spinc-main-final1}), putting $W_1=W_2=V_\CC(x)$, we get
 \be \label{spinc-form1}
\begin{split} &\left\{\widehat{A}(TZ)e^{\frac{c}{2}}\mathrm{ch}(V_\CC(x))+\frac{1}{2}\widehat{A}(TZ)e^{\frac{c}{2}}\mathrm{ch}(T_\CC Z)-\frac{1}{2}\widehat{A}(TZ)e^{\frac{c}{2}}\mathrm{ch}(4+3\widetilde{\xi_\CC}+\widetilde{\xi_\CC}\otimes\widetilde{\xi_\CC})\right\}^{(12)}\\
=&\left(\frac{p_1(TZ)-3c^2}{2}+\frac{1}{30}c_2(V_\CC(x)) \right)\\
&\cdot \left\{-\frac{e^{\frac{1}{24}\left(p_1(TZ)-3c^2+\frac{1}{15}c_2(V_\CC(x))\right)} -1}{p_1(TZ)-3c^2+\frac{1}{15}c_2(V_\CC(x))}\widehat{A}(TZ)e^{\frac{c}{2}}\mathrm{ch}(\mathfrak{A})  +e^{\frac{1}{24}\left(p_1(TZ)-3c^2+\frac{1}{15}c_2(V_\CC(x))\right)}\widehat{A}(TZ)e^{\frac{c}{2}}\right\}^{(8)}
,\end{split}
\ee
where $$\mathfrak{A}=2V_\CC(x)+T_\CC Z-4-3\widetilde{\xi_\CC}-\widetilde{\xi_\CC}\otimes\widetilde{\xi_\CC}.$$

It is not hard to check that $4+3\widetilde{\xi_\CC}+\widetilde{\xi_\CC}\otimes\widetilde{\xi_\CC}=\xi_\CC\otimes \xi_\CC-\xi_\CC+2$ and $$\frac{p_1(TZ)-3c^2}{2}+\frac{1}{30}c_2(V_\CC(x))=\lambda_c+2x=C_c(x).$$ A direct computation shows that the 8-form in the right hand side of (\ref{spinc-form1}) verifies 
\be 
\begin{split}
&\left\{-\frac{e^{\frac{1}{24}\left(p_1(TZ)-3c^2+\frac{1}{15}c_2(V_\CC(x))\right)} -1}{p_1(TZ)-3c^2+\frac{1}{15}c_2(V_\CC(x))}\widehat{A}(TZ)e^{\frac{c}{2}}\mathrm{ch}(\mathfrak{A})  +e^{\frac{1}{24}\left(p_1(TZ)-3c^2+\frac{1}{15}c_2(V_\CC(x)\right)}\widehat{A}(TZ)e^{\frac{c}{2}}\right\}^{(8)}\\
=&\frac{p_c-C_c(x)^2}{24}.
\end{split}
\ee
We therefore get (\ref{spinc-mainformula}), and have completed the proof of Theorem \ref{spinc-main}.

\subsubsection{Proof of Theorem \ref{spinc-new}} \label{proof of spinc-new} For each $i$, set
\be \label{RPi}
\begin{split} 
&\mathcal{R}(P_i, \xi, \tau)\\
:=&\left\{e^{\frac{1}{24}E_2(\tau)\left(p_1(TZ)-3c^2+\frac{1}{30}c_2(W_i)\right)}\widehat{A}(TZ)\cosh\left(\frac{c}{2}\right)\mathrm{ch}\left(\Theta(T_\CC Z, \xi_\CC)\right)\varphi(\tau)^{8}\mathrm{ch}(\mathcal{V}_i)\right\}^{(12)}.
\end{split}
\ee

\begin{lemma}$\mathcal{R}(P_i, \xi, \tau)$ is a modular form of weight 10 over $SL(2; \ZZ)$.
\end{lemma}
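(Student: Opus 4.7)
The plan is to follow the strategy used in the proof of Lemma \ref{Qxi}, adapted to the present setting with only one copy of the $E_8$-character factor instead of two. First, using formal Chern roots $\{\pm 2\pi\sqrt{-1}\,x_k\}_{k=1}^{6}$ for $(T_\CC Z, \nabla^{T_\CC Z})$, writing $c = e(\xi, \nabla^\xi) = 2\pi\sqrt{-1}\,u$, and invoking the decomposition (\ref{4theta}) for $\varphi(\tau)^{8}\mathrm{ch}(\mathcal{V}_i)$ (where the $\prod_l \theta(y^i_l,\tau)$ summand contributes nothing up to degree 12 since $\theta$ is odd), one arrives at
\be
\mathcal{R}(P_i, \xi, \tau) = \left\{ e^{\frac{1}{24} E_2(\tau)\left(p_1(TZ) - 3c^2 + \frac{1}{30} c_2(W_i)\right)} \left(\prod_{k=1}^{6} x_k \frac{\theta'(0,\tau)}{\theta(x_k,\tau)}\right) \prod_{j=1}^{3} \frac{\theta_j(u,\tau)}{\theta_j(0,\tau)} \cdot \frac{1}{2}\sum_{j=1}^{3} \prod_{l=1}^{8} \theta_j(y^i_l, \tau) \right\}^{(12)}.
\ee

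Next, I would verify the two generating transformations of $SL(2;\ZZ)$ separately. Invariance under $\tau \mapsto \tau + 1$ is immediate from (\ref{tran-0})--(\ref{tran-3}) and (\ref{tran-E2-1}): the $e^{\pi\sqrt{-1}/4}$ phases coming from $\theta$ and $\theta_1$ cancel between numerator and denominator or appear an even number of times, while the swap $\theta_2 \leftrightarrow \theta_3$ preserves both the product $\prod_j \theta_j(u,\tau)/\theta_j(0,\tau)$ and the symmetrized $E_8$-sum $\sum_j \prod_l \theta_j(y^i_l,\tau)$. The substantive step is the $S$-transformation $\tau \mapsto -1/\tau$: by (\ref{tran-0})--(\ref{tran-3}), each theta factor contributes a weight prefactor $(\tau/\sqrt{-1})^{1/2}$, rescales its argument $v \mapsto \tau v$, and emits a Jacobi quadratic exponential $e^{\pi\sqrt{-1}\tau v^2}$. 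A direct count of weights --- $1$ from each of the six factors $x_k \theta'(0,\tau)/\theta(x_k,\tau)$, $0$ from each of the three factors $\theta_j(u,\tau)/\theta_j(0,\tau)$, and $1/2$ from each of the eight factors $\theta_j(y^i_l,\tau)$ --- together with the form-degree rescaling that multiplies the degree-12 component by $\tau^6$, assembles into the advertised overall modular weight $10$.

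The main work, and the only genuine obstacle, is the bookkeeping of the Jacobi quadratic exponentials. Using the formal-Chern-root identifications, which express $\sum_k x_k^2$, $u^2$, and $\sum_l (y^i_l)^2$ as explicit multiples of $p_1(TZ)$, $c^2$, and $c_2(W_i)$ respectively (the last obtained as in Section \ref{proof of spinc-main} from (\ref{3theta-2}) and (\ref{chvi})), these exponentials combine into a single factor of the form
\be
\exp\left\{ \mathrm{const} \cdot \sqrt{-1}\,\tau \left( p_1(TZ) - 3c^2 + \frac{1}{30} c_2(W_i) \right) \right\},
\ee
which is precisely compensated by the anomalous term $-\frac{6\sqrt{-1}\tau}{\pi}$ in the $S$-transformation (\ref{tran-E2-2}) of $E_2$ applied inside the exponential prefactor $e^{\frac{1}{24} E_2(\tau)(\cdots)}$. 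This is the same cancellation mechanism that produces modularity of the Witten class $\mathcal{W}(TZ)$, here merely extended by the $\xi$- and $E_8$-contributions. Once this identity is checked, the weight-$10$ modularity of $\mathcal{R}(P_i, \xi, \tau)$ under $SL(2;\ZZ)$ follows automatically.
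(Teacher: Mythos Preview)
Your approach is exactly the paper's: rewrite $\mathcal{R}(P_i,\xi,\tau)$ in terms of Jacobi theta functions and verify the $T$- and $S$-transformations via (\ref{tran-0})--(\ref{tran-3}) together with (\ref{tran-E2-1})--(\ref{tran-E2-2}); the paper's own argument is in fact just a one-line reference back to Lemma~\ref{Qxi}, so you have supplied considerably more detail than the original.

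One bookkeeping caveat: your weight count as written does not add up, since $6+0+4$ from the prefactors \emph{plus} the $\tau^{6}$ from extracting the degree-$12$ component would give $16$, not $10$. The clean accounting is that $g(v,\tau):=v\,\theta'(0,\tau)/\theta(v,\tau)$ satisfies $g(v,-1/\tau)=e^{-\pi i\tau v^{2}}g(\tau v,\tau)$ and hence carries Jacobi prefactor weight $0$ (not $1$); the prefactors then contribute $0+0+4=4$, and extracting the degree-$12$ piece supplies the remaining $\tau^{6}$. Equivalently, if you insist on assigning prefactor weight $1$ to each $\theta'(0,\tau)/\theta(x_{k},\tau)$, you must also track the compensating $\tau^{-6}$ coming from the simple poles of $\prod_{k}1/\theta(\tau x_{k},\tau)$, which cancels your form-degree $\tau^{6}$. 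This is purely a matter of how you split the bookkeeping; the mechanism you describe --- the Jacobi quadratic exponentials cancelling against the anomalous term of $E_{2}$ --- is correct and is precisely what makes the argument work.
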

\begin{proof} This can be similarly proved as Lemma \ref{Qxi} by seeing that
\be
\begin{split}
&\mathcal{R}(P_i, \xi, \tau)\\
=&\left\{e^{\frac{1}{24}E_2(\tau)\left(p_1(TZ)-3c^2+\frac{1}{30}c_2(W_i)\right)}\widehat{A}(TZ)\cosh\left(\frac{c}{2}\right)\mathrm{ch}\left(\Theta(T_\CC Z, \xi_\CC)\right)\varphi(\tau)^{8}\mathrm{ch}(\mathcal{V}_i)\right\}^{(12)}\\
=&\left\{e^{\frac{1}{24}E_2(\tau)\left(p_1(TZ)-3c^2+\frac{1}{30}c_2(W_i)\right)}\left(\prod_{l=1}^{6}\left(x_l\frac{\theta'(0,\tau)}{\theta(x_l,\tau)}\right)
\right)\frac{\theta_1(u,\tau)}{\theta_1(0,\tau)}\frac{\theta_2(u,\tau)}{\theta_2(0,\tau)}\frac{\theta_3(u,\tau)}{\theta_3(0,\tau)}\right.\\
& \left.\cdot \frac{1}{2}\left(\prod_{l=1}^8\theta_1(y^i_l,\tau)+\prod_{l=1}^8\theta_2(y^i_l,\tau)+\prod_{l=1}^8\theta_3(y^i_l,\tau)\right)\right\}^{(12)},
\end{split}\ee
and then we can perform the transformation laws (\ref{tran-0})-(\ref{tran-3}) for the theta functions and the transformation laws (\ref{tran-E2-1}), (\ref{tran-E2-2}) for $E_2(\tau)$ to show that $\mathcal{R}(P_i, \xi, \tau)$ is a modular form of weight 10 over $SL(2; \ZZ)$.
\end{proof}

$\, $

Similar to the proof of Theorem \ref{spinc-main},
expanding the $q$-series, using (\ref{chvi}), (\ref{expand-B}) and (\ref{expand-E2}), we have
\be \label{expansion2}
\begin{split} 
&e^{\frac{1}{24}E_2(\tau)\left(p_1(TZ)-3c^2+\frac{1}{30}c_2(W_i)\right)}\widehat{A}(TZ)\cosh\left(\frac{c}{2}\right)\mathrm{ch}\left(\Theta(T_\CC Z, \xi_\CC)\right)\varphi(\tau)^{8}\mathrm{ch}(\mathcal{V}_i)\\
=&\left(e^{\frac{1}{24}\left(p_1(TZ)-3c^2+\frac{1}{30}c_2(W_i)\right)}\right.\\
&\left.\ \ \ \ -e^{\frac{1}{24}\left(p_1(TZ)-3c^2+\frac{1}{30}c_2(W_i)\right)}\left(p_1(TZ)-3c^2+\frac{1}{30}c_2(W_i)\right)q+O(q^2)\right)\\
&\cdot \widehat{A}(TZ)\cosh\left(\frac{c}{2}\right)\mathrm{ch}(\CC+B_1q+O(q^2))(1-8q+O(q^2))(1+\mathrm{ch}(W_i)q+O(q^2))\\
=&e^{\frac{1}{24}\left(p_1(TZ)-3c^2+\frac{1}{30}c_2(W_i)\right)}\widehat{A}(TZ)\cosh\left(\frac{c}{2}\right)\\
&+q\left(e^{\frac{1}{24}\left(p_1(TZ)-3c^2+\frac{1}{30}c_2(W_i)\right)}\widehat{A}(TZ)\cosh\left(\frac{c}{2}\right)
\mathrm{ch}(B_1-8+W_i)\right.\\
&\ \ \ \left.\ \ \ -e^{\frac{1}{24}\left(p_1(TZ)-3c^2+\frac{1}{30}c_2(W_i)\right)}\left(p_1(TZ)-3c^2+\frac{1}{30}c_2(W_i)\right)\widehat{A}(TZ)\cosh\left(\frac{c}{2}\right)\right)\\
&+O(q^2).
\end{split}
\ee

However modular form of weight 10 must be a multiple of (c.f. Chap 2.1 in \cite{BGHZ})
\be \label{expansionE4E6} E_4(\tau)E_6(\tau)=1-264q+\cdots.\ee
So from (\ref{expansion2}) and (\ref{expansionE4E6}), we have
\be
\begin{split}
&\left\{e^{\frac{1}{24}\left(p_1(TZ)-3c^2+\frac{1}{30}c_2(W_i)\right)}\widehat{A}(TZ)\cosh\left(\frac{c}{2}\right)
\mathrm{ch}(B_1-8+W_i)\right\}^{(12)}\\
&\ \ \ \ \ \ -\left\{e^{\frac{1}{24}\left(p_1(TZ)-3c^2+\frac{1}{30}c_2(W_i)\right)}\left(p_1(TZ)-3c^2+\frac{1}{30}c_2(W_i)\right)\widehat{A}(TZ)\cosh\left(\frac{c}{2}\right)\right\}^{(12)}\\
=&-264\left\{e^{\frac{1}{24}\left(p_1(TZ)-3c^2+\frac{1}{30}c_2(W_i)\right)}\widehat{A}(TZ)\cosh\left(\frac{c}{2}\right)\right\}^{(12)}.
\end{split}
\ee

Therefore
\be
\begin{split} &\left\{\widehat{A}(TZ)\cosh\left(\frac{c}{2}\right)\mathrm{ch}(W_i+B_1+256)\right\}^{(12)}\\
=&\left(p_1(TZ)-3c^2+\frac{1}{30}c_2(W_i) \right)\\
&\cdot \left\{-\frac{e^{\frac{1}{24}\left(p_1(TZ)-3c^2+\frac{1}{30}c_2(W_i)\right)} -1}{p_1(TZ)-3c^2+\frac{1}{30}c_2(W_i)}\widehat{A}(TZ)\cosh\left(\frac{c}{2}\right)\mathrm{ch}(W_i+B_1+256) \right. \\
&\left.\ \ \ \ +e^{\frac{1}{24}\left(p_1(TZ)-3c^2+\frac{1}{30}c_2(W_i)\right)}\widehat{A}(TZ)\cosh\left(\frac{c}{2}\right)
\right\}^{(8)}
.\end{split}
\ee

Plugging in $B_1$ (see (\ref{B1})), we have
\be
\begin{split} &\left\{\widehat{A}(TZ)\cosh\left(\frac{c}{2}\right)\mathrm{ch}(W_i+T_\CC Z+244-3\widetilde{\xi_\CC}-\widetilde{\xi_\CC}\otimes\widetilde{\xi_\CC} )\right\}^{(12)}\\
=&\left(p_1(TZ)-3c^2+\frac{1}{30}c_2(W_i) \right)\\
&\cdot \left\{-\frac{e^{\frac{1}{24}\left(p_1(TZ)-3c^2+\frac{1}{30}c_2(W_i)\right)} -1}{p_1(TZ)-3c^2+\frac{1}{30}c_2(W_i)}\widehat{A}(TZ)\cosh\left(\frac{c}{2}\right)\mathrm{ch}(W_i+T_\CC Z+244-3\widetilde{\xi_\CC}-\widetilde{\xi_\CC}\otimes\widetilde{\xi_\CC}) \right. \\
&\left.\ \ \ \ +e^{\frac{1}{24}\left(p_1(TZ)-3c^2+\frac{1}{30}c_2(W_i)\right)}\widehat{A}(TZ)\cosh\left(\frac{c}{2}\right)
\right\}^{(8)}
.\end{split}
\ee

Since $\mathrm{ch}(W_i)$ only contributes degree $4l$ forms, we can replace $\cosh\left(\frac{c}{2}\right)$ by $e^{\frac{c}{2}}$. Taking $W_i=V_\CC(x)$, we have
\be \label{spinc-form2}
\begin{split} &\left\{\widehat{A}(TZ)e^{\frac{c}{2}}\mathrm{ch}(V_\CC(x))+\widehat{A}(TZ)e^{\frac{c}{2}}\mathrm{ch}(T_\CC Z)+\widehat{A}(TZ)e^{\frac{c}{2}}\mathrm{ch}(244-3\widetilde{\xi_\CC}-\widetilde{\xi_\CC}\otimes\widetilde{\xi_\CC})\right\}^{(12)}\\
=&\left(p_1(TZ)-3c^2+\frac{1}{30}c_2(V_\CC(x)) \right)\\
&\cdot \left\{-\frac{e^{\frac{1}{24}\left(p_1(TZ)-3c^2+\frac{1}{30}c_2(V_\CC(x))\right)} -1}{p_1(TZ)-3c^2+\frac{1}{30}c_2(V_\CC(x))}\widehat{A}(TZ)e^{\frac{c}{2}}\mathrm{ch}(\mathfrak{B})  +e^{\frac{1}{24}\left(p_1(TZ)-3c^2+\frac{1}{30}c_2(V_\CC(x))\right)}\widehat{A}(TZ)e^{\frac{c}{2}}\right\}^{(8)}
,\end{split}
\ee
where $$\mathfrak{B}=V_\CC(x)+T_\CC Z+244-3\widetilde{\xi_\CC}-\widetilde{\xi_\CC}\otimes\widetilde{\xi_\CC}.$$

It is not hard to check that $244-3\widetilde{\xi_\CC}-\widetilde{\xi_\CC}\otimes\widetilde{\xi_\CC}=246-\xi_\CC\otimes \xi_\CC+\xi_\CC$ and $$p_1(TZ)-3c^2+\frac{1}{30}c_2(V_\CC(x))=2\lambda_c+2x=2\widetilde C_c(x).$$ A direct computation shows that the 8-form in the right hand side of (\ref{spinc-form2}) verifies 
\h
\begin{split}
&\left\{-\frac{e^{\frac{1}{24}\left(p_1(TZ)-3c^2+\frac{1}{30}c_2(V_\CC(x))\right)} -1}{p_1(TZ)-3c^2+\frac{1}{30}c_2(V_\CC(x))}\widehat{A}(TZ)e^{\frac{c}{2}}\mathrm{ch}(\mathfrak{E})  +e^{\frac{1}{24}\left(p_1(TZ)-3c^2+\frac{1}{30}c_2(V_\CC(x))\right)}\widehat{A}(TZ)e^{\frac{c}{2}}\right\}^{(8)}\\
=&\frac{\widetilde p_c+6\lambda_c\widetilde C_c(x)-4\widetilde C_c(x)^2}{24}.
\end{split}
\e
We therefore get (\ref{spinc-newformula}), and have completed the proof of Theorem \ref{spinc-new}.

\subsection{Proof of Theorem \ref{o1} and Theorem \ref{o2}} \label{proof-o1o2}

\subsubsection{Proof of Theorem \ref{o1}} Following \cite{CHZ}, set
\be \label{Theta1}
\Theta_1 (T_{\CC}Z):= \left( \overset{\infty }{\underset{n=1}{\otimes }}\Lambda
_{q^{n}}(\widetilde{T_{\CC}Z})\right)
\in K(Z)[[q]],
\ee
\be \label{Theta2}
\Theta_2 (T_{\CC}Z):= \left( \overset{\infty }{\underset{n=1}{\otimes }}\Lambda
_{-q^{u-1/2}}(\widetilde{T_{\CC}Z})\right)
 \in K(Z)[[q^{1/2}]],
\ee
\be \label{Theta3}
\Theta_3 (T_{\CC}Z):= \left( \overset{\infty }{\underset{n=1}{\otimes }}\Lambda
_{q^{u-1/2}}(\widetilde{T_{\CC}Z})\right)
\in K(Z)[[q^{1/2}]].
\ee
Construct (\cite{Liu95})
\be \Phi(T_\CC Z)=\Theta(T_\CC Z)\otimes \Theta_1 \left( T_{\mathbb{C}}Z\right)\otimes \Theta_2 \left( T_{\mathbb{C}}Z\right)\otimes\Theta_3 \left( T_{\mathbb{C}}Z\right)\in K(Z)[[q]]\ee
and define 
\be \label{L-Witten} \mathcal{W}_{\widehat{L}}(TZ)=e^{-\frac{1}{12}E_2(\tau)\cdot p_1(TZ)}\widehat{L}(TZ)\ch (\Phi(T_\CC Z))\in H^{4*}(TZ, \QQ).
 \ee
 We call $\mathcal{W}_{\widehat{L}}(TZ)$ the {\em $\widehat{L}$-Witten class} of $TZ$. 

Perform the formal Fourier expansion in $q$ as \be \label{expand-D} \Phi(T_\CC Z)=\Theta(T_\CC Z)\otimes \Theta_1 \left( T_{\mathbb{C}}Z\right)\otimes \Theta_2 \left( T_{\mathbb{C}}Z\right)\otimes\Theta_3 \left( T_{\mathbb{C}}Z\right)=\CC+D_1q+D_2q^2\cdots,\ee where
the $D_j$'s are elements in the semi-group formally generated by
complex vector bundles over $Z$. Moreover, they carry canonically
induced connections denoted by $\nabla^{D_j}$'s. Let
$\nabla^{\Phi}$ be the induced connection with
$q$-coefficients on $\Phi$.

For $1\leq i, j \leq 2$, construct the twisted $\widehat{L}$-Witten class 
\be \label{L-Witten-twofold} e^{\frac{1}{24}E_2(\tau)\left(-2p_1(TZ)+\frac{1}{30}(c_2(W_i)+c_2(W_j))\right)}\widehat{L}(TZ)\mathrm{ch}\left(\Phi(T_\CC Z)\right)\varphi(\tau)^{16}\mathrm{ch}(\mathcal{V}_i)\mathrm{ch}(\mathcal{V}_j)\in H^{4*}(Z, \QQ)\ee 
and denote \be
\begin{split}
&\mathcal{Q}_L(P_i, P_j, \tau)\\
=&\left\{e^{\frac{1}{24}E_2(\tau)\left(-2p_1(TZ)+\frac{1}{30}(c_2(W_i)+c_2(W_j))\right)}\widehat{L}(TZ)\mathrm{ch}\left(\Phi(T_\CC Z)\right)\varphi(\tau)^{16}\mathrm{ch}(\mathcal{V}_i)\mathrm{ch}(\mathcal{V}_j)\right\}^{(12)}.
\end{split}
\ee
\begin{lemma}\label{QL}$\mathcal{Q}_L(P_i, P_j, \tau)$ is a modular form of weight 14 over $SL(2; \ZZ)$.
\end{lemma}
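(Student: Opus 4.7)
The plan is to mimic the proof of Lemma \ref{Qxi} essentially verbatim, replacing the $\widehat{A}$-type factor $\widehat{A}(TZ)\cosh(c/2)\,\ch(\Theta(T_\CC Z,\xi_\CC))$ by the $\widehat{L}$-type factor $\widehat{L}(TZ)\,\ch(\Phi(T_\CC Z))$. The first step is to rewrite everything in terms of the four Jacobi theta functions. Using formal Chern roots $\{\pm 2\pi\sqrt{-1}x_k\}_{1\leq k\leq 6}$ of $T_\CC Z$, the product expansions of $\theta,\theta_1,\theta_2,\theta_3$ together with the defining formulas (\ref{Theta1})--(\ref{Theta3}) give
$$\widehat{L}(TZ)\,\ch(\Phi(T_\CC Z)) = \prod_{k=1}^{6} x_k\,\frac{\theta'(0,\tau)}{\theta(x_k,\tau)}\cdot\frac{\theta_1(x_k,\tau)\,\theta_2(x_k,\tau)\,\theta_3(x_k,\tau)}{\theta_1(0,\tau)\,\theta_2(0,\tau)\,\theta_3(0,\tau)},$$
while (\ref{3theta-1}) expresses $\varphi(\tau)^{8}\,\ch(\mathcal{V}_i)$ as a symmetric sum of three products of eight theta functions (the $\prod\theta$ term is dropped since it only contributes beyond degree $12$).

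Second, I would verify invariance under the two generators $T:\tau\mapsto \tau+1$ and $S:\tau\mapsto -1/\tau$ of $SL(2,\ZZ)$ by applying (\ref{tran-0})--(\ref{tran-3}) to each theta factor and (\ref{tran-E2-1}), (\ref{tran-E2-2}) to $E_2$. Under $T$, the pair $(\theta_2,\theta_3)$ is interchanged while $\theta,\theta_1,E_2$ only pick up eighth roots of unity or are invariant; the symmetric combination $\prod\theta_1+\prod\theta_2+\prod\theta_3$ is preserved, so $T$-invariance is immediate. Under $S$, the same symmetric sum is again preserved (with $\theta_1\leftrightarrow\theta_2$), each theta factor produces a $(\tau/\sqrt{-1})^{1/2}$ and an exponential $e^{\pi\sqrt{-1}\tau(\cdot)^2}$, and the Chern roots $x_k,y_l^i$ are rescaled by $\tau$, which is absorbed into the degree-$12$ component as an overall $\tau^{6}$. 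Counting weights, $\widehat{L}(TZ)\,\ch(\Phi(T_\CC Z))$ contributes weight $6$ in degree $12$, $\varphi(\tau)^{16}\,\ch(\mathcal{V}_i)\,\ch(\mathcal{V}_j)$ contributes weight $8$, and the $E_2$ prefactor is weightless, giving total weight $14$.

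The main obstacle, as in Lemma \ref{Qxi}, is verifying that the exponential factor produced by the $S$-transformation of the theta functions exactly cancels the inhomogeneous shift $-6\sqrt{-1}\tau/\pi$ in $E_2(-1/\tau)=\tau^{2}E_2(\tau)-6\sqrt{-1}\tau/\pi$. A direct bookkeeping gives the net $S$-exponential as $\exp\bigl(\pi\sqrt{-1}\tau\bigl(2\sum_k x_k^{2}+\sum_l((y_l^i)^2+(y_l^j)^2)\bigr)\bigr)$, where the coefficient $2=-1+3$ reflects the six factors $1/\theta(x_k,\tau)$ in the denominator and the eighteen factors $\theta_j(x_k,\tau)$ in the numerator. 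Using the Chern-root identities $\sum_k x_k^{2}=-p_1(TZ)/(4\pi^{2})$ and $\sum_l(y_l^i)^2 = -c_2(W_i)/(120\pi^{2})$ (the latter established in Section \ref{proof of spinc-main}), this exponential is exactly the inverse of the shift that $E_2\mapsto\tau^{2}E_2-6\sqrt{-1}\tau/\pi$ produces on the prefactor $\exp\bigl(\tfrac{1}{24}E_2(\tau)(-2p_1(TZ)+\tfrac{1}{30}(c_2(W_i)+c_2(W_j)))\bigr)$. This is precisely why the coefficient $-2$ of $p_1(TZ)$ is tuned as it is in the definition of the twisted $\widehat{L}$-Witten class (\ref{L-Witten-twofold}).
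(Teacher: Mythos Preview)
Your proof is correct and follows exactly the paper's approach: rewrite $\mathcal{Q}_L$ in terms of the Jacobi theta functions and then apply the transformation laws (\ref{tran-0})--(\ref{tran-3}) together with (\ref{tran-E2-1})--(\ref{tran-E2-2}); the paper's theta expression carries an overall constant $2^6$ that you omit, and the paper records $\sum_l(y_l^i)^2=-\tfrac{1}{30}c_2(W_i)$ rather than your $-c_2(W_i)/(120\pi^{2})$, but neither discrepancy affects modularity. Your discussion of the $S$-transformation and the cancellation of the $E_2$-shift against the theta exponentials is in fact more detailed than the paper's terse ``perform the transformation laws'' sketch.
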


\begin{proof}

Let $\{\pm 2\pi \ii x_k\} (1\leq k\leq 6)$ be the formal Chern roots for $(T_\CC Z, \nabla^{T_\CC Z})$. One has
\be
\begin{split}
&\mathcal{Q}_L(P_i, P_j,  \tau)\\
=&\left\{e^{\frac{1}{24}E_2(\tau)\left(-2p_1(TZ)+\frac{1}{30}(c_2(W_i)+c_2(W_j))\right)}\widehat{L}(TZ)\mathrm{ch}\left(\Phi(T_\CC Z)\right)\varphi(\tau)^{16}\mathrm{ch}(\mathcal{V}_i)\mathrm{ch}(\mathcal{V}_j)\right\}^{(12)}\\
=&2^6\left\{e^{\frac{1}{24}E_2(\tau)\left(-2p_1(TZ)+\frac{1}{30}(c_2(W_i)+c_2(W_j)\right)}\left(\prod_{k=1}^{6}x_k\frac{\theta'(0,\tau)}{\theta(x_k,\tau)}
\frac{\theta_1(x_k,\tau)}{\theta_1(0,\tau)}\frac{\theta_2(x_k,\tau)}{\theta_2(0,\tau)}\frac{\theta_3(x_k,\tau)}{\theta_3(0,\tau)}\right)\right.\\
& \left.\cdot \frac{1}{4}\left(\prod_{l=1}^8\theta_1(y^i_l,\tau)+\prod_{l=1}^8\theta_2(y^i_l,\tau)+\prod_{l=1}^8\theta_3(y^i_l,\tau)\right)\left(\prod_{l=1}^8\theta_1(y^j_l,\tau)+\prod_{l=1}^8\theta_2(y^j_l,\tau)+\prod_{l=1}^8\theta_3(y^j_l,\tau)\right)\right\}^{(12)}.
\end{split}
\ee

Then we can perform the transformation laws (\ref{tran-0})-(\ref{tran-3}) for the theta functions and the transformation laws (\ref{tran-E2-1}), (\ref{tran-E2-2}) for $E_2(\tau)$ to show that $\mathcal{Q}_L(P_i, P_j, \tau)$ is a modular form of weight 14 over $SL(2; \ZZ)$.
\end{proof}

$\, $

Expanding the $q$-series, using (\ref{chvi}), (\ref{expand-E2}) and (\ref{expand-D}), we have
\be \label{expansion3}
\begin{split}
&e^{\frac{1}{24}E_2(\tau)\left(-2p_1(TZ)+\frac{1}{30}(c_2(W_i)+c_2(W_j)\right)}\widehat{L}(TZ)\mathrm{ch}\left(\Phi(T_\CC Z)\right)\varphi(\tau)^{16}\mathrm{ch}(\mathcal{V}_i)\mathrm{ch}(\mathcal{V}_j)\\
=&\left(e^{\frac{1}{24}\left(-2p_1(TZ)+\frac{1}{30}(c_2(W_i)+c_2(W_j))\right)}\right.\\
&\left.\ \ \ \ -e^{\frac{1}{24}\left(-2p_1(TZ)+\frac{1}{30}(c_2(W_i)+c_2(W_j))\right)}\left(-2p_1(TZ)+\frac{1}{30}(c_2(W_i)+c_2(W_j))\right)q+O(q^2)\right)\\
&\cdot \widehat{L}(TZ)\mathrm{ch}(\CC+D_1q+O(q^2))(1-16q+O(q^2))(1+\mathrm{ch}(W_i)q+O(q^2))(1+\mathrm{ch}(W_j)q+O(q^2))\\
=&e^{\frac{1}{24}\left(-2p_1(TZ)+\frac{1}{30}(c_2(W_i)+c_2(W_j))\right)}\widehat{L}(TZ)\\
&+q\left(e^{\frac{1}{24}\left(-2p_1(TZ)+\frac{1}{30}(c_2(W_i)+c_2(W_j))\right)}\widehat{L}(TZ)
\mathrm{ch}(D_1-16+W_i+W_j)\right.\\
&\ \ \ \left.\ \ \ -e^{\frac{1}{24}\left(-2p_1(TZ)+\frac{1}{30}(c_2(W_i)+c_2(W_j))\right)}\left(-2p_1(TZ)+\frac{1}{30}(c_2(W_i)+c_2(W_j))\right)\widehat{L}(TZ)\right)+O(q^2).\\
\end{split}
\ee

Since the weight of the modular form $\mathcal{Q}(P_i, P_j, \tau)$ is 14, it must be a multiple of
\be \label{expansionE4^2E6-2} E_4(\tau)^2E_6(\tau)=1-24q+\cdots.\ee

So from (\ref{expansion3}) and (\ref{expansionE4^2E6-2}), we have
\be
\begin{split}
&\left\{e^{\frac{1}{24}\left(-2p_1(TZ)+\frac{1}{30}(c_2(W_i)+c_2(W_j))\right)}\widehat{L}(TZ)
\mathrm{ch}(D_1-16+W_i+W_j)\right\}^{(12)}\\
&\ \ \ \ \ \ -\left\{e^{\frac{1}{24}\left(-2p_1(TZ)+\frac{1}{30}(c_2(W_i)+c_2(W_j))\right)}\left(-2p_1(TZ)+\frac{1}{30}(c_2(W_i)+c_2(W_j))\right)\widehat{L}(TZ)\right\}^{(12)}\\
=&-24\left\{e^{\frac{1}{24}\left(-2p_1(TZ)+\frac{1}{30}(c_2(W_i)+c_2(W_j))\right)}\widehat{L}(TZ)\right\}^{(12)}.
\end{split}
\ee

Therefore
\be \label{o1-split}
\begin{split} &\left\{\widehat{L}(TZ)\mathrm{ch}(W_i+W_j+D_1+8)\right\}^{(12)}\\
=&\left(-2p_1(TZ)+\frac{1}{30}(c_2(W_i)+c_2(W_j)) \right)\\
&\cdot \left\{-\frac{e^{\frac{1}{24}\left(-2p_1(TZ)+\frac{1}{30}(c_2(W_i)+c_2(W_j)\right)} -1}{-2p_1(TZ)+\frac{1}{30}(c_2(W_i)+c_2(W_j))}\widehat{L}(TZ)\mathrm{ch}(W_i+W_j+D_1+8) \right. \\
&\left.\ \ \ \ +e^{\frac{1}{24}\left(-2p_1(TZ)+\frac{1}{30}(c_2(W_i)+c_2(W_j)\right)}\widehat{L}(TZ)\right\}^{(8)}
.\end{split}
\ee

To find $D_1$, we have
\be
\begin{split}
&\Phi(T_{\CC}Z)\\
=& \left( \overset{
\infty }{\underset{m=1}{\otimes }}S_{q^{m}}(\widetilde{T_{\CC}Z}
)\right) \otimes \left( \overset{\infty }{\underset{n=1}{\otimes }}\Lambda
_{q^{n}}(\widetilde{T_{\CC}Z})\right)
\otimes \left( \overset{\infty }{\underset{n=1}{\otimes }}\Lambda
_{-q^{u-1/2}}(\widetilde{T_{\CC}Z})\right)
\otimes
 \left( \overset{\infty }{\underset{n=1}{\otimes }}\Lambda
_{q^{u-1/2}}(\widetilde{T_{\CC}Z})\right)\\
=&1+(2T_\CC Z+\wedge^2(T_\CC Z)-S^2(T_\CC Z)-12)q+O(q^2).
\end{split}
\ee
So 
\be \label{D1} D_1=2T_\CC Z+\wedge^2(T_\CC Z)-S^2(T_\CC Z)-12.\ee

Plugging $D_1$ into (\ref{o1-split}), we have
\be 
\begin{split} &\left\{\widehat{L}(TZ)\mathrm{ch}(W_i+W_j+2T_\CC Z+\wedge^2(T_\CC Z)-S^2(T_\CC Z)-4)\right\}^{(12)}\\
=&\left(-2p_1(TZ)+\frac{1}{30}(c_2(W_i)+c_2(W_j)) \right)\\
&\cdot \left\{-\frac{e^{\frac{1}{24}\left(-2p_1(TZ)+\frac{1}{30}(c_2(W_i)+c_2(W_j)\right)} -1}{-2p_1(TZ)+\frac{1}{30}(c_2(W_i)+c_2(W_j))}\widehat{L}(TZ)\mathrm{ch}(W_i+W_j+2T_\CC Z+\wedge^2(T_\CC Z)-S^2(T_\CC Z)-4)+\right. \\
&\left.\ \ \ \ +e^{\frac{1}{24}\left(-2p_1(TZ)+\frac{1}{30}(c_2(W_i)+c_2(W_j)\right)}\widehat{L}(TZ)
\right\}^{(8)}
.\end{split}
\ee

Putting $W_1=W_2=V_\CC(x)$, we get
\be \label{orient-form1}
\begin{split}
&\left\langle 2\widehat L(TZ)\ch (V_\CC(x))+2\widehat L(TZ)\ch (T_\CC Z)+\widehat L(TZ)\ch(\wedge^2(T_\CC Z)-S^2(T_\CC Z))-4\widehat L(TZ), [Z] \right\rangle\\
=& \left(-2p_1(TZ)+\frac{1}{15}c_2(V_\CC(x)) \right)\\
&\cdot \left\{-\frac{e^{\frac{1}{24}\left(-2p_1(TZ)+\frac{1}{15}c_2(V_\CC(x)) \right)} -1}{-2p_1(TZ)+\frac{1}{15}c_2(V_\CC(x)) }\widehat{L}(TZ)\mathrm{ch}(\mathfrak{C})+e^{\frac{1}{24}\left(-2p_1(TZ)+\frac{1}{15}c_2(V_\CC(x)) \right)}\widehat{L}(TZ)
\right\}^{(8)},
\end{split}
\ee
where $$\mathfrak{C}=2V_\CC(x)+2T_\CC Z+\wedge^2(T_\CC Z)-S^2(T_\CC Z)-4.$$

Note that the 4-form $$-2p_1(TZ)+\frac{1}{15}c_2(V_\CC(x))=2D(x).$$ A direct computation shows that the 8-form in the right hand side of (\ref{orient-form1}) verifies 
\be 
\begin{split}
&\left\{-\frac{e^{\frac{1}{24}\left(-2p_1(TZ)+\frac{1}{15}c_2(V_\CC(x)) \right)} -1}{-2p_1(TZ)+\frac{1}{15}c_2(V_\CC(x)) }\widehat{L}(TZ)\mathrm{ch}(\mathfrak{C})+e^{\frac{1}{24}\left(-2p_1(TZ)+\frac{1}{15}c_2(V_\CC(x)) \right)}\widehat{L}(TZ)
\right\}^{(8)}\\
=&\frac{8}{3}\left(4p_1^2-7p_2-D(x)^2 \right).
\end{split}
\ee
We therefore get (\ref{o1-formula}), and have completed the proof of Theorem \ref{o1}.

\subsubsection{Proof of Theorem \ref{o2}}For each $i$, construct the twisted $\widehat{L}$-Witten class 
\be \label{L-Witten-single} e^{\frac{1}{24}E_2(\tau)\left(-2p_1(TZ)+\frac{1}{30}c_2(W_i)\right)}\widehat{L}(TZ)\mathrm{ch}\left(\Phi(T_\CC Z)\right)\varphi(\tau)^{8}\mathrm{ch}(\mathcal{V}_i)\in H^{4*}(Z, \QQ)\ee
and set
\be
\mathcal{R}_L(P_i, \tau)=\left\{e^{\frac{1}{24}E_2(\tau)\left(-2p_1(TZ)+\frac{1}{30}c_2(W_i)\right)}\widehat{L}(TZ)\mathrm{ch}\left(\Phi(T_\CC Z)\right)\varphi(\tau)^{8}\mathrm{ch}(\mathcal{V}_i)\right\}^{(12)}.
\ee

\begin{lemma}$\mathcal{R}_L(P_i, \tau)$ is a modular form of weight 10 over $SL(2; \ZZ)$.
\end{lemma}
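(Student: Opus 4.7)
The proof will follow the same template as the proof of Lemma \ref{QL} (and the earlier lemma on $\mathcal{R}(P_i, \xi, \tau)$). The plan is to rewrite $\mathcal{R}_L(P_i, \tau)$ as an explicit expression in Jacobi theta functions, then exploit the known transformation laws (\ref{tran-0})--(\ref{tran-3}) together with the quasimodular transformations (\ref{tran-E2-1}), (\ref{tran-E2-2}) of $E_2(\tau)$ to verify the modular behavior.

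First, let $\{\pm 2\pi\sqrt{-1}\, x_k\}_{k=1}^{6}$ denote the formal Chern roots of $(T_\CC Z, \nabla^{T_\CC Z})$, and let $y^i_l$ $(1 \leq l \leq 8)$ be the formal roots associated to $W_i$ via (\ref{4theta}). Using the product expressions of $\widehat{L}$, of $\mathrm{ch}(\Theta(T_\CC Z))$, $\mathrm{ch}(\Theta_j(T_\CC Z))$ $(j=1,2,3)$ in terms of $\theta$ and $\theta_j$ evaluated at the $x_k$, together with (\ref{3theta-1}) applied to $\varphi(\tau)^{8}\mathrm{ch}(\mathcal{V}_i)$ (where the $\prod \theta(y^i_l,\tau)$ piece may again be dropped since it contributes only to degrees exceeding 12), I would rewrite the degree-$12$ component as
\begin{equation*}
\begin{split}
\mathcal{R}_L(P_i,\tau) = 2^{6}\Big\{&\, e^{\frac{1}{24}E_2(\tau)(-2p_1(TZ)+\frac{1}{30}c_2(W_i))}
\prod_{k=1}^{6}\Bigl(x_k\tfrac{\theta'(0,\tau)}{\theta(x_k,\tau)}\tfrac{\theta_1(x_k,\tau)}{\theta_1(0,\tau)}\tfrac{\theta_2(x_k,\tau)}{\theta_2(0,\tau)}\tfrac{\theta_3(x_k,\tau)}{\theta_3(0,\tau)}\Bigr)\\
&\cdot \tfrac{1}{2}\Bigl(\textstyle\prod_{l=1}^{8}\theta_1(y^i_l,\tau)+\prod_{l=1}^{8}\theta_2(y^i_l,\tau)+\prod_{l=1}^{8}\theta_3(y^i_l,\tau)\Bigr)\Big\}^{(12)}.
\end{split}
\end{equation*}

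Next, I would apply $\tau \mapsto \tau+1$, where the only effect on the $\theta_j$'s is a permutation (namely $\theta_1$ fixed and $\theta_2 \leftrightarrow \theta_3$) together with the common scalar $e^{\pi\sqrt{-1}/4}$, and where $E_2(\tau+1)=E_2(\tau)$; the permutation symmetry of the $E_8$ expression and an easy count of $\theta$ factors gives $\mathcal{R}_L(P_i,\tau+1)=\mathcal{R}_L(P_i,\tau)$. The substantive step is the transformation $\tau \mapsto -1/\tau$: the theta factors contribute $(\tau/\sqrt{-1})^{6+4}=(\tau/\sqrt{-1})^{10}$ (six pairs coming from $\theta'(0)/\theta(x_k)$ for the $TZ$-part, which carries weight $1$ per root, and eight factors of weight $\tfrac12$ from the $E_8$-part), together with anomaly exponentials of the form $e^{\pi\sqrt{-1}\tau(\sum 2x_k^2)}$ and $e^{\pi\sqrt{-1}\tau \sum (y^i_l)^2}$. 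Using $\sum 2x_k^2 = -\frac{1}{4\pi^2}\cdot(-2p_1(TZ))/2$ and $\sum (y^i_l)^2=-\frac{1}{30}c_2(W_i)$ (up to the standard normalization by $(2\pi\sqrt{-1})^2$), these anomaly factors are exactly canceled by the anomaly term $-\frac{6\sqrt{-1}\tau}{\pi}$ appearing in (\ref{tran-E2-2}) when applied to the prefactor $e^{\frac{1}{24}E_2(\tau)(-2p_1(TZ)+\frac{1}{30}c_2(W_i))}$. This cancellation is the main mechanism and mirrors verbatim the analogous cancellation in the proof of Lemma \ref{Qxi}.

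The only step requiring genuine (though routine) verification is this anomaly bookkeeping, i.e.\ checking that the coefficient $-2p_1(TZ)+\frac{1}{30}c_2(W_i)$ placed in front of $E_2(\tau)/24$ is precisely the one dictated by the degrees appearing in the exponentials produced by the theta transformation laws applied to the six $TZ$-roots and eight $E_8$-roots; once that is confirmed, the overall transformation factor is $\tau^{10}$, so $\mathcal{R}_L(P_i,\tau)$ is modular of weight $10$ over $SL(2,\ZZ)$.
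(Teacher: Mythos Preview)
Your proposal is correct and follows essentially the same approach as the paper: you rewrite $\mathcal{R}_L(P_i,\tau)$ in exactly the theta-function form the paper displays, and then invoke the transformation laws (\ref{tran-0})--(\ref{tran-3}) and (\ref{tran-E2-1}), (\ref{tran-E2-2}). The paper's proof is terser---it simply records the theta expression and says ``perform the transformation laws''---whereas you spell out the $\tau\mapsto\tau+1$ permutation symmetry and the anomaly-cancellation mechanism under $\tau\mapsto -1/\tau$, but the argument is the same.
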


\begin{proof}This can be similarly proved as Lemma \ref{QL} by seeing that
\be
\begin{split}
&\mathcal{R}_L(P_i, \tau)\\
=&\left\{e^{\frac{1}{24}E_2(\tau)\left(-2p_1(TZ)+\frac{1}{30}c_2(W_i)\right)}\widehat{L}(TZ)\mathrm{ch}\left(\Phi(T_\CC Z)\right)\varphi(\tau)^{8}\mathrm{ch}(\mathcal{V}_i)\right\}^{(12)}\\
=&2^6\left\{e^{\frac{1}{24}E_2(\tau)\left(-2p_1(TZ)+\frac{1}{30}c_2(W_i)\right)}\left(\prod_{k=1}^{6}x_k\frac{\theta'(0,\tau)}{\theta(x_k,\tau)}
\frac{\theta_1(x_k,\tau)}{\theta_1(0,\tau)}\frac{\theta_2(x_k,\tau)}{\theta_2(0,\tau)}\frac{\theta_3(x_k,\tau)}{\theta_3(0,\tau)}\right)\right.\\
& \left.\cdot \frac{1}{2}\left(\prod_{l=1}^8\theta_1(y^i_l,\tau)+\prod_{l=1}^8\theta_2(y^i_l,\tau)+\prod_{l=1}^8\theta_3(y^i_l,\tau)\right)\right\}^{(12)},
\end{split}\ee
and then we can perform the transformation laws (\ref{tran-0})-(\ref{tran-3}) for the theta functions and the transformation laws (\ref{tran-E2-1}), (\ref{tran-E2-2}) for $E_2(\tau)$ to show that $\mathcal{R}_L(P_i, \tau)$ is a modular form of weight 10 over $SL(2; \ZZ)$.
\end{proof}

$\, $

Expanding the $q$-series, using (\ref{chvi}), (\ref{expand-E2}) and (\ref{expand-D}), we have
\be \label{expansion4}
\begin{split}
&e^{\frac{1}{24}E_2(\tau)\left(-2p_1(TZ)+\frac{1}{30}c_2(W_i)\right)}\widehat{L}(TZ)\mathrm{ch}\left(\Phi(T_\CC Z)\right)\varphi(\tau)^{8}\mathrm{ch}(\mathcal{V}_i)\\
=&\left(e^{\frac{1}{24}\left(-2p_1(TZ)+\frac{1}{30}c_2(W_i)\right)}\right.\\
&\left.\ \ \ \ -e^{\frac{1}{24}\left(-2p_1(TZ)+\frac{1}{30}c_2(W_i)\right)}\left(-2p_1(TZ)+\frac{1}{30}c_2(W_i)\right)q+O(q^2)\right)\\
&\cdot \widehat{L}(TZ)\mathrm{ch}(\CC+D_1q+O(q^2))(1-8q+O(q^2))(1+\mathrm{ch}(W_i)q+O(q^2))\\
=&e^{\frac{1}{24}\left(-2p_1(TZ)+\frac{1}{30}c_2(W_i)\right)}\widehat{L}(TZ)\\
&+q\!\left(\!e^{\frac{1}{24}\left(-2p_1(TZ)+\frac{1}{30}c_2(W_i)\right)}\widehat{L}(TZ)
\mathrm{ch}(D_1\!\!-8+W_i)\!-\!e^{\frac{1}{24}\left(-2p_1(TZ)+\frac{1}{30}c_2(W_i)\right)}\left(\!-2p_1(TZ)\!+\!\frac{1}{30}c_2(W_i)\right)\widehat{L}(TZ)\right)\\
&+O(q^2).
\end{split}
\ee

However modular form of weight 10 must be a multiple of 
\be \label{expansionE4E6-2} E_4(\tau)E_6(\tau)=1-264q+\cdots. \ee So from (\ref{expansion4}) and (\ref{expansionE4E6-2})
 we have
\be
\begin{split}
&\left\{e^{\frac{1}{24}\left(-2p_1(TZ)+\frac{1}{30}c_2(W_i)\right)}\widehat{L}(TZ)
\mathrm{ch}(D_1-8+W_i)\right\}^{(12)}\\
&\ \ \ \ \ \ -\left\{e^{\frac{1}{24}\left(-2p_1(TZ)+\frac{1}{30}c_2(W_i)\right)}\left(-2p_1(TZ)+\frac{1}{30}c_2(W_i)\right)\widehat{L}(TZ)\right\}^{(12)}\\
=&-264\left\{e^{\frac{1}{24}\left(-2p_1(TZ)+\frac{1}{30}c_2(W_i)\right)}\widehat{L}(TZ)\right\}^{(12)}.
\end{split}
\ee

Therefore
\be
\begin{split} &\left\{\widehat{L}(TZ)\mathrm{ch}(W_i+D_1+256)\right\}^{(12)}\\
=&\left(-2p_1(TZ)+\frac{1}{30}c_2(W_i) \right)\\
&\cdot \left\{-\frac{e^{\frac{1}{24}\left(-2p_1(TZ)+\frac{1}{30}c_2(W_i)\right)} -1}{-2p_1(TZ)+\frac{1}{30}c_2(W_i)}\widehat{L}(TZ)\mathrm{ch}(W_i+D_1+256)+e^{\frac{1}{24}\left(-2p_1(TZ)+\frac{1}{30}c_2(W_i)\right)}\widehat{L}(TZ)\right\}^{(8)}
.\end{split}
\ee

Plugging in $D_1$ (see (\ref{D1})), we have
\be 
\begin{split} &\left\{\widehat{L}(TZ)\mathrm{ch}(W_i+2T_\CC Z+\wedge^2(T_\CC Z)-S^2(T_\CC Z)+244)\right\}^{(12)}\\
=&\left(-2p_1(TZ)+\frac{1}{30}c_2(W_i) \right)\\
&\cdot \left\{-\frac{e^{\frac{1}{24}\left(-2p_1(TZ)+\frac{1}{30}c_2(W_i)\right)} -1}{-2p_1(TZ)+\frac{1}{30}c_2(W_i)}\widehat{L}(TZ)\mathrm{ch}(W_i+2T_\CC Z+\wedge^2(T_\CC Z)-S^2(T_\CC Z)+244)\right. \\
&\left.\ \ \ \ +e^{\frac{1}{24}\left(-2p_1(TZ)+\frac{1}{30}c_2(W_i)\right)}\widehat{L}(TZ)
\right\}^{(8)}
.\end{split}
\ee

Taking $W_i=V_\CC(x)$, we have
\be \label{orient-form2}
\begin{split} &\left\{\widehat{L}(TZ)\mathrm{ch}(V_\CC(x)+2T_\CC Z+\wedge^2(T_\CC Z)-S^2(T_\CC Z)+244)\right\}^{(12)}\\
=&\left(-2p_1(TZ)+\frac{1}{30}c_2(V_\CC(x)) \right)\\
&\cdot \left\{-\frac{e^{\frac{1}{24}\left(-2p_1(TZ)+\frac{1}{30}c_2(V_\CC(x))\right)} -1}{-2p_1(TZ)+\frac{1}{30}c_2(V_\CC(x))}\widehat{L}(TZ)\mathrm{ch}(\mathfrak{D})+e^{\frac{1}{24}\left(-2p_1(TZ)+\frac{1}{30}c_2(V_\CC(x))\right)}\widehat{L}(TZ)
\right\}^{(8)},
\end{split}
\ee
where $$\mathfrak{D}=V_\CC(x)+2T_\CC Z+\wedge^2(T_\CC Z)-S^2(T_\CC Z)+244.$$ 

Note that the 4-form $$-2p_1(TZ)+\frac{1}{30}c_2(V_\CC(x))=2\widetilde D(x).$$ A direct computation shows that the 8-form in the right side of (\ref{orient-form2}) verifies
\h
\begin{split}
&\left\{-\frac{e^{\frac{1}{24}\left(-2p_1(TZ)+\frac{1}{30}c_2(V_\CC(x))\right)} -1}{-2p_1(TZ)+\frac{1}{30}c_2(V_\CC(x))}\widehat{L}(TZ)\mathrm{ch}(\mathfrak{D})+e^{\frac{1}{24}\left(-2p_1(TZ)+\frac{1}{30}c_2(V_\CC(x))\right)}\widehat{L}(TZ)
\right\}^{(8)}\\
=&\frac{8}{3}\left(p_1^2-7p_2-6p_1\widetilde D(x)-4\widetilde D(x)^2\right).
\end{split}
\e
We then get (\ref{o2-formula}), and have completed the proof of Theorem \ref{o2}.

\section{The characteristic classes in the cubic forms} \label{coeff}
The spin$^c$ characteristic classes are determined by Duan \cite{Duan18} by computing the integral cohomology of its classifying space $BSpin^{c}$. Let $c\in H^2(BSpin^{c})$ be the class with the mod $2$ reduction 
\[
c\equiv \omega_2~{\rm mod}~2,
\]
where $\omega_2$ is the second Stiefel-Whitney class. Then by a theorem of Duan \cite{Duan18} (c.f. Thomas \cite{Thomas62})
\begin{equation}\label{spinccohom}
H^\ast(BSpin^c)\cong \mathbb{Z}[c, q_1, q_2, q_3, \ldots ] \oplus ({\rm the}~ 2{\rm -torsion}~{\rm part}),
\end{equation}
where $q_i$ is called the {\it the $i$-th universal spin$^c$ class} with ${\rm deg}(q_i)=4i$. The spin$^c$ classes determine the Pontrjagin classes. In the low dimensions, we have
\be\label{pqforeq}
\begin{split}
&p_1=2q_1+c^2,\\
&p_2=2q_2+q_1^2,\\
&p_3=q_3.
\end{split}
\ee
The relations between spin$^c$ classes and Stiefel-Whitney classes can be described by the mod $2$ reductions of spin$^c$ classes (c.f. Benson-Wood \cite{BW95}). In the low dimensions,
\be\label{wqforeq}
\begin{split}
&q_1\equiv \omega_4 ~{\rm mod}~2,\\
&q_2\equiv \omega_8~{\rm mod}~2,\\
&q_3\equiv \omega_6^2~{\rm mod}~2.
\end{split}
\ee
To obtain spin characteristic classes, we can simply let $c=0$ in (\ref{spinccohom}) and (\ref{pqforeq}). In this case, (\ref{wqforeq}) is still valid.
Notice that in Freed-Hopkins \cite{FH19}, they denoted $q_1$ by $\lambda$ and $q_2$ by $p$.
With the above in hand, it is easy to calculate the following

\begin{theorem} \label{pc}
Let $M$ be any spin$^c$ manifold with the determinant class $c\in H^2(M)$. \newline
(i) One has
\[
4p_2-p_1^2-6p_1c^2+39c^4=8(q_2-2q_1c^2+4c^4).
\]
Hence,
\[
p_c=\frac{4p_2-p_1^2-6p_1c^2+39c^4}{8}
\]
is well defined and
\[\label{}
p_c\equiv \omega_8~{\rm mod}~2.
\]
(ii) One has
\[ 4p_2-7p_1^2+30p_1c^2-15c^4=8(q_2-3q_1^2+4q_1c^2+c^4).\\
\]
Hence
\[ 
\widetilde p_c=\frac{4p_2-7p_1^2+30p_1c^2-15c^4}{8}
\]
is well defined and 
\[\widetilde p_c\equiv \omega_8+\omega_4^2+\omega_2^4~{\rm mod}~2.\]
(iii) $p_c$ and $\widetilde p_c$ satisfy
\[ 
\widetilde p_c=p_c-3\lambda_c^2,
\]
where
\[
\lambda_c=\frac{1}{2}(p_1-3c^2)=q_1-c^2,
\]
and 
\[
\lambda_c\equiv \omega_4+\omega_2^2~{\rm mod}~2.
\]
\end{theorem}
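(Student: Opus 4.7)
The plan is to treat the theorem as a purely algebraic computation in the polynomial ring $\mathbb{Z}[c, q_1, q_2, \ldots]$ sitting inside $H^*(BSpin^c;\mathbb{Z})$, modulo its $2$-torsion, using the relations (\ref{pqforeq}) between Pontrjagin classes and universal spin$^c$ classes together with the mod $2$ reductions (\ref{wqforeq}). Since (\ref{spinccohom}) shows that $H^*(BSpin^c)$ is generated by $c$ and the $q_i$ modulo $2$-torsion, verifying each identity in this ring suffices to establish it for any spin$^c$ manifold $M$.

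For (i), I will substitute $p_1=2q_1+c^2$ and $p_2=2q_2+q_1^2$ into $4p_2-p_1^2-6p_1c^2+39c^4$ and expand. The $q_1^2$ terms from $4p_2$ and $p_1^2$ cancel, and the remaining terms collect into $8(q_2-2q_1c^2+4c^4)$. This simultaneously shows that the class $p_c\in H^8(M;\mathbb{Z})$ defined by dividing by $8$ makes sense as an integral class (it is actually expressed via integral classes $q_2,q_1,c$), and reducing $q_2-2q_1c^2+4c^4$ modulo $2$ leaves only $q_2\equiv w_8$, giving the congruence $p_c\equiv w_8~({\rm mod}~2)$. Part (ii) is the same kind of bookkeeping: expanding $4p_2-7p_1^2+30p_1c^2-15c^4$ yields $8(q_2-3q_1^2+4q_1c^2+c^4)$, and its mod $2$ reduction is $q_2+q_1^2+c^4\equiv w_8+w_4^2+w_2^4$ using $c\equiv w_2~({\rm mod}~2)$ and the fact that odd multiples survive mod $2$ while even multiples vanish.

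For (iii), the identity $\lambda_c=\frac12(p_1-3c^2)=q_1-c^2$ is immediate from $p_1=2q_1+c^2$, and $\lambda_c\equiv w_4+w_2^2~({\rm mod}~2)$ follows directly from $q_1\equiv w_4$ and $c\equiv w_2$. The relation $\widetilde p_c=p_c-3\lambda_c^2$ is then verified by squaring $\lambda_c=q_1-c^2$ to get $\lambda_c^2=q_1^2-2q_1c^2+c^4$ and subtracting $3\lambda_c^2$ from the closed form $p_c=q_2-2q_1c^2+4c^4$ obtained in (i); the result collapses to exactly $q_2-3q_1^2+4q_1c^2+c^4=\widetilde p_c$.

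There is no conceptual obstacle here; the only thing to be careful about is the arithmetic of integer coefficients when clearing the factor of $8$, and the correct use of (\ref{wqforeq}) when passing from spin$^c$ classes to Stiefel-Whitney classes modulo $2$. Accordingly, I expect the proof to consist of three short computations, one per part, with (i) and (ii) being parallel expansions and (iii) following formally from them.
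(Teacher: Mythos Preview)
Your proposal is correct and follows exactly the approach the paper takes: the paper simply says ``With the above in hand, it is easy to calculate the following'' and states Theorem~\ref{pc} without further argument, so the intended proof is precisely the substitution of (\ref{pqforeq}) into the given expressions followed by the mod~$2$ reductions via (\ref{wqforeq}) and $c\equiv w_2$. Your expansions are all accurate, including the arithmetic in (i) and (ii) and the verification of $\widetilde p_c=p_c-3\lambda_c^2$ in (iii).
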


\begin{theorem}
For any orientable manifold,
\[\label{}
\begin{split}
&4p_1^2-7p_2\equiv \omega_4^2 ~{\rm mod}~2,\\
&p_1^2-7p_2\equiv \omega_2^4+\omega_4^2 ~{\rm mod}~2.
\end{split}
\]
\end{theorem}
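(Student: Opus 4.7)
The plan is to reduce both congruences to the single well-known mod 2 identity $p_i \equiv w_{2i}^2 \pmod 2$ for any real vector bundle, which for $i=1,2$ gives $p_1 \equiv w_2^2$ and $p_2 \equiv w_4^2$, and then simply substitute.

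First I would record the auxiliary fact. For the tangent bundle $TM$ of an orientable manifold we have $p_i(TM) = (-1)^i c_{2i}(T_\CC M)$, where $T_\CC M = TM \otimes_\RR \CC$. Chern classes reduce mod 2 to Stiefel-Whitney classes of the underlying real bundle, i.e.\ $c_j(F) \equiv w_{2j}(F_\RR) \pmod 2$ and $w_{2j+1}(F_\RR) = 0$ for any complex bundle $F$. Applying this to $F = T_\CC M$ and using $(T_\CC M)_\RR \cong TM \oplus TM$, the Whitney sum formula gives
\begin{equation*}
p_i(TM) \equiv w_{4i}(TM \oplus TM) = \sum_{a+b=4i} w_a(TM)\, w_b(TM) \equiv w_{2i}(TM)^2 \pmod 2,
\end{equation*}
since all off-diagonal terms come in pairs and so vanish mod 2. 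In particular $p_1 \equiv w_2^2$ and $p_2 \equiv w_4^2$ mod 2, so also $p_1^2 \equiv w_2^4$ mod 2.

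Now I would substitute. Using $4 \equiv 0$ and $-7 \equiv 1$ modulo 2,
\begin{equation*}
4 p_1^2 - 7 p_2 \equiv 0 - 7 w_4^2 \equiv w_4^2 \pmod 2,
\end{equation*}
and
\begin{equation*}
p_1^2 - 7 p_2 \equiv w_2^4 - 7 w_4^2 \equiv w_2^4 + w_4^2 \pmod 2,
\end{equation*}
which are the two desired congruences.

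There is no real obstacle here: the argument is purely formal once the relation $p_i \equiv w_{2i}^2 \pmod 2$ is in hand. The only point that warrants care is that the cross terms $w_a w_b$ with $a \neq b$ really do drop out in $w_{4i}(TM\oplus TM)$; this is immediate because they pair up, but it is worth stating explicitly so the reader sees why no $w_2 w_6$ term appears in the reduction of $p_2$.
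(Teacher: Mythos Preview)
Your proof is correct. The paper actually states this theorem without proof, so there is nothing to compare against; your argument via the standard relation $p_i \equiv w_{2i}^2 \pmod 2$ (Milnor--Stasheff, Problem 15-A) is precisely the expected one, and your derivation of that relation from $w_{4i}(TM\oplus TM)$ is clean and complete.
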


\section{Obstruction classes of spin$^{\xi}$ and spin$^{\omega_2}$ structures} \label{sectionspinw2}

\begin{definition}\label{spinxidef}
An oriented closed manifold $M$ is called {\bf spin$^{\xi}$} if its second Stiefel-Whitney class $w_{2}(M)$ can be realized as that of some real vector bundle $\xi$ of rank $2$ over $M$, that is,
\[
\omega_2(M)=\omega_2(\xi).
\]
\end{definition}
The concept spin$^{\xi}$ is a generalization of spin$^c$. Indeed, $M$ is spin$^c$ when $\xi$ can be chosen to be orientable. However, there are non-spin$^c$ spin$^{\xi}$ manifolds.
\begin{definition}\label{spinw2def}
An oriented closed manifold $M$ is called {\bf spin$^{\omega_2}$} if $\omega_{2}(M)$ can be realized as that of some nonorientable real vector bundle $\xi$ of rank $2$ over $M$.
\end{definition}

Recall that the obstruction class to spin$^c$ structure is the third integral Stiefel-Whitney class $W_{3}(M)\in H^3(M;\mathbb{Z})$. 
In contrast, we need to use \textit{cohomology with local coefficients} (or {\it twisted cohomology}) to investigate the obstructions of spin$^{\xi}$ and spin$^{\omega_2}$ structures. There are two standard ways to define cohomology with local coefficients: via module over the group ring of fundamental group, or via \textit{bundle of groups} (for instance, see Hatcher, Section $3.{\rm H}$ of \cite{Hatcher}). They correspond to each other in a natural way.

Here, we deal with the obstruction problem for the homotopy lifting diagram
\begin{gather}
\begin{aligned}
\xymatrix{
& BO(2)\ar[d]^{\omega_2}  \\
M \ar[r]_{ \omega_2(M)\ \ \ \  \ } \ar@{-->}[ru]^{f}  &K(\mathbb{Z}/2\mathbb{Z},2),
}
\end{aligned}
\label{spinw2liftdia}
\end{gather}
where $K(\mathbb{Z}/2\mathbb{Z},2)$ is Eilenberg-MacLane space, and $\omega_2$ represents the second universal Stiefel-Whitney class. 
For the classifying space $BO(2)$, the first universal Stiefel-Whitney class 
\[
\omega_1\in H^1(BO(2);\mathbb{Z}/2\mathbb{Z})\cong {\rm Hom}(\pi_1(BO(2)), {\rm Aut}(\mathbb{Z})\cong \mathbb{Z}/2\mathbb{Z})
\]
determines a bundle of groups $\mathbb{Z}^{\omega_1}\rightarrow BO(2)$ with fibre $\mathbb{Z}$. Moreover, $BO(2)$ is a \textit{generalized Eilenberg-MacLane space} in the sense of Gitler \cite{Gitler63}, and by Theorem $7.18$ of \cite{Gitler63}, for any $t\in H^1(M;\mathbb{Z}/2\mathbb{Z})\cong {\rm Hom}\big(\pi_1(M),\pi_1(BO(2))\big)$
\begin{equation}\label{trepeq}
[M, BO(2)]_{t}\cong H^2(M; \mathbb{Z}^{t}),
\end{equation}
where the set $[M, BO(2)]_{t}\subseteq [M, BO(2)]$ consists of the classes of maps $f$ such that $f_\ast=t$ on fundamental groups, and the local coefficient (or the bundle of groups) $\mathbb{Z}^{t}=t^\ast(\mathbb{Z}^{\omega_1})$.
From the short exact sequence of bundles of groups
\[
0\rightarrow \mathbb{Z}^{t}\stackrel{2}{\rightarrow}\mathbb{Z}^{t} \rightarrow \mathbb{Z}/2\mathbb{Z}\rightarrow 0,
\]
there exists a long exact sequence of cohomology with local coefficients
\begin{equation}\label{seqW3teq}
\cdots\rightarrow H^2(M; \mathbb{Z}^{t}) \stackrel{2}{\rightarrow} H^2(M; \mathbb{Z}^{t}) 
\stackrel{\rho_2}{\rightarrow} H^2(M; \mathbb{Z}/2\mathbb{Z}) \stackrel{\beta^t}{\rightarrow} H^3(M; \mathbb{Z}^{t})\rightarrow\cdots,
\end{equation}
where $\rho_2$ is the mod-$2$ reduction to the cohomology with the untwisted coefficient $\mathbb{Z}/2\mathbb{Z}$, and $\beta^t$ is the \textit{$t$-twisted Bockstein homomorphism}.
Let us call $W_3^{t}(M):=\beta^t(\omega_2(M))$ the \textit{third $t$-twisted integral Stiefel-Whitney class}. Set $M=BO$ and $t=\omega_1$, we have the \textit{third universal $t$-twisted integral Stiefel-Whitney class} 
$$W_3^{\omega_1}:=\beta^{\omega_1}(\omega_2).$$ In particular, for $t=0$ $W_3^{0}(M)=W_3(M)$ is the usual third integral Stiefel-Whitney class.
\begin{theorem}\label{spinxiobthm}
An oriented closed manifold $M$ is spin$^{\xi}$ if and only if
\[
W_3^{t}(M)=0,
\]
for some $t\in H^1(M;\mathbb{Z}/2\mathbb{Z})$.
\end{theorem}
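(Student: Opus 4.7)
The plan is to read off the theorem directly from the twisted cohomology classification \eqref{trepeq} of maps $M\to BO(2)$ together with the exact sequence \eqref{seqW3teq}. The key identification is: being spin$^\xi$ is the same as finding a homotopy lift $f$ in \eqref{spinw2liftdia}, and such lifts are controlled, for each choice of first Stiefel--Whitney data $t\in H^1(M;\mathbb{Z}/2)$, by the twisted Bockstein $\beta^t$.

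First I would set up the necessity direction. Suppose $M$ is spin$^\xi$, realized by a rank $2$ real bundle $\xi$ over $M$. Let $f\colon M\to BO(2)$ classify $\xi$ and set $t:=w_1(\xi)=f^{\ast}\omega_1\in H^1(M;\mathbb{Z}/2)$; then by definition $f\in[M,BO(2)]_{t}$. Under the Gitler isomorphism \eqref{trepeq} the class $[f]$ corresponds to a class $\widetilde{w}_2\in H^2(M;\mathbb{Z}^t)$, and naturality of $w_2$ as a map $BO(2)\to K(\mathbb{Z}/2,2)$ (which factors the mod-$2$ reduction $\rho_2$ on twisted cohomology through this representability) gives $\rho_2(\widetilde{w}_2)=w_2(\xi)=w_2(M)$. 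Applying $\beta^t$ and using exactness in \eqref{seqW3teq} yields
\[
W_3^{t}(M)=\beta^t(w_2(M))=\beta^t\rho_2(\widetilde{w}_2)=0.
\]

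Next I would handle sufficiency. Suppose $W_3^t(M)=\beta^t(w_2(M))=0$ for some $t\in H^1(M;\mathbb{Z}/2)$. By exactness of \eqref{seqW3teq} there exists $\widetilde{w}_2\in H^2(M;\mathbb{Z}^t)$ with $\rho_2(\widetilde{w}_2)=w_2(M)$. Via \eqref{trepeq}, $\widetilde{w}_2$ corresponds to a homotopy class of maps $f\colon M\to BO(2)$ with $f_\ast=t$ on fundamental groups. Under the same naturality used above, the composition $w_2\circ f\colon M\to K(\mathbb{Z}/2,2)$ represents $\rho_2(\widetilde{w}_2)=w_2(M)$, so $f$ is a lift in the diagram \eqref{spinw2liftdia}. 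Pulling back the universal rank $2$ bundle over $BO(2)$ along $f$ produces a real rank $2$ bundle $\xi$ on $M$ with $w_2(\xi)=w_2(M)$, proving that $M$ is spin$^\xi$.

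The only substantive point that needs care is the compatibility between Gitler's identification \eqref{trepeq} and the $w_2$-map into $K(\mathbb{Z}/2,2)$: one must verify that, under \eqref{trepeq}, the post-composition $w_2\circ(-)\colon[M,BO(2)]_{t}\to H^2(M;\mathbb{Z}/2)$ coincides with the mod-$2$ reduction $\rho_2\colon H^2(M;\mathbb{Z}^t)\to H^2(M;\mathbb{Z}/2)$ appearing in \eqref{seqW3teq}. This is the main (and essentially only) obstacle; it follows from the fact that $BO(2)$ is a generalized Eilenberg--MacLane space whose universal $w_2$ is precisely the reduction of the twisted integral class $\omega_1$-twisted Euler-type class used to construct $\mathbb{Z}^{\omega_1}$, together with naturality of the twisted Bockstein under pullback by $t$. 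Once this compatibility is in hand, both implications reduce to exactness in \eqref{seqW3teq} as above.
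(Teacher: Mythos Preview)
Your proposal is correct and follows essentially the same route as the paper: use the Gitler identification \eqref{trepeq} to translate rank $2$ real bundles with prescribed $w_1=t$ into classes in $H^2(M;\mathbb{Z}^t)$, and then use exactness of \eqref{seqW3teq} to characterize when $w_2(M)$ lies in the image of $\rho_2$. The paper's argument is the same but terser, and it silently assumes the compatibility you singled out (that under \eqref{trepeq} the map $w_2\circ(-)$ agrees with $\rho_2$); your explicit remark on this point is a welcome clarification rather than a deviation.
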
  
\begin{proof}
From the exactness of the sequence (\ref{seqW3teq}), $W_3^{t}(M)=0$ is equivalent to that $\omega_2(M)=\rho_2(c^t)$ for some $c^t\in  H^2(M; \mathbb{Z}^{t})$, which by (\ref{trepeq}) is equivalent to the existence of a real vector bundle $\xi^t$ of rank $2$ such that 
$\omega_1(\xi^t)=t$ and $\omega_2(\xi^t)=\omega_2(M)$. 
\end{proof}
In particular, the theorem recovers the obstruction result for spin$^c$ structure and determines the obstruction class for spin$^{\omega_2}$ structure as well.

\begin{corollary}\label{spinw2obthm}
An oriented closed manifold $M$ is spin$^{\omega_2}$ if and only if
\[
W_3^{t}(M)=0,
\]
for some $t\in H^1(M;\mathbb{Z}/2\mathbb{Z})$ and $t\neq 0$.
\end{corollary}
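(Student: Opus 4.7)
The plan is to deduce this corollary directly from Theorem \ref{spinxiobthm} by tracking precisely how the twisting class $t \in H^1(M; \mathbb{Z}/2)$ encodes the first Stiefel-Whitney class of the rank-$2$ bundle $\xi$ produced by the lifting problem \eqref{spinw2liftdia}. Since spin$^{w_2}$ is exactly the spin$^{\xi}$ condition with the extra requirement that $\xi$ be nonorientable, the only thing I need to verify is that under the identification \eqref{trepeq}, the parameter $t$ corresponds to $w_1(\xi^t)$.

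First, I would recall the proof of Theorem \ref{spinxiobthm}: given $t\in H^1(M;\mathbb{Z}/2)$, the vanishing $W_3^t(M)=0$ in the exact sequence \eqref{seqW3teq} produces a class $c^t\in H^2(M;\mathbb{Z}^{t})$ with $\rho_2(c^t)=w_2(M)$, and via the generalized Eilenberg-MacLane isomorphism \eqref{trepeq} this class is represented by a map $f_t\colon M\to BO(2)$ whose induced map on fundamental groups is $t$, i.e.\ a real rank-$2$ bundle $\xi^t$ with $w_1(\xi^t)=f_t^\ast(\omega_1)=t$ and $w_2(\xi^t)=w_2(M)$. Conversely, any such $\xi$ with $w_2(\xi)=w_2(M)$ gives rise to the class $t:=w_1(\xi)$ and a class $c^t$ whose mod-$2$ reduction forces $W_3^t(M)=\beta^t(w_2(M))=0$.

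Next I would observe that $\xi^t$ is orientable if and only if $w_1(\xi^t)=0$, which by the above is equivalent to $t=0$. So among the bundles produced by Theorem \ref{spinxiobthm}, the nonorientable ones correspond bijectively (in terms of their first Stiefel-Whitney class) with nonzero $t$. Combining this with Definition \ref{spinw2def}, $M$ admits a nonorientable real rank-$2$ bundle $\xi$ with $w_2(\xi)=w_2(M)$ if and only if $W_3^t(M)=0$ for some $t\neq 0$ in $H^1(M;\mathbb{Z}/2)$, which is exactly the claim.

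The only nontrivial step is the naturality statement that the generalized Eilenberg-MacLane identification \eqref{trepeq} sends the homomorphism-on-$\pi_1$ part $t$ precisely to the first Stiefel-Whitney class of the classified bundle; this is built into the construction of $\mathbb{Z}^{\omega_1}$ from $\omega_1$ and Gitler's theorem, so it is more of a bookkeeping remark than a genuine obstacle. Once that is noted, the corollary follows with no further computation.
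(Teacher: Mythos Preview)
Your proof is correct and follows exactly the same approach as the paper: the corollary is stated without separate proof because it follows immediately from the proof of Theorem \ref{spinxiobthm}, where it is already established that the bundle $\xi^t$ produced from $W_3^t(M)=0$ satisfies $w_1(\xi^t)=t$, so $\xi^t$ is nonorientable precisely when $t\neq 0$. You have simply spelled out this observation in detail.
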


$$ $$


\begin{thebibliography}{10}


\bibitem {AW} L. Alvarez-Gaum\'e and E. Witten, Gravitational anomalies, {\it Nucl. Physics} B234  (1983), 269-330.

\bibitem {APS}  M. F. Atiyah, V. K. Patodi and I. M. Singer, Spectral asymmetry and Riemannian geometry I. {\em Proc. Cambridge Philos. Soc. }77 (1975), 43–69.

\bibitem {At} M. F. Atiyah, {\em $K$-theory}. Benjamin, New York, 1967.


\bibitem {AS}M. F. Atiyah and I. M. Singer, The index of elliptic operators III, {\em Ann of Math.} 87 (1968), 546-604.

\bibitem {ASV}M. F. Atiyah and I. M. Singer, The index of elliptic operators V, {\em Ann of Math.} 93 (1971), 139–149.


\bibitem{BW95} D. J. Benson and J. A. Wood, Integral invariants and cohomology of $BSpin(n)$, \textit{Topology} 34 (1995), 13-28.

\bibitem{BS58} R. Bott and H. Samelson, Applications of the theory of Morse to symmetric spaces, \textit{Amer. J. Math.} 80 (1958), 964-1029.

\bibitem{BGHZ} J. H. Bruinier, G. van der Geer, G. Harder and D. Zagier, {\it The 1-2-3 of Modular forms}. Lectures at a Summer School in Nordfjordeid, Norway. Springer, 2007.

\bibitem{C} K. Chandrasekharan, \textit{Elliptic Functions}.
Springer-Verlag, 1985.

\bibitem{CHZ} Q. Chen, F. Han and W. Zhang, Generalized Witten genus and vanishing theorems, \textit{J. Diff. Geom.} 88
(2011), 1-39.

\bibitem{Duan18} H. Duan, The characteristic classes and Weyl invariants of Spinor groups, {\em arXiv:1810.03799.}


\bibitem{FH19} D. S. Freed and M. J. Hopkins, Consistency of $M$-Theory on nonorientable manifolds, {\em  Quarterly Journal of Mathematics}, to appear,  {\em arXiv:1908.09916.}

\bibitem{GL} T. Gannon and C. S. Lam, Lattices and $\theta$-function identities. II. Theta series, {\it J. Math. Phys.} 33 (1992), 871-887.

\bibitem{Gitler63} S. Gitler, Cohomology operations with local coefficients, {\it Amer. J. Math.} 85 (1963), 156-188.


\bibitem{GS} M. B. Green and J. H. Schwarz, Anomaly cancellations in
supersymmetric d=10 gauge theory and superstring theory, {\it
Physics Letters} B149 (1984), 117-122.

\bibitem{Gri} V. Gritsenko, Complex vector bundles and Jacobi forms, {\em Automorphic forms and L-functions (Kyoto, 1999). Sūrikaisekikenkyūsho Kōkyūroku} No. 1103 (1999), 71–85.

\bibitem{HL} F. Han and K. Liu, Gravitational anomaly cancellation and modular invariance, {\it Alg. Geom. Top.} 14 (2014), 91-113.

\bibitem{HLZ} F. Han, K. Liu and W. Zhang, Modular forms and generalized anomaly cancellation formulas, \textit{J. Geom. Phys.} 62 (2012), 1038-1053.

\bibitem{HLZ1} F. Han, K. Liu and W. Zhang, Anomaly cancellation and modularity, {\it Frontiers in Differential Geometry, Partial Differential Equations and Mathematical Physics, In Memory of Gu Chaohao}. pp. 87-104, World Scientific Publishing Co., Inc. 2014.

\bibitem{HLZ2} F. Han, K. Liu and W. Zhang, Anomaly cancellation and modularity II. The $E_8\times E_8$ case, {\it Sci. China Math.} 60 (2017), no. 6, 985–994.

\bibitem{Hatcher} A. Hatcher, {\em Algebraic topology}. Cambridge University Press, Cambridge, 2002. \romannumeral12+544 pp. 

\bibitem {HBJ} F. Hirzebruch, T. Berger and R. Jung, Manifolds and Modular Forms.
Aspects of Mathematics, vol. E20, Vieweg, Braunschweig, 1992.

\bibitem{Hop} M. J. Hopkins, Algebraic topology and modular forms, {\em Plenary talk, ICM}. Beijing, 2002.


\bibitem{HW} P. Ho\v{r}ava and E. Witten, Heterotic and Type I string dynamics from eleven dimensions, {\it Nucl. Phys.} B460 (1996) 506.

\bibitem{HW1} P. Ho\v{r}ava and E. Witten, Eleven dimensional supergravity on a manifold with boundary, {\it Nucl. Phys.} B475 (1996) 94.

\bibitem{K1} V. G. Kac, An elucidation of : ``Infinite-dimensional algebras, Dedekind's
$\eta$-funtion, classical Moebius function and the very strange formula". $E_8{(1)}$ and the cube root of the modular function $j$, {\it Adv. in Math.} 35 (1980), 264-273.

\bibitem{K2} V. G. Kac, {\it Infinite-Dimensional Lie Algebras}. Cambridge University Press, Cambridge, 3rd edition, 1990.

\bibitem {LS88} P. S. Landweber and R.E. Stong, {\em Circle actions on
Spin manifolds and characteristic numbers}. Topology, 27(2),
145-161, 1988.

\bibitem {Lan} P. S. Landweber, {\it Elliptic cohomology and modular forms}. in 
Elliptic Curves and Modular Forms in Algebraic Topology,  p.
55-68. Ed. P. S. Landweber. Lecture Notes in Mathematics Vol.
1326, Springer-Verlag (1988).

\bibitem{Liu1} K. Liu, Modular invariance and characteristic numbers, {\it Comm. Math. Phys.} 174 (1995), 29-42.

\bibitem{Liu95}K. Liu, On modular invariance and rigidity theorems, {\em J. Diff. Geom.} 41 (1995), 343-396.

\bibitem{M}D. McLaughlin, Orientation and string structures on loop spaces, {\it Pacific J. Math.} 155 (1992), 143-156.

\bibitem{O87} S. Ochanine, Sur les genres multiplicatifs d\'efinis
par des int\'egrales elliptiques, {\it Topology}, 26(2), 143-151,
1987.


\bibitem{Stong86} R. E. Stong, Appendix: Calculation of $\Omega_{11}^{Spin}(K(\mathbb{Z},4))$, {\em Workshop on unified string theories (Santa Barbara, Calif., 1985)}, World Scientific, Singapore, 1986, pp. 430-437.


\bibitem{Thomas62} E. Thomas, On the cohomology groups of the classifying space for the stable spinor groups, {\it Bol. Soc. Math. Mex.} 7 (1962), 57-69.

\bibitem{Town96} P.K. Townsend, {\em Four lectures on M-theory}. hep-th/9612121.

\bibitem{W} E. Witten, The index of the Dirac operator in loop space, in
{\em P.S. Landweber, ed., Elliptic Curves and Modular Forms in Algebraic
Topology (Proceedings, Princeton 1986) Lecture Notes in Math.}
1326, pp. 161-181, Springer, 1988.

\bibitem{W95} E. Witten, String theory dynamics in various dimensions, {\em Nucl. Phys. B} 443 (1995) 85 [hep-th/9503124].


\bibitem{W97} E. Witten, On flux quantization in M-theory and the effective action, {\em J. Geom. Phys.} 22 (1997), 1-13. 



\bibitem{Za} D. Zagier, Note on the Landweber-Stong elliptic genus, in
{\em P.S. Landweber, ed., Elliptic Curves and Modular Forms in Algebraic
Topology (Proceedings, Princeton 1986), Lecture Notes in Math.}
1326, pp. 216-224, Springer, 1988.



\bibitem {Z93}W. Zhang, Spin$^c$-manifolds and Rokhlin congruences, {\em C R Acad Sci Paris, Serie I}. 317 (1993), 689-692.

\bibitem {Z94} W. Zhang, Circle bundles, adiabatic limits of $\eta$-invariants and Rokhlin congruences, {\em Ann Inst Fourier}
44 (1994), 249-270.

\bibitem {Z} W. Zhang, {\it Lectures on Chern-Weil Theory and
Witten Deformations}. Nankai Tracts in Mathematics Vol. 4, World
Scientific, Singapore, 2001.

\bibitem {Z09} W. Zhang, Cobordism and Rokhlin congruences, {\em Acta Mathematica Scientia} 29B(3) (2009), 609-612.

\bibitem {Z17} W. Zhang, A mod 2 index theorem for pin$^-$ manifolds, {\em Science China
Mathematics} 60 (2017), 1615-1632.



\end{thebibliography}
\end{document}